\documentclass[12pt, reqno]{amsart}
\usepackage{amssymb, amstext, amscd, amsmath}
\usepackage{color}


%
\makeatletter
\def\@cite#1#2{{\m@th\upshape\bfseries%
[{#1\if@tempswa{\m@th\upshape\mdseries, #2}\fi}]}} \makeatother
%

\theoremstyle{plain}
\newtheorem{theorem}{Theorem}[section]
\newtheorem{thm}[theorem]{Theorem}
\newtheorem{cor}[theorem]{Corollary}
\newtheorem{prop}[theorem]{Proposition}
\newtheorem{lem}[theorem]{Lemma}
%
\theoremstyle{definition}
\newtheorem{rem}[theorem]{Remark}

\newtheorem{defn}[theorem]{Definition}

\newtheorem{eg}[theorem]{Example}

\newtheorem{ques}[theorem]{Question}
%


\newcommand{\bA}{{\mathbb{A}}}
\newcommand{\bB}{{\mathbb{B}}}
\newcommand{\bC}{{\mathbb{C}}}
\newcommand{\bD}{{\mathbb{D}}}

\newcommand{\bF}{{\mathbb{F}}}

\newcommand{\bN}{{\mathbb{N}}}

\newcommand{\bT}{{\mathbb{T}}}


  \newcommand{\A}{{\mathcal{A}}}
  \newcommand{\B}{{\mathcal{B}}}
  \newcommand{\C}{{\mathcal{C}}}

  \newcommand{\F}{{\mathcal{F}}}
  
\renewcommand{\H}{{\mathcal{H}}}
  
  \newcommand{\J}{{\mathcal{J}}}
  \newcommand{\K}{{\mathcal{K}}}
\renewcommand{\L}{{\mathcal{L}}}
\newcommand{\M}{{\mathcal{M}}}
  \newcommand{\N}{{\mathcal{N}}}

\renewcommand{\S}{{\mathcal{S}}}
  \newcommand{\T}{{\mathcal{T}}}
  
  \newcommand{\V}{{\mathcal{V}}}





\newcommand{\upchi}{{\raise.35ex\hbox{\ensuremath{\chi}}}}



\newcommand{\alg}{\operatorname{Alg}}

\newcommand{\id}{{\operatorname{id}}}

\newcommand{\spn}{\operatorname{span}}

\newcommand{\Tr}{\operatorname{Tr}}


\newcommand{\Fd}{\mathbb{F}_d^*}

\newcommand{\mt}{\varnothing}

\newcommand{\ol}{\overline}

\newcommand{\wot}{\textsc{wot}}

\newcommand{\otl}{\, \ol\otimes\, }


\begin{document}
\title[Hopf structure of dual algebras]{The Hopf structure of some \\ dual operator algebras}
\author[M. Kennedy]{Matthew Kennedy}
\address{
Matthew Kennedy, School of Mathematics and Statistics, Carleton University, 1125 Colonel
By Drive, Ottawa, Ontario K1S 5B6, Canada}
\email{mkennedy@math.carleton.ca}
\author[D. Yang]{Dilian Yang}
\address{
Dilian Yang, Department of Mathematics $\&$ Statistics, University of Windsor, Windsor, ON
N9B 3P4, CANADA}
\email{dyang@uwindsor.ca}

\begin{abstract}
We study the Hopf structure of a class of dual operator algebras corresponding to certain semigroups. This class of algebras arises in dilation theory, and includes the noncommutative analytic Toeplitz algebra and the multiplier algebra of the Drury-Arveson space, which correspond to the free semigroup and the free commutative semigroup respectively.  The preduals of the algebras in this class naturally form Hopf (convolution) algebras. The original algebras and their preduals form (non-self-adjoint) dual Hopf algebras in the sense of Effros and Ruan.
We study these algebras from this perspective, and obtain a number of results about their structure.
\end{abstract}

\subjclass[2000]{47L25, 47L45, 47L75, 16W30}
\keywords{dual algebra, Hopf algebra, free semigroup algebra, Drury-Arveson space, multiplier algebra}
\thanks{Both authors partially supported by NSERC}

\date{}
\maketitle

In this paper, we study a class of dual operator algebras with the property that the algebras and their preduals form compatible Hopf algebras in the sense of Effros and Ruan \cite{ER1} (see also Rieffel's paper \cite{Rie}). This class of algebras arises in multivariate operator theory, and includes the noncommutative analytic Toeplitz algebra and the multiplier algebra of the Drury-Arveson space.

A dual operator algebra $\A$ is a unital algebra of bounded linear operators on a Hilbert space $\H$ that is closed in the weak* topology. Such an algebra arises in a canonical way as the dual of a quotient of the algebra of trace class operators on $\H$. Namely,
\[
\A = (S_1(\H)/\A_\perp)^*,
\]
where $S_1(\H)$ denotes the algebra of trace class operators on $\H$, and $\A_\perp$ denotes the preannihilator of  
$\A$, that is
\[
\A_\perp = \{T \in S_1(\H) \mid \Tr(TA) = 0 \text{ for all } A \in \A\}.
\]
We will write $\A_* = S_1(\H)/\A_\perp$, and we will refer to this space as the (standard) predual of $\A$.

We also mention Le Merdy's work in \cite{Lem99} which provides an abstract characterization of dual operator algebras, and can be seen as the weak* analogue of the Blecher-Ruan-Sinclair theorem. Since the algebras we study have natural concrete representations, we do not require these abstract results.

Analyzing the structure of the predual of a dual operator algebra can reveal a great deal about the algebra itself (see for example \cite{Ber10, BFP, BL05, Ken11, Ken12}). However, the above presentation of the predual is often not very revealing. It is of interest, therefore, to identify when the predual is endowed with some kind of additional structure.

The noncommutative analytic Toeplitz algebra $\L_d$ is the unital weakly closed operator algebra generated by the left regular representation of the free semigroup $\Fd$ on the full Fock space $\ell^2(\Fd)$. The algebra $\L_d$ was first studied by Popescu in \cite{Pop95}, but it has recently received a great deal of attention due to its prominent role in multivariate operator theory. For more information on this work, we direct the reader to \cite{DavYang, Ken11, Ken12, Pop89, Pop95, Pop96}, and the references contained therein.

In \cite{Yang12}, the second author studied the Hopf structure of $\L_d$ and its predual $(\L_d)_*$. It was shown there that $(\L_d)_*$ is a commutative Hopf algebra (and in particular, a commutative Banach algebra) with spectrum $\Fd$. The algebra $\L_d$ also forms a Hopf algebra, and the comultiplication on $\L_d$ arises as the dual of the multiplication on $(\L_d)_*$. Moreover, there is one-to-one correspondence between the set of corepresentations of $\L_d$, and the completely bounded representations of $(\L_d)_*$.

The algebras of interest in this paper arise as quotients of $\L_d$. These quotients provide a large class of interesting algebras, which includes the multiplier algebra of every irreducible complete Nevanlinna-Pick space, and in particular, the Drury-Arveson space ( see \cite{AM00, AM02, DavPit98}). More recently, these quotient algebras were studied in \cite{KenYan13}. There, a non-self-adjoint analogue of the classical Lebesgue decomposition was obtained and used to prove that these algebras have strongly unique preduals.

We view the noncommutative analytic Toeplitz algebra $\L_d$ as the analogue, for the free semigroup $\Fd$, of a group von Neumann algebra. From this perspective, the class of algebras we are interested in correspond to quotient semigroups of $\mathbb{F}_d^*$ arising from semigroup congruences (for technical reasons we will need to adjoin a zero element to $\Fd$) . For example, we will show that the multiplier algebra of the Drury-Arveson space in $d$ variables corresponds to the free commutative semigroup $\mathbb{N}^d$.

The paper is organized as follows. In Section \ref{S:pre}, we provide the background material we will require, and in particular introduce Effros and Ruan's notion of a Hopf algebra. In Section \ref{S:ccap}, we show that certain quotients of $\L_d$ possess approximation properties that will play an important role in our later results.

In Section \ref{S:hhideal}, we study homogeneous Hopf ideals of $\L_d$, which are generated by homogeneous polynomials in the generators of $\L_d$. We prove a correspondence between these ideals and quotient semigroups of $\mathbb{F}_d^*$ arising from certain semigroup congruences. Let $\J$ be a homogeneous Hopf ideal. We give a functorial construction of a weighted left regular representation of the corresponding semigroup, and show this representation extends to a completely isometric representation of $\L_d/\J$. Moreover, we show this representation is unitarily equivalent to a compression of the algebra $\L_d$ to a coinvariant subspace. Being able to identify $\L_d/\J$ with its image in this representation will be very useful in our work.

In Section \ref{S:LJ}, we prove that for an arbitrary Hopf ideal $\J$, the quotient algebra $\L_d/\J$ is a Hopf algebra. If, furthermore, $\J$ is homogeneous, then we prove that the 
predual of $\L_d/\J$ is a commutative Hopf (convolution) algebra. In this case, the predual of $\L_d/\J$ is a commutative Banach algebra. In Section \ref{S:spec},we prove the spectrum of such an algebra consists of the nonzero elements of the corresponding semigroup. We also investigate the case when $\J$ is non-homogeneous. 

In Section \ref{S:aut}, for a homogeneous Hopf ideal $\J$ with corresponding semigroup $S$, we completely characterize the Hopf algebra automorphisms of $\L_d/\J$ and its predual. We prove that, roughly speaking, they are completely determined by the semigroup automorphisms of $S$. Finally, in Section \ref{S:mul}, we briefly discuss the Schur multipliers of the predual of $\L_d$.

\section{Preliminaries}
\label{S:pre}

In this section, we provide some of the background which will be needed later, and in addition fix our notation. 
The main references for this material are \cite{AP1, Dav2, DavPit, DavPit98, DavPit99, DavRamSha2, Yang12}.

\subsection{Free semigroup algebras}

Let $\Fd$ be the unital free semigroup on $d$ generators $\{1,...,d\}$, and let $\ell^2(\Fd)$ 
be the Fock space with orthonormal basis $\{\xi_w: w\in \Fd\}$. The free semigroup algebra
$\L_d$ is the unital $\wot$-closed algebra generated by the left regular representation of $\Fd$ on $\ell^2(\Fd)$. In other words,
$\L_d$ is generated by the operators $L_u(\xi_w)=\xi_{uw}$ for all $u,w\in \Fd$. 

As shown in \cite{DavPit99}, every element $A$ in $\L_d$ is uniquely determined by its Fourier series
\[
A\sim\sum_{w\in\Fd}a_wL_w,
\]
in the sense that 
\[
A\xi_w=\sum_{w\in\Fd}a_w\xi_w\quad  (w\in\Fd),
\]
where the $w$-th Fourier coefficient of $A$ is given by $a_w=(A\xi_\mt,\xi_w)$.
It is often useful heuristically to work directly with the Fourier series of $A$.

For $k\ge 1$, the $k$-th Ces\`aro map of the Fourier series of $A$ is defined by 
\[
\Sigma_k(A)=\sum_{|w|<k}\Big(1-\frac{|w|}{k}\Big)a_wL_w,
\]
where $|w|$ is the length of the word $w$ in $\Fd$. 
The sequence $\{\Sigma_k(A)\}_k$  converges to $A$ in the strong operator topology (see for example \cite{DavPit99}).

Let $\J$ be a $\wot$-closed ideal of $\L_d$. We are interested in the quotient algebra $\L_d/\J$. To facilitate the study of this algebra, we will realize it as the compression of the algebra $\L_d$ to a coinvariant subspace. The orthogonal complement of the range of $\J$ is the space $\N_\J = (\J\xi_\varnothing)^\perp$. Note that $\N_\J$ is a coinvariant subspace for $\J$. Let $P_{\N_\J}$ be the orthogonal projection onto  $\N_\J$. The compression algebra $\L_\J$ is defined as 
\[\L_\J=P_{\N_\J}\L_d|_{\N_\J}.
\] 
It turns out that 
$\L_d/{\J}$ and $P_{\N_\J}\L_d|_{\N_\J}$ are weak*-homeomorphic and completely isometrically isomorphic \cite{AP1,DavPit98}. 

The algebra $\L_\J$ enjoys many nice properties. For example, it is known that the weak* and $\wot$ topologies on $\L_\J$ coincide (\cite[Proposition 4.3]{AP1} and \cite[Theorem 2.1]{DavPit98}).  We will make frequent use of this fact. More recently, we studied these quotient algebras in \cite{KenYan13}, and showed that they have strongly unique preduals.

Let $\bB_d$ denote the open unit ball of $\bC^d$, and for $\lambda\in\bB_d$ let
\[
\nu_\lambda=(1-\|\lambda\|^2)^\frac{1}{2}\sum_{w\in\Fd}\ol{w(\lambda)}\xi_w. 
\]
Then every $\wot$-continuous multiplicative linear functional on $\L_d$ 
can be represented by a rank one linear functional
of the form $\varphi_\lambda=[\nu_\lambda\nu_\lambda^*]$,  where
\[
\varphi_\lambda(A)=(A\nu_\lambda, \nu_\lambda)\quad (A\in \L_d). 
\]
This fact is proved in \cite{DavPit}.

\subsection{The commutative case}


Let $\C$ be the $\wot$-closed commutator ideal of $\L_d$ generated by $\{L_iL_j-L_jL_i:i,j=1,\ldots, d\}$.
Then 
\[
\C=\cap_{\lambda\in\bB_d}\ker \varphi_\lambda,
\]
and $\N_\C$ is the symmetric Fock subspace of $\ell^2(\Fd)$, i.e. the \textit{Drury-Arveson space}, which we denote by $\H_d^2$ (see for example \cite{Arv98}). 
It is known that $\H_d^2$ is a reproducing kernel space of functions on $\bB_d$
with kernel functions 
\[
k_\lambda=(1-\|\lambda\|^2)^{-\frac{1}{2}}\, \nu_\lambda, \quad \lambda\in\bB_d.
\] 

The multiplier algebra of $\H_d^2$ is the algebra $\M_d$. Thus, the coordinate functions $Z_1,\ldots,Z_d$ on $\H_d^2$ are given by the 
compressions $Z_i=P_{\H_d^2}L_i|_{\H_d^2}$, $i=1,...,d$. That is,
\[
Z_i(f)(z) = (z_if)(z) \quad \text{for all}\quad  f\in\H_d^2.
\]
For $k = (k_1,...,k_d) \in \mathbb{N}^d$,
we use standard multi-index notation and write
$Z^k = Z_1^{k_1} ... Z_d^{k_d}$ and $z^k = z_1^{k_1} ... z_d^{k_d}$. The $z^k$'s form an orthogonal (but not orthonormal unless $d = 1$) basis for $\H_d^2$, since
\[
(z^k, z^\ell)=\frac{k!}{|k|!}  \delta_{k\ell}\quad k,\ell\in\bN^d.
\]
For more information, see, for example, \cite{Arv98, DavPit, DavPit98}.

More generally, let $V$ be an analytic variety of $\bB_d$, i.e., the common set of zeros of a family of functions in $\H_d^2$. Let 
\[
\J_V=\{f\in\M_d: f(\lambda)=0 \  \forall \lambda\in V\},
\]
and let
\[
\F_V=\ol\spn\{k_\lambda:\lambda\in V\}.
\]
Then $\J_V$ is a $\wot$-closed ideal in $\M_d$, and $\F_V$ is a reproducing kernel Hilbert space of functions on $V$. 
Let $\M_V$ be the multiplier algebra of $\F_V$. It is proved in \cite{DavRamSha2} that 
 \[
 \M_V=\{f|_V:f\in\M_d\},
 \]
 and that, moreover, the mapping 
 \[
 \phi:\M_d\to \M_V, \quad \phi(f)=f|_V
 \] 
is a weak* homeomorphism and a completely isometric isomorphism between $\M_d/\J_V$ and $\M_V$. Furthermore,  $\M_V=P_{\F_V}\M_d|_{\F_V}$.

\subsection{Hopf dual algebras}
A dual operator algebra $\A$ is said to be a \textit{Hopf dual algebra} if there is a unital
weak*-continuous completely contractive
homomorphism $\Delta_\A:\A\to \A\ol\otimes \A$
that is coassociative, meaning
\[
(\id\otimes \Delta_\A)\Delta_\A=(\Delta_\A\otimes \id)\Delta_\A.
\]
By $\A\ol\otimes\A$, we mean the normal tensor product of $\A$ with itself. 
The homomorphism $\Delta_\A$ is called a \textit{comultiplication} or a \textit{coproduct} on $\A$.
We will use write $(\A, \Delta_\A)$ to denote the pairing of $\A$ with a comultiplication $\Delta_\A$. Occasionally, we will drop the subscript and write $(\A, \Delta_\A)$, or even just $\A$ if the context is clear.

Let $(\A, \Delta_\A)$ be a Hopf dual algebra. A weak*-closed subspace $\J$ of $\A$ is called a \textit{coideal} if 
\[
\Delta_\A(\J)\subseteq \ol{\J\otimes\A+\A\otimes\J}^{w^*}.
\]
If $\J$ is both an ideal and coideal of $(\A, \Delta_\A)$, $\J$ is said to be a \textit{Hopf ideal}.

We follow the lead of Effros and Ruan in \cite{ER1} (see also \cite{Rie}), and use the following (analysis-centric) notion of a Hopf algebra.
A \textit{Hopf algebra} $(\A,  m, \Delta)$ consists of a linear space $\A$ with norms or matrix norms, an associative bilinear multiplication $m: \A\times \A\to \A$,
and a comultiplication $\Delta_\A: \A\to \A\widetilde{\otimes} \A$. Here, $\widetilde\otimes$ is a suitable tensor product. The maps $m$ and $\Delta$ are both assumed
to be bounded in an appropriate sense.

In \cite{Yang12}, it was shown that $\L_d$ is a Hopf dual algebra with comultiplication $\Delta:\L_d \to \L_d \overline{\otimes} \L_d$ defined by
\[
\Delta(L_w) = L_w\otimes L_w,\quad w\in\Fd.
\]
Furthermore, it was shown that the predual $({\L_d})_*$ of $\L_d$ is also a Hopf algebra which, in analogy with the classical notion of a Hopf algebra, was called a \textit{Hopf convolution algebra}. 
In particular,  the algebra structure on $({\L_d})_*$ is induced by the comultiplication of $\L_d$. That is, for all $\varphi, \psi\in ({\L_d})_*$, 
\[
\varphi \star \psi (A) = (\varphi\otimes \psi) \circ \Delta(A),\quad A\in\L_d.
\]


\subsection{Notation} 

Since $d$ is fixed throughout the paper, for simplicity we often write $\L$, $\M$, $\L_*$ and $\M_*$ for 
$\L_d$, $\M_d$, $({\L_d})_*$ and $({\M_d})_*$, respectively. To avoid confusion, we use the notation $\star$ to denote 
the multiplication on the predual ${\L}_*$ (and similarly on the predual of the algebras under investigation). We reserve the use of the asterisk $*$ to denote the dual or predual of a space, and the adjoint of an an operator or map).

For the rest of the paper, by an ideal of $\L$ or $\M$ we will always mean a $\wot$-closed (and hence weak*-closed from above) two-sided ideal. 


\section{The completely contractive approximate property}
\label{S:ccap} 

In this section, we show that for every homogeneous ideal $\J$ of $\L$, the compression
algebra $\L_\J$ has the completely contractive approximation property (CCAP), and hence has property $S_\sigma$ from \cite{Kraus1, Kraus2}. These facts will play an important role in our results.

It was shown in \cite{Bar} that multiplier algebras of circular reproducing kernel Hilbert spaces have the CCAP. In our context, an analytic variety $V$ is circular if and only if $e^{it}V \subseteq V$ for every $t \in \bT$. If $\J$ is a homogeneous ideal of $\L$ containing the commutator ideal $\C$, then the algebra $\L_\J$ is the multiplier algebra of a circular reproducing kernel Hilbert space.

\begin{defn}
Let $\S$ be a weak*-closed subspace of $B(\H)$. Then $\S$ is
said to have the \textit{completely contractive approximation property} (CCAP) if there is a net of completely
contractive weak*-continuous finite rank maps 
$\Phi_i: \S\to\S$ such that
\[
w^*\!\!-\!\!\lim_i\Phi_i(A)=A, \quad A\in\S.
\]
The space $S$ is said to have the \textit{completely bounded approximation property} (CBAP) if the maps $\Phi_i$ are instead only completely bounded.
\end{defn}

\begin{defn}
Let $\S$ be a weak*-closed subspace of $B(\H)$. Then $\S$ is said to have \textit{property $S_\sigma$} if
\[
\S\otimes_\F \T=\S\ol\otimes \T,
\]
whenever $\T$ is a weak*-closed subspace of $B(\K)$ for some Hilbert space $\K$. 
Here, $\S\otimes_\F\T$ stand for the Fubini tensor product of $\S$ and $\T$ (see for example \cite{Kraus1}).
\end{defn}

It is known that the CBAP (and hence the CCAP) implies property $S_\sigma$, but the converse is not true \cite{Kraus1, Kraus2}. 
The space $\S$ has property $S_\sigma$ if and only if it has the weak* operator approximation property, which happens if and only if the predual  
$\S_*$ of $\S$ has the operator approximation property \cite{KL}.

\begin{thm}\rm{(Kraus)}
\label{T:Kraus}
Let $\S$ be a weak*-closed subspace of $B(\H)$. If $\S$ has the CBAP, then $\S$ has property $S_\sigma$.
\end{thm}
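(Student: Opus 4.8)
\emph{Proof (sketch).} This is a theorem of Kraus \cite{Kraus1, Kraus2}; I would give the standard slice-map argument. Recall the two tensor products in play. The normal tensor product $\S \ol\otimes \T$ is the weak*-closure inside $B(\H) \ol\otimes B(\K)$ of the algebraic tensor product $\S \odot \T$, while the Fubini tensor product is
\[
\S \otimes_\F \T = \{ u \in B(\H)\ol\otimes B(\K) : (\rho\otimes\id)(u) \in \T \text{ and } (\id\otimes\sigma)(u)\in \S \text{ for all } \rho\in B(\H)_*,\ \sigma \in B(\K)_*\}.
\]
The inclusion $\S\ol\otimes\T \subseteq \S\otimes_\F\T$ is automatic, so the content of the theorem is the reverse inclusion under the CBAP hypothesis.

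Fix a net $\Phi_i : \S\to\S$ witnessing the CBAP: completely bounded with $\sup_i\|\Phi_i\|_{cb} = C < \infty$, weak*-continuous, of finite rank, and converging to $\id_\S$ in the point-weak* topology (that is, $\Phi_i(A)\to A$ weak* for each $A\in\S$). Since $\Phi_i$ is finite rank and weak*-continuous, I can write $\Phi_i(A) = \sum_{k=1}^{n_i} \omega_{i,k}(A)\, S_{i,k}$ with $S_{i,k}\in\S$ and each $\omega_{i,k}\in\S_* = B(\H)_*/\S_\perp$; choosing a normal lift, I regard $\omega_{i,k}$ as a normal functional on $B(\H)$. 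Then $\Phi_i\otimes\id$ is unambiguously defined on all of $B(\H)\ol\otimes B(\K)$ by $(\Phi_i\otimes\id)(u) = \sum_k S_{i,k}\otimes(\omega_{i,k}\otimes\id)(u)$, and it extends to a weak*-continuous completely bounded map of cb-norm at most $C$ (stability of the cb-norm under tensoring with the identity map of the injective von Neumann algebra $B(\K)$).

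Now take $u\in\S\otimes_\F\T$. By the Fubini condition, $(\omega_{i,k}\otimes\id)(u)\in\T$ for every $i,k$, so $(\Phi_i\otimes\id)(u)\in\S\odot\T \subseteq \S\ol\otimes\T$. It remains to show $(\Phi_i\otimes\id)(u)\to u$ weak*. The elements $(\Phi_i\otimes\id)(u)$ are norm-bounded by $C\|u\|$, so it suffices to test convergence against a norm-dense subset of the predual $(B(\H)\ol\otimes B(\K))_* = B(\H)_* \oth B(\K)_*$, for which the elementary tensors $\rho\otimes\sigma$ suffice. Writing $v_\sigma := (\id\otimes\sigma)(u)\in\S$, a slice computation gives $\langle(\Phi_i\otimes\id)(u),\rho\otimes\sigma\rangle = \langle\Phi_i(v_\sigma),\rho\rangle \to \langle v_\sigma,\rho\rangle = \langle u,\rho\otimes\sigma\rangle$, using the point-weak* convergence of $\Phi_i$. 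Hence $u\in\ol{\S\odot\T}^{\,w^*} = \S\ol\otimes\T$, and therefore $\S\otimes_\F\T = \S\ol\otimes\T$.

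The main point requiring care is the second step: that a finite-rank weak*-continuous completely bounded map on $\S$ genuinely extends, via normal lifts of its coefficient functionals, to a weak*-continuous completely bounded map $\Phi_i\otimes\id$ on the full normal tensor product with no growth in cb-norm — this is what puts the approximants inside $\S\odot\T$ and supplies the uniform bound needed for the weak* limit. The remaining manipulations (slice maps, density of elementary tensors in the operator space projective tensor product predual) are routine.
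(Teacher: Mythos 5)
The paper does not prove this statement at all: it is quoted from Kraus \cite{Kraus1, Kraus2} (in the application, Theorem \ref{T:ccap}, the maps are even completely contractive, so $C=1$). Your reconstruction is the standard slice-map argument and is essentially correct: the inclusion $\S\ol\otimes\T\subseteq\S\otimes_\F\T$ is automatic; the Fubini condition puts each slice $(\omega_{i,k}\otimes\id)(u)$ in $\T$, so $(\Phi_i\otimes\id)(u)\in\S\odot\T$; and testing a norm-bounded net against elementary tensors, which are total in the predual of $B(\H\otimes\K)$, yields the weak* convergence. Two points deserve to be made explicit. First, you correctly flag the uniform bound $\|(\Phi_i\otimes\id)(u)\|\le C\|u\|$ as the crux; note that on $\S\otimes_\F B(\K)$ the map is independent of the chosen normal lifts, because for $u\in\S\otimes_\F B(\K)$ and $\rho\in B(\H)_*$ one has $\langle(\rho\otimes\id)(u),\sigma\rangle=\rho\bigl((\id\otimes\sigma)(u)\bigr)$ with $(\id\otimes\sigma)(u)\in\S$, so the slice depends only on $\rho|_\S$. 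The bound itself then follows either from Kraus's lemma on normal extensions of slice maps, or directly by compressing: for a finite-dimensional subspace $\K_0\subseteq\K$ with projection $P=1\otimes P_{\K_0}$, one checks $P\,(\Phi_i\otimes\id)(u)\,P=(\Phi_i\otimes\id_{M_n})(PuP)$ with $PuP\in M_n(\S)$, so this compression has norm at most $\|\Phi_i\|_{cb}\|u\|$, and the supremum over $\K_0$ gives the bound. Second, your argument uses $\sup_i\|\Phi_i\|_{cb}<\infty$, which is the standard reading of the CBAP though the paper's Definition 2.1 does not state it explicitly; without a uniform bound one would instead have to invoke the equivalence of property $S_\sigma$ with the weak* operator approximation property from \cite{KL}. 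With these caveats supplied, the proof is complete.
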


\begin{thm}
\label{T:ccap}
Let $\J$ be a homogeneous ideal of $\L$. Then
the compression algebra $\L_\J$ has the CCAP, and so has property $S_\sigma$.
\end{thm}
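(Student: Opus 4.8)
The plan is to exhibit the approximating maps explicitly using the Ces\`aro averaging already available on $\L$. Since $\L_\J$ is (completely isometrically and weak*-homeomorphically) the compression $P_{\N_\J}\L|_{\N_\J}$, and since $\J$ is homogeneous, the subspace $\N_\J = (\J\xi_\mt)^\perp$ is graded: it decomposes as an orthogonal direct sum $\bigoplus_k \N_\J^{(k)}$ where $\N_\J^{(k)} = \N_\J \cap \spn\{\xi_w : |w| = k\}$. I would first check this grading carefully, using that $\J$ is generated by homogeneous polynomials in the $L_i$'s, so that $\J\xi_\mt$ is a graded subspace of $\ell^2(\Fd)$ and hence so is its orthogonal complement.

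Next I would transport the Ces\`aro maps $\Sigma_k$ to the quotient. For $A \in \L$, $\Sigma_k(A) = \sum_{|w|<k}(1 - |w|/k)\, a_w L_w$ is a finite-rank (in the sense of finitely many Fourier terms) completely contractive map, and $\Sigma_k(A) \to A$ in the strong operator topology, hence weak*. Because $\J$ is homogeneous, $\Sigma_k(\J) \subseteq \J$, so $\Sigma_k$ descends to a well-defined map $\td\Sigma_k$ on $\L/\J \cong \L_\J$; equivalently, under the compression picture, $\td\Sigma_k(P_{\N_\J}A|_{\N_\J}) = P_{\N_\J}\Sigma_k(A)|_{\N_\J}$, which is well-defined precisely because the grading of $\N_\J$ is respected. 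Each $\td\Sigma_k$ is completely contractive (it is a compression of a completely contractive map, or a quotient of one) and weak*-continuous. The strong (hence weak*) convergence $\Sigma_k(A) \to A$ passes to the quotient, giving $\td\Sigma_k(B) \to B$ weak* for all $B \in \L_\J$.

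The remaining point is finite rank. Each $\td\Sigma_k$ has range inside the span of the images of $L_w$ with $|w| < k$ in $\L_\J$, which is finite-dimensional, so $\td\Sigma_k$ is automatically finite rank and in particular weak*-continuous finite rank. This gives the net (indexed by $k \in \bN$) witnessing the CCAP. Finally, property $S_\sigma$ follows immediately from Theorem \ref{T:Kraus}, since CCAP implies CBAP.

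The main obstacle I expect is the bookkeeping around homogeneity: verifying cleanly that $\J$ homogeneous forces $\J\xi_\mt$ (and thus $\N_\J$) to be graded, and that consequently $\Sigma_k$ preserves $\J$ and the induced maps on $\L_\J$ are well-defined and still completely contractive. One must be a little careful that "homogeneous ideal" here means generated by homogeneous noncommutative polynomials, and that the $\wot$-closure of such an ideal remains graded in the appropriate sense (the Fourier coefficients of degree $<$ some threshold of an element of $\J$ again give an element of $\J$); once that grading is in hand, everything else is routine.
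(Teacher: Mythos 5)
Your proposal is correct and follows essentially the same route as the paper: the paper also uses the grading $\N_\J=\bigoplus_k\V_k$ coming from homogeneity, defines finite-rank completely contractive Ces\`aro maps $\widehat\Sigma_k$ on $\L_\J$ satisfying $\widehat\Sigma_k\circ\pi=\pi\circ\Sigma_k$ for the compression map $\pi$, and deduces weak* convergence from the corresponding property of $\Sigma_k$ on $\L$. The only cosmetic difference is that the paper defines $\widehat\Sigma_k$ intrinsically as a sum of block compressions $\sum_j P_{\V_{j+k}}(\cdot)P_{\V_j}$ and then verifies the intertwining relation, whereas you define the maps by descending $\Sigma_k$ through the quotient; the homogeneity bookkeeping you flag is exactly what makes both versions work.
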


\begin{proof} 
Let $\V =\N_\J$, so that
$\L_\J =P_\V \L|_{\V}$. Since $\J$ is homogeneous, we can write
\[
\V=\V_0\oplus \V_1\oplus \cdots 
\]
where for $k\ge 0$,
\[
\V_k\subseteq \F_k:=\spn\{\xi_w: w\in \Fd, |w|=k\}.
\]
Let $\Phi_k$ denote the completely contractive map from $\L_\J$ to $B(\V)$ given by
\[
\Phi_k(A)=\sum_{j\ge 0}P_{\V_{j+k}}A P_{\V_j}, \quad A\in \L_\J,
\]
and define the Ces\`aro map from $\L_\J$ to $B(\V)$ by
\[
\widehat\Sigma_k(A)=\sum_{j=0}^k \Big(1-\frac{j}{k} \Big)\Phi_j(A), \quad A\in \L_\J. 
\]
Let $\pi$ be the natural compression map from $\L$ to $\L_\J$. Then $\pi$ is completely contractive and 
weak*-weak* continuous. Since the projection $P_\V$ leaves each $\V_k$ invariant, we have 
\begin{align}
\label{E:cesaro}
\widehat\Sigma_k(\pi(A))=\pi(\Sigma_k(A)), \quad A\in\L,
\end{align}
where $\Sigma_k$ is the Ces\`aro map on $\L$ from the preliminaries. Hence each $\widehat\Sigma_k$ is a unital,
finite rank, completely contractive map from $\L_\J$ to $\L_\J$.
It follows from the corresponding properties of the map $\Sigma_k$ (see for example \cite{DavPit99}) that 
\begin{align}
\label{E:wlim}
w^*\!\!-\!\!\lim_k \widehat\Sigma_k(A)=A, \quad A\in\L_\J.
\end{align}
Therefore, $\L_\J$ has the CCAP. 
\end{proof}


Two immediate consequences of the above theorem are recorded below for later reference.
A proof of the first result can be seen in \cite{Ruan}.

\begin{cor}
\label{C:Fubini}
Let $\J$ be a homogeneous ideal of $\L$.  Then we have the following completely isometric
isomorphism
\[
(\L_\J\ol\otimes\T)_* \cong (\L_\J)_*\widehat\otimes \T_*,
\]
whenever $\T$ is a weak*-closed subspace of $B(\K)$ for any Hilbert space $\K$. 
Here $(\L_\J)_*\widehat\otimes \T_*$ denotes the operator projective tensor product of  $(\L_\J)_*$ and $\T_*$. 
\end{cor}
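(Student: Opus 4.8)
The plan is to derive Corollary~\ref{C:Fubini} as a direct consequence of Theorem~\ref{T:ccap} together with the standard duality theory relating the Fubini tensor product to the operator projective tensor product. First I would recall that for any weak*-closed subspaces $\S \subseteq B(\H)$ and $\T \subseteq B(\K)$, the Fubini tensor product $\S \otimes_\F \T$ is, by definition, a weak*-closed subspace of $B(\H \otimes \K)$, and its predual is canonically completely isometrically isomorphic to the operator projective tensor product $\S_* \widehat\otimes \T_*$. This is the operator-space analogue of the classical identity $(X \overline{\otimes}^{\sigma} Y)_* \cong X_* \widehat\otimes Y_*$ and is proved, for instance, in \cite{Ruan} (and underlies the Kraus theory in \cite{Kraus1, Kraus2}); I would simply cite it.

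The second ingredient is Theorem~\ref{T:ccap}, which says that for a homogeneous ideal $\J$ the compression algebra $\L_\J$ has property $S_\sigma$. By definition of property $S_\sigma$, this means precisely that
\[
\L_\J \otimes_\F \T = \L_\J \ol\otimes \T
\]
for every weak*-closed subspace $\T \subseteq B(\K)$. Combining this equality with the predual identification from the previous paragraph yields
\[
(\L_\J \ol\otimes \T)_* = (\L_\J \otimes_\F \T)_* \cong (\L_\J)_* \widehat\otimes \T_*,
\]
which is exactly the claimed completely isometric isomorphism.

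I expect the proof to be essentially a two-line chaining of definitions and a citation, so there is no serious obstacle; the real content has already been isolated in Theorem~\ref{T:ccap}. The only point requiring mild care is to make sure the statement of the predual-of-Fubini-tensor-product identity is invoked in the correct generality — namely for an arbitrary weak*-closed subspace $\T$ rather than just for a dual operator algebra — which is indeed the form available in \cite{Ruan, Kraus1}. One should also note that the identification is natural, so that it is compatible with the weak*-topologies and completely isometric as stated, but this is part of the cited result rather than something to be checked here.
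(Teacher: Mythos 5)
Your argument is correct and is exactly the route the paper takes: the paper records this corollary as an immediate consequence of Theorem~\ref{T:ccap} (property $S_\sigma$ gives $\L_\J\otimes_\F\T=\L_\J\ol\otimes\T$) combined with the predual identification of the Fubini tensor product with the operator projective tensor product of the preduals, citing \cite{Ruan}. Nothing further is needed.
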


\begin{cor}
$\L$ and $\M$ both have the CCAP and so property $S_\sigma$. 
\end{cor}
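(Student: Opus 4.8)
The plan is to recognize that both $\L = \L_d$ and $\M = \M_d$ are themselves compression algebras $\L_\J$ associated to particular homogeneous ideals $\J$ of $\L_d$, and then simply invoke Theorem~\ref{T:ccap}. First I would observe that $\L_d = \L_\J$ for the zero ideal $\J = \{0\}$: here $\N_\J = (\{0\}\xi_\varnothing)^\perp = \ell^2(\Fd)$, so $\L_\J = P_{\ell^2(\Fd)}\L_d|_{\ell^2(\Fd)} = \L_d$, and the zero ideal is trivially homogeneous (it is spanned by the empty set of homogeneous polynomials, with the grading $\ell^2(\Fd) = \bigoplus_k \F_k$ respected). Hence Theorem~\ref{T:ccap} applies directly and gives that $\L$ has the CCAP, and therefore property $S_\sigma$.

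Next I would handle $\M = \M_d$, the multiplier algebra of the Drury--Arveson space. As recalled in the preliminaries, $\M_d$ is completely isometrically isomorphic and weak*-homeomorphic to the compression algebra $\L_\C = P_{\N_\C}\L_d|_{\N_\C}$, where $\C$ is the $\wot$-closed commutator ideal generated by $\{L_iL_j - L_jL_i : i,j = 1,\ldots,d\}$ and $\N_\C = \H_d^2$ is the symmetric Fock space. The key point is that $\C$ is a \emph{homogeneous} ideal: it is generated by the degree-two homogeneous polynomials $L_iL_j - L_jL_i$ in the generators of $\L_d$, so the grading $\ell^2(\Fd) = \bigoplus_k \F_k$ restricts to a grading $\N_\C = \bigoplus_k (\N_\C \cap \F_k)$ with $\N_\C \cap \F_k$ the symmetric part of $\F_k$. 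Therefore Theorem~\ref{T:ccap} applies to $\J = \C$, showing $\L_\C$ has the CCAP and property $S_\sigma$; transporting these properties across the completely isometric weak*-homeomorphism gives the same for $\M_d$.

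I do not expect any real obstacle here, since the statement is a corollary and all the substantive work is already contained in Theorem~\ref{T:ccap}. The only points requiring a word of care are (i) confirming that the CCAP and property $S_\sigma$ are preserved under completely isometric weak*-homeomorphisms --- immediate from the definitions, since one just conjugates the approximating net $\Phi_i$ by the isomorphism --- and (ii) being explicit that the zero ideal and the commutator ideal are both homogeneous so that Theorem~\ref{T:ccap} genuinely applies. With these observations in place the proof is essentially one line: apply Theorem~\ref{T:ccap} to $\J = \{0\}$ and to $\J = \C$.
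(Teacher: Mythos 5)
Your proof is correct and follows exactly the paper's own argument, which simply applies Theorem~\ref{T:ccap} to $\J=\{0\}$ and $\J=\C$. The extra remarks you include (homogeneity of $\{0\}$ and $\C$, and transport of the CCAP across the completely isometric weak*-homeomorphism $\M_d\cong\L_\C$) are sound and only make explicit what the paper leaves implicit.
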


\begin{proof}
Let $\J=\{0\}$ and $\C$ in Theorem \ref{T:ccap}. 
\end{proof}

\begin{rem}
Let $\J$ be a homogeneous ideal of $\L$, and let $S_i^\J=P_{\N_\J}L_i|_{\N_\J}$  for $i=1,\ldots,d$. 
From the proof of Theorem \ref{T:ccap}, one obtains that every $A$ in $\L_\J$ has a Fourier series of the form
\[
A\sim \sum_{w\in\Fd} a_wS_w^\J,
\]
and that
\[
A=w^*\!\!-\!\!\lim_k \sum_{|w|<k}\Big(1-\frac{|w|}{k}\Big) a_wS_w^\J. 
\]
These facts  will be useful later. 
\end{rem}

It would be nice to know whether $\L_\J$ has one or both of the CBAP and property $S_\sigma$ when $\J$ is an arbitrary (i.e. not necessarily homogeneous) ideal.
(See also Question \ref{Q:conv} in Section \ref{S:LJ} below.)

\section{Homogeneous Hopf ideals}
\label{S:hhideal}

Homogeneous Hopf ideals provide a nice class of ideals in $\L$. They are determined by
homogeneous semigroup congruences of $\bF_d^{*0}$, the semigroup obtained by adjoining a zero element $0$
to $\Fd$. Let $\J$ be a homogeneous ideal of $\L$, and let $S$ denote the  semigroup quotient of $\bF_d^{*0}$ determined 
by the corresponding semigroup congruence. We will prove that the algebra $\L_\J$
is unitarily equivalent to a semigroup algebra $\L[S]$ generated by a weighted left regular representation of $S$.
This unitary equivalence will facilitate our study of the algebra $\L_\J$.

\subsection{The structure of homogeneous coideals}
\label{subS:coideal}

Let $D_n(\bC)$ denote the abelian subalgebra of  $M_n(\bC)$ consisting of all diagonal matrices.  
The following lemma is an easy consequence of the fact that $D_n(\bC)$ is an abelian von Neumann algebra. 

\begin{lem}
\label{L:basis}
For $n\geq1$, let $\mathcal{A}$ be a nonzero subalgebra of $D_n(\bC)$. 
Then there is a basis  for $\mathcal{A}$ consisting of pairwise orthogonal projections.
\end{lem}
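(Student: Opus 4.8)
The plan is to show that $\mathcal{A}$ coincides with the linear span of the projections it contains, and then to invoke the structure of finite-dimensional abelian von Neumann algebras. Identify $D_n(\bC)$ with $\bC^n$ by sending a diagonal matrix to its diagonal; note that $\bC^n$ has no nonzero nilpotents, so each $a\in\mathcal{A}$ is a diagonal (hence diagonalizable) matrix. For $a\in\mathcal{A}$, let $\lambda_1,\dots,\lambda_k$ be its distinct nonzero diagonal entries. The key observation is that the spectral projection $p^{(a)}_j=\sum_{i:\,a_i=\lambda_j}E_{ii}$ lies in $\mathcal{A}$: since $0,\lambda_1,\dots,\lambda_k$ are distinct, there is a polynomial $P_j$ with $P_j(0)=0$, $P_j(\lambda_j)=1$, and $P_j(\lambda_l)=0$ for $l\neq j$; as $P_j$ has zero constant term we may write $P_j(t)=tQ_j(t)$, so $p^{(a)}_j=a\,Q_j(a)\in\mathcal{A}$ even though $\mathcal{A}$ need not contain a unit.

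Next I would observe that $a=\sum_{j=1}^k\lambda_j\,p^{(a)}_j$, so every element of $\mathcal{A}$ is a linear combination of spectral projections lying in $\mathcal{A}$. Hence $\mathcal{A}$ equals the linear span $\mathcal{B}$ of the set of all such $p^{(a)}_j$. Since a product of diagonal projections is again a diagonal projection and each $p^{(a)}_j$ is self-adjoint, $\mathcal{B}$ is a self-adjoint subalgebra of $D_n(\bC)$, i.e.\ a finite-dimensional abelian von Neumann subalgebra of $D_n(\bC)$.

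Finally I would use that a finite-dimensional abelian von Neumann algebra is the linear span of its finitely many minimal projections, which are pairwise orthogonal and linearly independent. Concretely, the atoms of the finite Boolean algebra of projections generated by the $p^{(a)}_j$ (closed under $p\wedge q=pq$, $p\vee q=p+q-pq$, and relative complement $e-p$) are pairwise orthogonal projections $p_1,\dots,p_r\in\mathcal{B}=\mathcal{A}$, and every $p^{(a)}_j$, hence every element of $\mathcal{A}$, is a sum of a subfamily of the $p_i$. Thus $p_1,\dots,p_r$ form the desired basis. The main obstacle is the first step: because $\mathcal{A}$ is only assumed to be a subalgebra — not unital and not a priori self-adjoint — one must extract the spectral projections of an element inside $\mathcal{A}$ via a polynomial vanishing at the origin; once this is done, the remainder is routine finite-dimensional von Neumann algebra structure theory.
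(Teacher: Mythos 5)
Your proof is correct. The paper in fact gives no proof of this lemma, saying only that it is ``an easy consequence of the fact that $D_n(\bC)$ is an abelian von Neumann algebra''; your argument is the natural way to make that remark precise, and you correctly isolate the one point that actually needs care --- $\mathcal{A}$ is not assumed unital or self-adjoint, so one must extract the spectral projections of $a$ using a polynomial with zero constant term to stay inside $\mathcal{A}$. Once $\mathcal{A}$ is known to be spanned by the projections it contains, the passage to the pairwise orthogonal minimal projections is routine finite-dimensional structure theory, exactly as you describe.
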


For $w\in\Fd$, let $\varphi_w$ be the ``coefficient functional" on $\L$, given by
\[
\varphi_{w}(A)=[\xi_\mt\xi_w^*](A)=(A\xi_\mt, \xi_w), \quad A\in \L,
\]
which ``peels'' off the $w$-th Fourier coefficient of $A$. Note that
\[
\varphi_{w}(L_{w'})=\delta_{w,w'},\quad w,w'\in\Fd.
\]
These functionals are evidently weak*-continuous. It is easy
to see that they weakly span the predual of $\L_*$.

\begin{prop}
\label{P:coideal}
Let $\J\subset \L$. Then $\mathcal{J}$ is a homogeneous coideal of $\L$ if and only if 
there are disjoint subsets $W,W_1,W_2,\ldots$ of $\Fd$ with $W_n\subset \{w\in\Fd: |w|=n\}$
such that 
\[
\mathcal{J}
=\ol{\spn}^{w^*} \big(\{L_{u}-L_{v} \mid u,v \in W_n, \, n \geq 1 \} \cup \{ L_w \mid w \in W\}\big).
\]
\end{prop}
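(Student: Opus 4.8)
The plan is to characterize homogeneous coideals by exploiting the grading. Since $\J$ is assumed weak*-closed and homogeneous, write $\J = \ol{\spn}^{w^*}(\bigcup_{n\geq 0} \J_n)$, where $\J_n = \J \cap \spn\{L_w : |w| = n\}$ is the degree-$n$ part; homogeneity means $\J$ is recovered from its homogeneous pieces via the Ces\`aro maps $\Sigma_k$ (or the $\Phi_j$), so it suffices to understand each finite-dimensional space $\J_n$. The comultiplication is $\Delta(L_w) = L_w \otimes L_w$, and the coideal condition $\Delta(\J) \subseteq \ol{\J \otimes \L + \L \otimes \J}^{w^*}$ should be pushed down to a condition on each $\J_n$ inside the finite-dimensional space $\spn\{L_w : |w| = n\}$. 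The key point is that on this span, $\Delta$ acts diagonally: identifying $\spn\{L_w : |w|=n\}$ with $\bC^{d^n} \cong D_{d^n}(\bC)$ via $L_w \mapsto e_w$ (the $w$-th diagonal matrix unit), the map $\Delta$ restricted to degree $n$ sends $e_w \mapsto e_w \otimes e_w$, which is exactly the comultiplication making $D_{d^n}(\bC)$ into the function algebra on the finite set $\{w : |w|=n\}$.

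First I would make precise the reduction to the graded pieces: using the coefficient functionals $\varphi_w$ and the fact that $A \mapsto \Sigma_k(A)$ converges to $A$ with $\Sigma_k$ preserving $\J$ (for homogeneous $\J$), show that $\J$ is a coideal if and only if for each $n$, $\Delta(\J_n)$ lies in the appropriate sum. Then, because $\Delta$ is ``degree-diagonal,'' $\Delta(\J_n) \subseteq \spn\{L_w \otimes L_w : |w|=n\}$, and intersecting $\ol{\J\otimes\L + \L\otimes\J}^{w^*}$ with this finite-dimensional diagonal space, one checks it equals $\J_n \otimes \spn\{L_w\} + \spn\{L_w\} \otimes \J_n$ restricted to the diagonal — i.e. $\spn\{e_w \otimes e_w : |w|=n, \, e_w \in \J_n + (\text{span of the } e_w)\}$. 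The upshot: the coideal condition forces $\J_n$, viewed as a subspace of $D_{d^n}(\bC)$, to be closed under the coproduct of that function algebra, which (since $D_{d^n}(\bC)$ with pointwise product is a commutative algebra and the diagonal coproduct is its dual) is equivalent to $\J_n$ being a \emph{subalgebra} of $D_{d^n}(\bC)$ under the pointwise (Hadamard/Schur) product $L_u \cdot L_v = \delta_{u,v} L_u$.

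Now Lemma \ref{L:basis} applies: a nonzero subalgebra $\mathcal{A}$ of $D_{d^n}(\bC)$ has a basis of pairwise orthogonal projections, i.e. of indicator vectors $\sum_{w \in B_\alpha} e_w$ for a partition $\{B_\alpha\}$ of a subset of $\{w : |w|=n\}$. Translating back, $\J_n$ has a basis of ``block sums'' $\sum_{w \in B_\alpha} L_w$. At this point I would separate: singleton blocks $\{w\}$ contribute elements $L_w \in \J$, which collect into the set $W \cap \{|w|=n\}$; a block $B_\alpha$ with $|B_\alpha| \geq 2$ gives, by taking differences of its spanning element with... actually the cleanest route is: a subspace of $D_{d^n}(\bC)$ spanned by orthogonal $0/1$-projections $p_1, \dots, p_r$ with supports $B_1, \dots, B_r$ is the same as the span of $\{L_u - L_v : u, v \in B_\alpha \text{ same block}\} \cup \{L_w : \{w\} \text{ is a block}\}$ — one shows each $p_\alpha = \sum_{w \in B_\alpha} L_w$ lies in that span when $|B_\alpha|\geq 2$ is \emph{false} in general, so instead one argues the other direction: the span of the differences-and-singletons is a subalgebra (closed under Hadamard product, easy check), and conversely any Hadamard-subalgebra decomposes this way by the projection basis. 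Setting $W_n$ to be the union of the non-singleton blocks $B_\alpha$ at level $n$ (and noting that the differences $L_u - L_v$ for $u,v$ in the \emph{same} block, ranging over all blocks, together with the singletons, span $\J_n$), and letting $W = \bigcup_n \{w : \{w\} \text{ a singleton block at level } n\}$, assembling over all $n$ and taking weak*-closed span gives the stated form. The converse — that any $\J$ of the displayed form is a homogeneous coideal — is a direct computation: it is manifestly homogeneous, and $\Delta(L_u - L_v) = L_u \otimes L_u - L_v \otimes L_v = (L_u - L_v)\otimes L_u + L_v \otimes (L_u - L_v) \in \J \otimes \L + \L \otimes \J$, while $\Delta(L_w) = L_w \otimes L_w \in \J \otimes \L$, so the spanning set maps into $\ol{\J \otimes \L + \L \otimes \J}^{w^*}$, and weak*-continuity of $\Delta$ finishes it.

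The main obstacle I anticipate is making the reduction to the finite-dimensional graded pieces fully rigorous in the weak*-topology: one must be careful that $\Delta$ commutes with the Ces\`aro/grading projections in the right way and that $\ol{\J\otimes\L + \L\otimes\J}^{w^*}$ intersected with the degree-$n$ diagonal subspace is exactly what I claimed — the weak*-closure could \emph{a priori} be larger than the algebraic sum even after intersecting with a finite-dimensional space, so I would argue that the intersection with the finite-dimensional subspace $\spn\{L_w \otimes L_w : |w| = n\}$ of a weak*-closed set is computed by applying the (weak*-continuous) slice/coefficient functionals $\varphi_u \otimes \varphi_v$, reducing everything to honest linear algebra over $\bC$. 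Once that is in place, the rest is the structure theory of finite-dimensional commutative (Hadamard) algebras via Lemma \ref{L:basis}, which is routine.
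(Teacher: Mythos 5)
Your converse direction (the displayed span is a homogeneous coideal) is correct and is exactly the paper's computation. The forward direction, however, contains a genuine error: you dualize on the wrong side. The coideal condition $\Delta(\J)\subseteq\ol{\J\otimes\L+\L\otimes\J}^{w^*}$ does \emph{not} force $\J_n$ to be closed under the Hadamard product on $D_{d^n}(\bC)$; it forces the \emph{preannihilator} $\J_\perp$ to be a subalgebra of $\L_*$ under convolution (for $\varphi,\psi\in\J_\perp$ and $A\in\J$ one has $\varphi\star\psi(A)=(\varphi\otimes\psi)\Delta(A)=0$), and on the coefficient functionals of fixed degree that convolution is the Hadamard product. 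In coalgebra language: a subspace is a coideal iff its annihilator in the dual is a subalgebra — not iff the subspace itself is one. A concrete counterexample to your claim: in $\L_2$ take $\J_1=\spn\{L_1-L_2\}$, which is (the degree-one piece of) a homogeneous coideal since $\Delta(L_1-L_2)=(L_1-L_2)\otimes L_1+L_2\otimes(L_1-L_2)$; yet $(L_1-L_2)\cdot(L_1-L_2)=L_1+L_2\notin\J_1$ under the Hadamard product. You in fact brush against this problem yourself (the parenthetical ``\ldots is \emph{false} in general''), but the repair you offer — that the span of differences-and-singletons is Hadamard-closed — fails for the same reason, so Lemma \ref{L:basis} cannot be applied to $\J_n$ as you propose.

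The fix, which is the paper's route, is to apply Lemma \ref{L:basis} to $\J_\perp^n=\J_\perp\cap\L_*^n$, obtaining a basis of pairwise orthogonal projections $\psi_i=\sum_{w\in W_i}\varphi_w$ for disjoint $W_i\subseteq\{|w|=n\}$; then $\J_n$ is recovered as the common kernel of the $\psi_i$ inside $\L^n$, i.e.\ $\{\sum a_uL_u:\sum_{w\in W_i}a_w=0\ \forall i\}$, which is precisely the span of $\{L_u-L_v: u,v\in W_i\}$ together with $\{L_w: w\notin\cup_iW_i\}$. Your reduction to the finite-dimensional graded pieces via the coefficient functionals $\varphi_u\otimes\varphi_v$ is a legitimate concern and handled reasonably, but it cannot rescue the argument because the intermediate structural claim about $\J_n$ is simply not what the coideal condition gives you.
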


\begin{proof}
($\Leftarrow$) The proof of this implication is straightforward. Clearly $\J$ is homogeneous, and $\J$ is a coideal since  $\Delta(L_w)=L_w\otimes L_w\in \J\otimes \J$ and  
\begin{align*}
\Delta(L_u-L_v)
&=L_u\otimes L_u-L_v\otimes L_v\\
&=(L_u-L_v)\otimes L_u +L_v\otimes (L_u-L_v)\\
&\in \J\otimes \L+ \L\otimes \J
\end{align*}
for all $u,v,w\in\Fd$. 

($\Rightarrow$)
Note that $\mathcal{L}$ is weak*-spanned by its homogeneous elements.
For $n\geq1$, let $\mathcal{L}^{n}$ denote the set of homogeneous
elements in $\mathcal{L}$ of degree $n$, i.e.
\[
\mathcal{L}^n=\mathrm{span}\left\{ \sum_{|w|=n}a_{w}L_{w}\right\}. 
\]
Similarly, the Hopf convolution algebra $\mathcal{L}_{*}$ is also weakly spanned by its homogeneous
elements. For $n\geq1$, let $\mathcal{L}_{*}^{n}$ denote the
set of homogeneous elements in $\mathcal{L}_{*}$ of degree $n$, i.e.
\[
\mathcal{L}_{*}^{n}=\mathrm{span}\left\{ \sum_{|w|=n}a_{w}\varphi_{w}\right\}.
\]
Notice that each $\mathcal{L}_{*}^{n}$ is actually a subalgebra of
$\mathcal{L}_{*}$ which is naturally isomorphic to the diagonal matrix algebra $D_{d^n}(\bC)$.

Since $\mathcal{J}$ is a coideal of $\L$, one can easily verify that its preannihilator $\J_\perp$ is a subalgebra of $\L_{*}$.
Indeed, for $\varphi,\psi\in\J_\perp$ and $A\in \J$, we have 
\[
\varphi\star\psi(A)=(\varphi\otimes\psi)(\Delta_\L(A))=0,
\] 
by the properties of a coideal.
Since $\J$ is homogeneous, it is spanned by its homogeneous elements.
For $n \geq 1 $,  let $\mathcal{J}_{n}=\mathcal{J}\cap\mathcal{L}^{n}$ denote the set of homogeneous elements in $\mathcal{J}$ of degree
$n$, and set $\mathcal{J}_{\perp}^{n}=\mathcal{J}_{\perp}\cap\mathcal{L}_{*}^{n}$.
This is easily seen to be a subalgebra of $\mathcal{L}_{*}^n$, and hence by Lemma \ref{L:basis}, 
we can find a basis $\psi_{1},\ldots,\psi_{m}$ for $\mathcal{J}_{\perp}^{n}$ consisting of pairwise orthogonal projections.
Hence there are disjoint subsets $W_{1},\ldots,W_{m}$ of $\{w\in\Fd: |w|=n$\} such that 
\[
\psi_{i}=\sum_{w\in W_{i}}\varphi_{w}, \quad 1\le i\leq m.
\]
Set $W = \{w \in \Fd \mid |w| = n, \, w \notin \cup_{i = 1}^m W_i \}$. Then it follows that 
\begin{align*}
\mathcal{J}_{n}
&=\spn\left\{\sum_{u\in\Fd} a_uL_u \in \L^n: \sum_{w\in W_i} a_w =0, \, i=1,...,m\right\}\\
&=\spn \big(\{L_{u}-L_{v} \mid u,v \in W_i, \, 1 \leq i \leq m \} \cup \{ L_w \mid w \in W\}\big),                  
\end{align*}
which gives the desired form of $\J$.
\end{proof}

\subsection{The structure of homogeneous Hopf ideals}

For a semigroup $G$, a \textit{semigroup congruence} on $G$ is an equivalence relation which is compatible with the 
semigroup operation.  As above, we write $G^0$ to denote the semigroup obtained by adjoining a zero element $0$ to $G$. 
We will consider $\bF_d^{*0}$, but notice that the length map is only defined on $\Fd$. We use the convention that $L_0:=0$, the zero operator in $\L$.
So $L_0$ is always in every ideal of $\L$. Recall that $\varnothing$ denotes the identity element in $\Fd$. We use the standard convention that $L_\varnothing = I$.

\begin{defn}
A semigroup congruence $\sim$ on $\bF_d^{*0}$ is said to be \textit{homogeneous} if whenever $u\sim v$ with $u\not\sim 0$ (and/or $v\not\sim 0$) 
implies $|u|=|v|$. 
\end{defn}


\begin{thm}
\label{T:Hopfideal} 
Let $\J\subset\L$. Then the following statements are equivalent:
\begin{itemize}
\item[(i)] The set $\mathcal{J}$ is a homogeneous Hopf ideal of $\mathcal{L}$.
\item[(ii)] There is a homogeneous semigroup congruence $\sim$ on $\bF_d^{*0}$ such that $\J$ is of the form
\[
\mathcal{J}=\ol{\spn}^{w^*}\{L_{u}-L_{v} \mid u,v\in \bF_d^{*0} \text{ with } u\sim v\}.
\]
\end{itemize}
\end{thm}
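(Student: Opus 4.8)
The plan is to prove the equivalence (i) $\Leftrightarrow$ (ii) by combining the coideal description from Proposition \ref{P:coideal} with the extra bookkeeping forced by the ideal condition, translating this into the statement that a certain equivalence relation on $\bF_d^{*0}$ is a semigroup congruence.

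\textbf{Direction (ii) $\Rightarrow$ (i).} First I would check that the given $\J$ is a coideal: this is exactly the computation already carried out in the ($\Leftarrow$) part of Proposition \ref{P:coideal}, since a homogeneous congruence partitions each length-$n$ level $\{w : |w| = n\}$ into classes, giving sets $W_n$ (the union of the non-zero classes) and $W$ (the class of $0$). The new content is that $\J$ is a two-sided ideal. For this I would use that $L_a(L_u - L_v) = L_{au} - L_{av}$ and $(L_u - L_v)L_a = L_{ua} - L_{va}$, together with the defining property of a congruence: $u \sim v$ implies $au \sim av$ and $ua \sim va$ for every $a \in \bF_d^{*0}$. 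Hence $L_a \J_0 \subseteq \J_0$ and $\J_0 L_a \subseteq \J_0$ where $\J_0$ is the (dense) span of the generators; since multiplication by a fixed $L_a$ is weak*-continuous (being the dual of a weak*-continuous map, or by direct checking on Fourier series), passing to the weak* closure gives $L_a \J \subseteq \J$ and $\J L_a \subseteq \J$. Finally, a general element of $\L$ is a weak*-limit of its Ces\`aro sums $\Sigma_k(A) = \sum_{|w| < k}(1 - |w|/k) a_w L_w$, which are polynomials in the $L_i$; so $A \J \subseteq \J$ and $\J A \subseteq \J$ follow by another weak*-continuity/density argument. Coassociativity-type conditions are automatic since $\J$ is already a coideal.

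\textbf{Direction (i) $\Rightarrow$ (ii).} Starting from a homogeneous Hopf ideal $\J$, Proposition \ref{P:coideal} already gives disjoint subsets $W, W_1, W_2, \dots$ with $W_n \subseteq \{|w| = n\}$ and
\[
\J = \ol{\spn}^{w^*}\big(\{L_u - L_v : u, v \in W_n, n \geq 1\} \cup \{L_w : w \in W\}\big).
\]
I would then define a relation $\sim$ on $\bF_d^{*0}$ by declaring $u \sim v$ if $u, v$ lie in a common $W_n$, declaring $w \sim 0$ for all $w \in W$ (and $0 \sim 0$), and declaring every word not appearing in any $W_n$ or in $W$ to be equivalent only to itself. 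By construction this is a homogeneous equivalence relation and $\J$ has the claimed form. The only thing left to verify is that $\sim$ is a \emph{semigroup} congruence, i.e.\ $u \sim v \Rightarrow au \sim av$ and $ua \sim va$. Here is where I use that $\J$ is an \emph{ideal}: if $u \sim v$ with $u, v \in W_n$, then $L_u - L_v \in \J$, so $L_a(L_u - L_v) = L_{au} - L_{av} \in \J$, and comparing Fourier coefficients with the explicit description of $\J$ forces $au$ and $av$ to lie in the same set $W_{n + |a|}$ (or both in $W$, if one of them is) --- using the convention $L_0 = 0$ to handle the case where $au$ or $av$ has already been collapsed to $0$. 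The right-multiplication case is symmetric. One must also handle $a = 0$: then $au = av = 0$, consistent with the convention.

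\textbf{Main obstacle.} I expect the subtle point to be the careful handling of the zero element and the ``absorbing'' set $W$: one needs the convention $L_0 = 0$ so that, e.g., $u \in W$ (hence $L_u \in \J$) implies $au \sim 0$, and one must check that the relation $\sim$ so defined is genuinely transitive and compatible when some products land in $W$ while others do not --- in particular that $W$ is closed under multiplication on either side by arbitrary elements (which again follows from $L_w \in \J \Rightarrow L_{aw}, L_{wa} \in \J$). A secondary technical point is justifying each passage from the algebraic (polynomial) level to the weak*-closed level; this is routine given the Ces\`aro approximation and weak*-continuity of multiplication recorded in the preliminaries, but it should be stated explicitly. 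The extraction of the combinatorial conclusion ``$au, av$ in the same $W_m$'' from the operator identity $L_{au} - L_{av} \in \J$ is immediate once one notes that $\J$, being homogeneous, is determined levelwise and that within a level the description of $\J$ in Proposition \ref{P:coideal} exactly says which differences of basis words it contains.
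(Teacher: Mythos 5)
Your proof is correct and follows essentially the same route as the paper: Proposition \ref{P:coideal} handles the coideal structure in both directions, and the ideal condition is traded for the congruence condition via $L_a(L_u-L_v)=L_{au}-L_{av}$ exactly as in the paper. The only difference is cosmetic: in (i) $\Rightarrow$ (ii) the paper simply defines $u\sim v$ iff $L_u-L_v\in\J$ (i.e.\ the kernel congruence of the semigroup homomorphism $w\mapsto L_w+\J$ into the quotient algebra), so that the congruence property is automatic from $\J$ being an ideal, whereas you reconstruct $\sim$ from the sets $W, W_n$ and then re-verify compatibility by comparing Fourier coefficients --- equivalent, just slightly more laborious.
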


\begin{proof}
(i) $\Rightarrow$ (ii)
Suppose that $\J$ is a homogeneous Hopf ideal of $\L$. Let us identify $\L_\J$ with $\L/\J$. Define $h: \mathbb{F}_d^{*0}\to \L/\J$ by 
\begin{align}
\label{E:h}
h(w)=L_w+\J, \quad w\in \mathbb{F}_d^{*0}.
\end{align}
Clearly $h$ is a semigroup homomorphism. Thus its kernel $\ker h$ gives rise to a semigroup congruence $\sim$.
Namely, $u\sim v$ if and only if  $h(u)=h(v)$ for $u, v\in \mathbb{F}_d^{*0}$. In other words, $u\sim v$ if and only if $L_u-L_v\in \J$. 
This gives $\ol\spn^{w^*}\{L_u-L_v \mid  u\sim v\}\subset \J$, and the reverse inclusion follows directly from Proposition \ref{P:coideal}. 

\medskip
(ii) $\Rightarrow$ (i)
By Proposition \ref{P:coideal}, $\J$ is a homogeneous coideal. So it remains  to prove that $\J$ is an ideal.
But this is straightforward: Suppose $u \sim v$, so that $L_u-L_v$ is an element of $\J$. For every word $w \in \mathbb{F}_d^{*0}$, 
$wu \sim wv$ and $uw \sim vw$ since $\sim$ is a semigroup congruence. Hence $L_w(L_u-L_v)\in\J$ and $(L_u-L_v)L_w\in \J$, which proves $\J$ is an ideal. 
\end{proof}

\subsection{Semigroup algebras associated to homogeneous Hopf ideals}
\label{subS:L[S]}

Let $\J$ be a homogeneous Hopf ideal of $\L$, and $\sim$ be the corresponding semigroup congruence on $\bF_d^{*0}$ as in Theorem \ref{T:Hopfideal}.

We will use $S$ to denote the quotient  semigroup $\bF_d^{*0}/\sim$, and we will use $S_0$ to denote the
subset of all nonzero elements of $S$, i.e. $S_0 =S\setminus \{\dot{0}\}$. (Here $\dot{0}$ denotes the image of $0$ in $S$.)

Clearly, $S$ is unital with zero element $\dot{0}$.
While $S_0$ is not a semigroup in general, it is not hard to see that $S_0$ is a (unital) semigroup if and only if $S$ is non-trivial and has no nonzero zero-divisors. 

\begin{eg}
\label{Eg:Hopfideal}
We now give some examples illustrating the correspondence between homogeneous ideals of $\L$ and semigroups arising from homogeneous semigroup congruences.

\begin{itemize}

\item[(i)] If $\J=\L$, then $S = \{\dot{0}\}$. 

\item[(ii)] If $\J=\{0\}$, then $S \cong \mathbb{F}_d^{*0}$.

\item[(iii)] If $\J=\C$, then $S \cong \mathbb{N}^{d,0}$.
  
\item[(iv)] If $\J$ is the ideal generated by $L_1, \ldots, L_d$, then $S \cong \{0,1\}$. 
  
\item[(v)] If $\J$ is the ideal generated by one of $L_1, ..., L_d$, then $S \cong \mathbb{F}_{d-1}^{*0}$. 

\item[(vi)] If $\J$ is the ideal generated by $L_{12}$ in $\L_2$, then $S \cong \langle e_1, e_2: e_1e_2=0\rangle$. 
\end{itemize}
\end{eg}

Let $q:\bF_d^{*0}\to S$ denote the canonical quotient map corresponding to a homogeneous semigroup congruence $\sim$. 
For $w\in \bF_d^{*0}$, we will also write $q(w)$ as $\dot{w}$. Let $[w] = q^{-1}(\dot{w})$ denote the equivalence class of $w$ modulo $\sim$.
By homogeneity, if $\dot{w} \in S_0$, then every $v \in [w]$ satisfies $|v| = |w|$.
In particular, the cardinality of the equivalence class $[w]$ is necessarily finite.
To simplify our notation, we set
\[
[s]=q^{-1}(s), \quad s\in S. 
\]

Let $\H[S]$ denote the Hilbert space with orthogonal basis $\{x_s: s\in S_0\}$ (recall that $S_0$ = $S \setminus \{\dot{0}\}$).
The inner product $(\cdot,\cdot)_S$ on $\H[S]$  is defined by 
\[
(x_s, x_t)_S=\delta_{s,t}\, \frac{1}{|[s]|}, \quad s,t \in S_0.
\]
Thus $\{y_s:= \sqrt{|[s]|}\, x_s \mid s\in S_0\}$ is an orthonormal basis of $\H[S]$.  It is convenient notationally to also set $x_{\dot 0}=0$.

We now define a weighted left regular representation of $S$ on $\H[S]$ as follows. 
For each $s\in S$, define 
\[
L_s x_t=x_{st}, \quad t \in S.
\]
Then it is easy to check that
\[
L_s y_t=\sqrt\frac{|[t]|}{|[st]|}\, y_{st}, \quad t \in S.
\]
One can check that each  $L_s$ is a contraction. Actually, we will soon see that $L_s$ 
is unitarily equivalent to $P_{\N_\J}L_w|_{\N_\J}$ for any $w \in q^{-1}(s)$. 

The \textit{semigroup algebra} $\L[S]$ is defined to be the $\wot$ closed algebra generated by the above representation of $S$. That is,
\[
\L[S]=\ol\alg^{\wot}\{L_s:s\in S\}.
\]

We are now ready to prove the main result of this subsection.

\begin{thm}
\label{T:uniequ}
Let $\J$ be a homogeneous ideal of $\L$, and let $S$ denote the corresponding semigroup as above. Then
$\L_\J$ and $\L[S]$ are unitarily equivalent.
\end{thm}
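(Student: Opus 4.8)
The plan is to exhibit an explicit unitary $U : \N_\J \to \H[S]$ implementing the equivalence, by sending the natural (non-orthonormal) ``bases'' to each other. Recall $\N_\J = (\J\xi_\mt)^\perp$ and, since $\J$ is homogeneous, $\N_\J = \bigoplus_k \V_k$ with $\V_k \subseteq \F_k$. By Proposition \ref{P:coideal}, $\J$ is the weak*-closed span of the $L_u - L_v$ with $u \sim v$, $u,v \notin [0]$, together with the $L_w$ with $\dot w = \dot 0$. Hence $\J\xi_\mt = \ol{\spn}\{\xi_u - \xi_v : u \sim v,\ u,v\notin[0]\} \cup \{\xi_w : \dot w = \dot 0\}$, and so a direct computation identifies $\N_\J$ as the closed span of the vectors
\[
\eta_s := \frac{1}{|[s]|}\sum_{w \in [s]} \xi_w, \qquad s \in S_0,
\]
one for each equivalence class of nonzero elements. (Each $\eta_s$ is orthogonal to every $\xi_u - \xi_v$ with $u\sim v$ and to every $\xi_w$ with $\dot w = \dot 0$, and these vectors exhaust the orthogonal complement.) A short calculation with the orthonormality of the $\xi_w$ gives $(\eta_s, \eta_t) = \delta_{s,t}/|[s]|$, which is exactly the inner product $(x_s, x_t)_S$ on $\H[S]$. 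Therefore the map $U\eta_s = x_s$ extends to a well-defined unitary $U : \N_\J \to \H[S]$.

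The next step is to check that $U$ intertwines the compressed generators with the weighted left regular representation: that is, $U\, P_{\N_\J} L_i|_{\N_\J} = L_{\dot i}\, U$ for $i = 1,\dots,d$, and more generally $U\, P_{\N_\J} L_w|_{\N_\J} = L_{\dot w}\, U$ for every $w \in \Fd$. For this one computes $P_{\N_\J} L_w \eta_s$. Since $L_w$ maps $\F_k$ into $\F_{k+|w|}$, we have $L_w \eta_s = \frac{1}{|[s]|}\sum_{v\in[s]} \xi_{wv}$, and projecting onto $\N_\J$ amounts to averaging $\xi_{wv}$ over its equivalence class. Because $\sim$ is a semigroup congruence, $wv \sim wv'$ whenever $v \sim v'$, so all the words $wv$ ($v \in [s]$) lie in a single class, namely $[\dot w \dot s]$; moreover each element of $[\dot w \dot s]$ of the appropriate form is hit, with the combinatorial multiplicities working out so that $P_{\N_\J} L_w \eta_s = \frac{|[s]|}{|[\dot w\dot s]|}\,\eta_{\dot w \dot s}$ when $\dot w\dot s \neq \dot 0$ (and $0$ otherwise — consistent with the convention $x_{\dot 0} = 0$). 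Applying $U$ and comparing with the formula $L_{\dot w} x_{\dot s} = x_{\dot w \dot s}$ (equivalently $L_{\dot w} y_{\dot s} = \sqrt{|[\dot s]|/|[\dot w \dot s]|}\, y_{\dot w \dot s}$) gives exactly the intertwining relation. The only delicate point here is the multiplicity count for $P_{\N_\J}L_w\eta_s$; I expect it follows from the observation that left translation by $w$ is injective on $\Fd$, so $[s] \to \{wv : v \in [s]\}$ is a bijection, and every element of $[\dot w\dot s]$ is of the form $wv$ for a unique such $v$ precisely when the congruence is homogeneous.

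Finally, I would conclude as follows. The unitary $U$ conjugates the generating set $\{P_{\N_\J}L_w|_{\N_\J} : w \in \Fd\}$ of $\L_\J$ onto the generating set $\{L_s : s \in S\}$ of $\L[S]$ (using $L_{\dot 0} = 0$ for the words with $\dot w = \dot 0$, matching the convention $L_0 := 0$ in $\L$). Conjugation by a unitary is a weak*-homeomorphic completely isometric isomorphism, hence carries the $\wot$-closed algebra generated by one set onto the $\wot$-closed algebra generated by the other; that is, $U \L_\J U^* = \L[S]$. Combined with the identification $\L_\J \cong \L/\J \cong \L_d/\J$ recalled in the preliminaries, this proves the theorem. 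The main obstacle is the bookkeeping in the previous paragraph — verifying that the projection $P_{\N_\J}$ applied to $L_w\eta_s$ produces precisely the weighted translate $\frac{|[s]|}{|[\dot w\dot s]|}\eta_{\dot w\dot s}$ — but this is ultimately a finite combinatorial identity about equivalence classes in $\bF_d^{*0}$ under a homogeneous congruence, not an analytic difficulty.
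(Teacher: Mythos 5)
Your construction is the same as the paper's: you identify $\N_\J$ with the span of the averaged class vectors, send them to the (orthogonal, non-unit) basis of $\H[S]$, and verify the intertwining relation on the generators $P_{\N_\J}L_w|_{\N_\J}$. The paper simply works with the unnormalized sums $\sum_{u\in[s]}\xi_u$ and sets $U\bigl(\sum_{u\in[s]}\xi_u\bigr)=|[s]|\,x_s$, which is the same operator as your $U$.

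However, the one step you yourself flag as delicate is stated incorrectly. With your normalization $\eta_s=\frac{1}{|[s]|}\sum_{v\in[s]}\xi_v$, the projection of a single basis vector is $P_{\N_\J}\xi_u=\frac{1}{|[u]|}\sum_{v'\in[u]}\xi_{v'}$, and since this vector depends only on the class of $u$, averaging over the $|[s]|$ words $wv$ with $v\in[s]$ (all of which lie in $[\dot w\dot s]$) gives $P_{\N_\J}L_w\eta_s=\eta_{\dot w\dot s}$ --- coefficient $1$, not $|[s]|/|[\dot w\dot s]|$. The coefficient you wrote is the one appropriate to the unnormalized sums; if it held for your $\eta_s$, the identity $UP_{\N_\J}L_w\eta_s=L_{\dot w}U\eta_s=x_{\dot w\dot s}$ you invoke would actually fail whenever $|[s]|\ne|[\dot w\dot s]|$ (take $\J=\C$, $d=2$, $w=1$, $s=\dot2$, so $|[s]|=1$ while $|[\dot1\dot2]|=2$). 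Relatedly, your proposed combinatorial justification --- that $v\mapsto wv$ is a bijection from $[s]$ onto $[\dot w\dot s]$ --- is false in that same example, since $\{12\}\subsetneq[12]=\{12,21\}$; the map is injective but generally not onto. No such bijection is needed: the computation only uses that $P_{\N_\J}\xi_{wv}$ is the same vector for every $v\in[s]$. With that correction the argument closes and coincides with the paper's proof.
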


\begin{proof}
By Theorem \ref{T:Hopfideal}, we have
\[
\J=\ol{\spn}^{w^*}\{L_{u}-L_{v} \mid u,v\in \bF_d^{*0} \text{ with } u\sim v\}.
\]
Thus the closure of the range of $\J$ is
\[
\ol{\mathcal{J}\xi_\mt}=\ol{\spn}\{\xi_{u}-\xi_{v} \mid u,v\in \bF_d^{*0} \text{ with } u\sim v\}.
\]
where we use the convention that $\xi_0 = 0$. 

Recall that $\N_\J = (\J \xi_\mt)^{\perp}$. Hence if $\sum_u a_u\xi_u\in\N_\J$, then $a_u = a_v$ whenever $u \sim v$, where we use the convention that $a_0 = 0$.
Thus 
\begin{align*}
\N_\J
&=\left\{\sum_{u\in\bF_d^{*0}} a_u\xi_u \in \ell^2(\Fd): a_u=a_v\text{ if }  u\sim v \right\}\\
&=\left\{\xi\in\ell^2(\Fd): \xi=\sum_{s\in S_0} a_s \sum_{u\in [s]} \xi_u \right\},
\end{align*}
where $a_s := a_u$ for some fixed representative $u$ of $[s]$.
In particular, the set of vectors
\[
\left\{ \eta_s := \sum_{u \in [s]} \xi_u \mid s \in S_0 \right\}
\]
is an orthogonal basis for $\N_\J$.

Define an operator $U:\N_\J \to \H[S]$ by
\[
U \eta_s = |[s]|x_s, \quad s \in S_0.
\]
Since $U$ maps an orthogonal basis onto an orthogonal basis and preserves the norm, it must be a unitary.  
It remains to show that for $s \in S_0$, and any representative $w \in [s]$,
\[
L_s = U(P_{\N_\J}L_w|_{\N_\J})U^*.
\]

Fix $s \in S_0$ and $w \in [s]$ as above. On the one hand, for $t \in S_0$, we have 
\[
L_s U \eta_t = |[t]| L_s x_t = |[t]| x_{st}.
\]
On the other hand, for $t \in S_0$  and $u\in [t]$, we can write
\[
\xi_{wu} = \frac{1}{|[wu]|} \left( \sum_{v \in [wu]} \xi_v + \sum_{\substack {v \in [wu] \\ v \ne wu}} (\xi_{wu} - \xi_v) \right),
\]
where
\[
\sum_{v \in [wu]} \xi_v \in \N_\J, \quad \text{and} \quad \sum_{\substack {v \in [wu] \\ v \ne wu}} (\xi_{wu} - \xi_v) \in \N_\J^\perp.
\]
Note that $[wu] = [st]$. Using this fact and the above decomposition gives
\begin{align*}
U P_{\N_\J} L_w \eta_t &= U P_{\N_\J} L_w \sum_{u \in [t]} \xi_u \\
&= U P_{\N_\J} \sum_{u \in [t]} \xi_{wu} \\
&= U P_{\N_\J} \sum_{u \in [t]} \frac{1}{|[wu]|} \left( \sum_{v \in [wu]} \xi_v + \sum_{\substack {v \in [wu] \\ v \ne wu}} (\xi_{wu} - \xi_v) \right) \\
&= U \sum_{u \in [t]} \frac{1}{|[wu]|} \sum_{v \in [wu]} \xi_v \\
&= U \sum_{u \in [t]} \frac{1}{|[st]|} \eta_{st} \\
&= U \frac{|[t]|}{|[st]|} \eta_{st} \\
&= |[t]| x_{st}.
\end{align*}
Thus $L_s U \eta_t = U P_{\N_\J} L_w \eta_t$ for every $t \in S_0$. We conclude that $P_{\N_\J}\L|_{\N_\J}$ and $\L[S]$ are unitarily equivalent.
\end{proof}

It follows immediately from the above theorem that the $\wot$ and weak* topologies coincide on $\L[S]$,
just as they do on $\L_\J$. 

\medskip
We end this section with the following remarks.

\begin{rem}
We slightly abuse notation and write $\L[S_0]$ for the $\wot$ closed algebra generated by those $L_s$ with $s$ nonzero.
Clearly $\L[S_0]=\L[S]$, since $L_{\dot 0}=0$. The reason we use $\L[S]$ in the first place, instead of simply using $\L[S_0]$, is because $S$ is always a 
semigroup while, in general, $S_0$ is not.
For example, the noncommutative analytic Toeplitz algebra $\L$, can be identified with each of the semigroup algebras $\L[\mathbb{F}_d^{*0}]$ 
and $\L[\Fd]$.
\end{rem}

\begin{rem}
Let $\J$ be an ideal of $\L$ that is not necessarily a Hopf ideal, and is not necessarily homogeneous. From the proof of Theorem \ref{T:Hopfideal}, $\J$ determines 
a semigroup congruence $\sim$ induced by the kernel of the homomorphism $h$ defined in \eqref{E:h}. Let $S$ 
be the unital semigroup as constructed at the beginning of Subsection \ref{subS:L[S]}. 
Suppose that $\J=\ol{\spn}^{w^*}\{L_{u}-L_{v}: u\sim v\}$, and that for every nonzero $s\in S$, the equivalence class $|[s]|$ is finite.  Then 
the proof of Theorem \ref{T:uniequ} shows that $P_{\N_\J}\L|_{\N_\J}$ and $\L[S]$ are unitarily equivalent. 
\end{rem}

\section{The Hopf structure of $\L_\J$ and $(\L_\J)_*$}
\label{S:LJ}

For an arbitrary Hopf ideal $\J$ of $\L$, we show that the algebra $\L_\J$ is a Hopf dual algebra. If $\J$ is furthermore
homogeneous, then its predual $(\L_\J)_*$ is also a Hopf algebra. This generalizes some of the results in \cite{Yang12}.   

\begin{lem}
\label{L:cc}
Let $\J$ be a Hopf ideal of $\L$. Then ${(\L_\J)}_*$ is an abelian Banach algebra.  
\end{lem}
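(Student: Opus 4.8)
The plan is to establish that $(\L_\J)_*$ is abelian by dualizing the comultiplication and reducing to the known Hopf structure of $\L$. First I would recall that $\L$ is a Hopf dual algebra with comultiplication $\Delta$ determined by $\Delta(L_w) = L_w \otimes L_w$ for $w \in \Fd$, and that $\Delta$ is \emph{cocommutative} in the sense that $\Sigma \circ \Delta = \Delta$, where $\Sigma: \L \otl \L \to \L \otl \L$ is the flip map; this is immediate on the generators $L_w$ (since $L_w \otimes L_w$ is symmetric) and extends to all of $\L$ by weak*-continuity together with the fact that $\L$ is weak*-spanned by the $L_w$'s. Dually, this says the convolution multiplication $\star$ on $\L_*$, defined by $\varphi \star \psi(A) = (\varphi \otimes \psi)(\Delta(A))$, is commutative: $\varphi \star \psi = \psi \star \varphi$ for all $\varphi, \psi \in \L_*$. (This is exactly the statement that $(\L_d)_*$ is a commutative Hopf convolution algebra, which is recalled in the Preliminaries from \cite{Yang12}.)

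Next I would pass to the quotient. Since $\J$ is a Hopf ideal of $\L$, the comultiplication $\Delta$ descends to a comultiplication $\Delta_{\L_\J}$ on $\L_\J \cong \L/\J$: because $\J$ is a coideal, $\Delta(\J) \subseteq \ol{\J \otl \L + \L \otl \J}^{w^*}$, which is precisely the kernel of the quotient map $\L \otl \L \to \L_\J \otl \L_\J$, so the induced map is well-defined, and it inherits from $\Delta$ the properties of being unital, weak*-continuous, completely contractive, and coassociative. Dualizing, the preannihilator $\J_\perp = (\L_\J)_*$ is a subalgebra of $\L_*$ under $\star$ (this is exactly the computation shown in the proof of Proposition~\ref{P:coideal}: for $\varphi, \psi \in \J_\perp$ and $A \in \J$, $\varphi \star \psi(A) = (\varphi \otimes \psi)(\Delta(A)) = 0$ by the coideal property), and its multiplication is the restriction of $\star$. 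A subalgebra of a commutative algebra is commutative, so $(\L_\J)_*$ is abelian.

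Finally, for the Banach-algebra structure I would note that $\star$ on $\L_*$ is (completely) contractive because $\Delta$ is (completely) contractive: $\|\varphi \star \psi\| = \|(\varphi \otimes \psi) \circ \Delta\| \leq \|\varphi \otimes \psi\| \, \|\Delta\|_{cb} \leq \|\varphi\|\,\|\psi\|$, using that the predual norm is the operator-space projective norm on $\L_* \oth \L_*$ paired against $\L \otl \L$. This bound is inherited by the subalgebra $(\L_\J)_*$, so it is an abelian Banach algebra (indeed a contractive one). The only genuinely substantive point is the cocommutativity of $\Delta$ and the identification of the predual multiplication with $\star$, both of which are already available from the Preliminaries and from \cite{Yang12}; the rest is routine bookkeeping about quotients and preannihilators. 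I expect the main obstacle, if any, to be purely notational: carefully checking that the quotient map $\L \otl \L \to \L_\J \otl \L_\J$ has kernel exactly $\ol{\J \otl \L + \L \otl \J}^{w^*}$ (which uses the slice-map / Fubini-product machinery, available here via Corollary~\ref{C:Fubini} when $\J$ is homogeneous, and more generally by a standard normal-tensor-product argument), so that $\Delta_{\L_\J}$ is genuinely well-defined; but none of this affects the one-line commutativity conclusion, which only needs that $(\L_\J)_*$ sits inside the commutative algebra $(\L_*, \star)$ as a subalgebra.
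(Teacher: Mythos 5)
Your proof is correct and follows essentially the same route as the paper: identify $(\L_\J)_*$ with $\J_\perp$, use the coideal property to check it is a $\star$-subalgebra of $\L_*$, and inherit commutativity and the Banach-algebra norm bound from the fact that $(\L_*,\star)$ is an abelian Banach algebra (the paper simply cites \cite[Theorem 5.1]{Yang12} for the latter, where you spell out the cocommutativity and contractivity arguments). The material on the well-definedness of the quotient comultiplication is not needed here (it belongs to Theorem \ref{T:LJHopf}), as you yourself note.
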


\begin{proof}
As before, we identify $\L_\J$ with $\L/\J$. Then ${(\L_\J)}_*={\L}_*\cap \J_\perp$.   
Now let $\varphi$, $\psi$ be two linear functionals in ${(\L_\J)}_*$. For $A\in \J$, it is easy to see that 
\[
\varphi\star\psi(A)=\varphi\otimes\psi ( \Delta_\L(A))=0,
\]
since $\J$ is a coideal. So $\varphi\star\psi\in(\L_\J)_*$, which implies that $(\L_\J)_*$ is a closed subalgebra of $\L_*$. 
Therefore $({(\L_\J)}_*, \star)$ is an abelian Banach algebra (\cite[Theorem 5.1]{Yang12}).
\end{proof}

\begin{thm}
\label{T:LJHopf}
Let $\J$ be an arbitrary ideal of $\L$. 
\begin{itemize}
\item[(i)]
If $\J$ is a Hopf ideal, then $\L_\J$ is a Hopf dual algebra. 
\item[(ii)] If $\J$ is homogeneous and $\L_\J$ is a Hopf dual algebra (with the canonical comultiplication), then $\J$ is a Hopf ideal. 
\end{itemize} 
\end{thm}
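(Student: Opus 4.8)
The proof will establish both directions. For part (i), the key point is that if $\J$ is a Hopf ideal, then the comultiplication $\Delta_\L$ on $\L$ descends to a well-defined comultiplication on the quotient $\L/\J \cong \L_\J$. The plan is to check that the composition $\L \xrightarrow{\Delta_\L} \L \ol\otimes \L \xrightarrow{\pi \otimes \pi} \L_\J \ol\otimes \L_\J$ kills $\J$, so that it factors through $\L_\J$ to give a map $\Delta_{\L_\J} \colon \L_\J \to \L_\J \ol\otimes \L_\J$. The fact that it kills $\J$ is exactly the coideal property: $\Delta_\L(\J) \subseteq \ol{\J \otimes \L + \L \otimes \J}^{w^*}$, and $(\pi \otimes \pi)$ annihilates this subspace since $\pi(\J) = 0$; here one must be slightly careful about the weak*-closure, but $\pi \otimes \pi$ is weak*-continuous (being the normal tensor product of weak*-continuous maps, or more carefully using that $\pi$ is weak*-weak* continuous and completely contractive), so it kills the weak*-closure too. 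Then $\Delta_{\L_\J}$ is unital, weak*-continuous and completely contractive because $\Delta_\L$ and $\pi \otimes \pi$ are, and it is coassociative because $\Delta_\L$ is and the quotient maps intertwine everything; this last verification is a routine diagram chase using the functoriality of the normal tensor product. It is also a homomorphism as a composition of homomorphisms.

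For part (ii), suppose $\J$ is homogeneous and $\L_\J$ carries the canonical comultiplication, meaning the comultiplication $\Delta_{\L_\J}$ sends the image $S_w^\J = \pi(L_w)$ of each generator to $S_w^\J \otimes S_w^\J$ (equivalently, $\Delta_{\L_\J}$ is the map induced by $\Delta_\L$ as above — this is the only sensible reading of ``canonical''). We must show $\J$ is a coideal, i.e.\ $\Delta_\L(\J) \subseteq \ol{\J \otimes \L + \L \otimes \J}^{w^*}$. Since $\J$ is homogeneous, Proposition \ref{P:coideal} tells us it suffices to show $\J$ has the form described there, or directly: for $A \in \J$ we want $\Delta_\L(A) \in \ol{\J \otimes \L + \L \otimes \J}^{w^*}$. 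The plan is to use that $\J$ is weak*-spanned by its homogeneous parts $\J_n$, and that each $A \in \J_n$ can be approximated (via the Ces\`aro maps $\Sigma_k$, which are finite-rank, weak*-continuous, and fix $\J$ since $\J$ is weak*-closed and the $\Sigma_k$ preserve homogeneous degree) by polynomials in $\J$. So reduce to $A = \sum_{|w| = n} a_w L_w \in \J$ a homogeneous polynomial. Then $\pi(A) = 0$ in $\L_\J$, and applying the canonical $\Delta_{\L_\J}$ gives $0 = \Delta_{\L_\J}(\pi(A)) = \sum_{|w|=n} a_w S_w^\J \otimes S_w^\J$ in $\L_\J \ol\otimes \L_\J$. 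The task is then to lift this relation back to conclude $\Delta_\L(A) = \sum_w a_w L_w \otimes L_w$ lies in $\ol{\J \otimes \L + \L \otimes \J}^{w^*}$.

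The bridge between the two is that $\J \ol\otimes \L + \L \ol\otimes \J$ is (the weak*-closure of) precisely the kernel of $\pi \otimes \pi \colon \L \ol\otimes \L \to \L_\J \ol\otimes \L_\J$ — this is where the approximation property proved in Section \ref{S:ccap} enters. Concretely, by Theorem \ref{T:ccap}, $\L_\J$ has property $S_\sigma$, so $\L_\J \ol\otimes \L_\J$ is the Fubini tensor product and one has the slice-map characterization: $T \in \L \ol\otimes \L$ satisfies $(\pi \otimes \pi)(T) = 0$ if and only if $T \in \ol{\ker\pi \otimes \L + \L \otimes \ker\pi}^{w^*} = \ol{\J \otimes \L + \L \otimes \J}^{w^*}$. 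Granting this, $(\pi \otimes \pi)(\Delta_\L(A)) = \Delta_{\L_\J}(\pi(A)) = 0$ immediately yields $\Delta_\L(A) \in \ol{\J \otimes \L + \L \otimes \J}^{w^*}$, which is the coideal condition. Since $\J$ is already an ideal by hypothesis, it is a Hopf ideal.

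\textbf{Main obstacle.} The delicate point is the identification of $\ker(\pi \otimes \pi)$ with $\ol{\J \otimes \L + \L \otimes \J}^{w^*}$ inside $\L \ol\otimes \L$. One inclusion is trivial; the other — that every $T$ with $(\pi\otimes\pi)(T) = 0$ lies in that closed subspace — is exactly the kind of statement that can fail without an approximation property, and is the reason Section \ref{S:ccap} was needed. I expect the proof to invoke property $S_\sigma$ (equivalently the operator approximation property of the predual, via Corollary \ref{C:Fubini}) together with standard slice-map / Fubini-tensor-product arguments to push this through. A secondary (more routine) obstacle in part (ii) is the reduction from general $A \in \J$ to homogeneous polynomials and the careful handling of weak*-closures throughout, but the Ces\`aro maps handle this cleanly.
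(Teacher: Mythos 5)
Your proposal is correct and follows essentially the same route as the paper: part (i) is the factorization of $(\pi_\J\otimes\pi_\J)\circ\Delta_\L$ through the quotient using the coideal inclusion $\Delta_\L(\J)\subseteq\ol{\J\otimes\L+\L\otimes\J}^{w^*}\subseteq\ker(\pi_\J\otimes\pi_\J)$, and part (ii) hinges, exactly as you identify, on property $S_\sigma$ (Theorem \ref{T:ccap}) giving $\ker(\pi_\J\otimes\pi_\J)=\ol{\J\otimes\L+\L\otimes\J}^{w^*}$, after which the intertwining relation $\Delta\circ\pi_\J=(\pi_\J\otimes\pi_\J)\circ\Delta_\L$ finishes the argument directly. (Your Ces\`aro/homogeneous-polynomial reduction in part (ii) is harmless but unnecessary once the kernel identification is in hand, as you yourself note.)
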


\begin{proof}
Let $\pi_\J: \L\to \L_\J$ denote the canonical compression map 
\[\pi_\J(A)=P_{\N_\J} A|_{\N_J}.
\]
Then $\pi_\J$ is a completely contractive and weak*-continuous epimorphism. 

(i) Notice the (algebraic) tensor 
product $\pi_\J\otimes\pi_\J: \L\otimes\L\to \L_\J\otimes\L_\J$ can be extended to a completely contractive and weak*-continuous homomorphism
from 
$\L\ol\otimes\L$ to $\L_\J\ol\otimes\L_\J$, which we continue to denote by $\pi_\J\otimes\pi_\J$.  So
\begin{align*}
\pi_\J\otimes\pi_\J: \L\ol\otimes\L&\to \L_\J\ol\otimes\L_\J,\\
 A\otimes B &\mapsto P_{\N_\J} A|_{\N_\J}\otimes P_{\N_\J} B|_{\N_J}.
\end{align*}
Now we define $\Delta_\J: \L_\J\to \L_\J\ol\otimes \L_\J$ by 
\begin{align}
\label{E:int}
\Delta_\J(\pi_\J(A))=(\pi_\J\otimes\pi_\J)\circ \Delta_\L(A)\quad \mbox{for all}\quad A\in\L.  
\end{align}
The fact that the mapping $\Delta_\J$ is well-defined follows from the relations $\ker(\pi_\J)=\J$ and  
\[
\Delta_\L(\J)\subseteq\ol{\J\otimes \L+ \L\otimes \J}^{\, w^*}\subseteq \ker(\pi_\J\otimes\pi_\J).
\] 
Note that the first inclusion above follows from the fact that $\J$ is a coideal of $\L$. 

From the definition \eqref{E:int} of $\Delta_\J$, one can see that $\Delta_\J$ is a unital weak*-continuous homomorphism. 
By \cite[Proposition 2.4.1]{Pisier}, $\Delta_\J$ is also completely contractive. 

We now verify the coassociativity of $\Delta_\J$ using the coassociativity of $\Delta_\L$. 
Using \eqref{E:int}  gives
\begin{align*}
(\id\otimes\Delta_\J)\Delta_\J(\pi_\J(A))
&=(\id\otimes\Delta_\J)[(\pi_\J\otimes\pi_\J)\circ\Delta_\L](A)\\
&=[\pi_\J\otimes(\Delta_\J\circ\pi_\J)]\circ\Delta_\L(A)\\
&=[\pi_\J\otimes((\pi_\J\otimes\pi_\J)\circ\Delta_\L)]\circ\Delta_\L(A)\\
&=(\pi_\J\otimes\pi_\J\otimes\pi_\J)\circ(\id\otimes \Delta_\L)\circ\Delta_\L(A)
\end{align*}
for all $A \in \L$, and in addition
\begin{align*}
(\Delta_\J\otimes\id)\Delta_\J(\pi_\J(A))
&=(\Delta_\J\otimes\id)[(\pi_\J\otimes\pi_\J)\circ\Delta_\L](A)\\
&=[(\Delta_\J\circ\pi_\J)\otimes\pi_\J]\circ\Delta_\L(A)\\
&=[((\pi_\J\otimes\pi_\J)\circ\Delta_\L)\otimes\pi_\J]\circ\Delta_\L(A)\\
&=(\pi_\J\otimes\pi_\J\otimes\pi_\J)\circ(\Delta_\L\otimes \id)\circ\Delta_\L(A)
\end{align*}
for all $A\in\L$. Since $\Delta_\L$ is coassociative, so is $\Delta_\J$. 

(ii) Let $\Delta$ denote the canonical comultiplication on $\L_\J$. Then $\pi_\J$ intertwines $\Delta_\L$ and $\Delta$, that is
\[
\Delta\circ \pi_\J = (\pi_\J\otimes \pi_\J)\circ \Delta_{\L}.
\] 
This implies $\Delta_\L(\J)$ is contained in $\ker(\pi_\J\otimes \pi_\J)$, since $\pi_\J(\J)=0$. 
Since $\J$ is homogeneous, $\L_\J$ has property $S_\sigma$ by Theorem \ref{T:ccap}. 
Hence $\ker(\pi_\J\otimes \pi_\J)=\ol{\J\otimes \L + \L\otimes \J}^{w^*}.$
This implies  that $\J$ is a coideal of $\L$, and hence a Hopf ideal of $\L$. 
\end{proof}

\begin{rem}
It follows from the proofs of Lemma \ref{L:cc} and Theorem \ref{T:LJHopf} that the multiplication on ${(\L_\J)}_*$, which is
inherited from $\L_*$, can be specified directly using the comultiplication on ${\L_\J}$ as
\[
\varphi\star\psi=(\varphi\otimes\psi)\circ\Delta_\J, \quad \varphi, \psi\in{(\L_\J)}_*.
\] 
\end{rem}

\begin{rem} 
Part (i) of Theorem \ref{T:LJHopf} can also be proven using the functional calculus of absolutely continuous row contractions. We briefly explain the idea here. 

Let $S_i=\pi_\J(L_i)$  for $i=1,...,d$.
We wish to show that  the map taking $S_i$ to $S_i\otimes S_i$ extends to a weak*-continuous completely contractive map on $\L_\J$.

Since the $d$-tuple $(S_1\otimes S_1,...,S_d\otimes S_d)$ can be dilated to the $d$-tuple $(L_1\otimes L_1,..., L_d\otimes L_d)$,
which is actually a (higher dimensional) unilateral shift \cite{Yang12}, it is an absolutely continuous row contraction (see for example \cite{Ken12}).
Hence there is a weak*-continuous completely contractive $\L$-functional calculus $\Phi$ satisfying $\Phi(L_i) = S_i\otimes S_i$. 
It is not hard to see that 
\[
\Phi = (\pi_\J \otimes \pi_\J)\Delta_\L.
\] 
Clearly
$\J\subseteq \ker\Phi$.
Therefore, $\Phi$ factors through a weak*-continuous completely
contractive homomorphism $\Delta_\J$ taking $S_i$ to $S_i\otimes S_i$. Therefore, 
\[
\Delta_\J\circ \pi_\J=\Phi.
\] 
The remainder of the proof follows as in the proof of Part (i) of Theorem \ref{T:LJHopf}. 
\end{rem}

Recall from \cite{Yang12} the following definition of a corepresentation of a Hopf dual algebra.
\begin{defn}
\label{D:corep}
Let $(\A, \Delta_\A)$ be a Hopf dual algebra acting on a Hilbert space $\H$.
A \textit{corepresentation} of  $(\A,\Delta_\A)$ on a Hilbert space $\K$ is an operator $V\in \A\otl\B(\K)$ satisfying
\[
V_{1,3}V_{2,3}= (\Delta_\A\otimes \id)(V).
\]
Here the notation $V_{1,3}$ and $V_{2,3}$ follows the standard leg notation. In other words, the operator $V_{1,3}$ is a linear operator on the Hilbert space
$\H\otimes\H\otimes\K$, which acts as $V$ on the first and third tensor factors, and as the identity on the second one. The operator $V_{2,3}$ is defined analogously.
\end{defn}

\begin{thm}
\label{T:conv}
Let $\J$ be a homogeneous Hopf ideal of $\L$. Then
\begin{itemize}
\item[(i)] the predual ${(\L_\J)}_*$ is a Hopf convolution algebra, and
\item[(ii)] there is a one-to-one correspondence between the corepresentations of $\L_\J$
and the completely bounded representations of ${(\L_\J)}_*$.
\end{itemize}
\end{thm}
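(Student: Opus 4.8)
The plan is to treat the two parts in sequence, leaning heavily on the structural results already established. For part (i), the key input is Theorem \ref{T:ccap}: since $\J$ is homogeneous, $\L_\J$ has property $S_\sigma$, and hence by Corollary \ref{C:Fubini} we have the completely isometric identification $(\L_\J \ol\otimes \L_\J)_* \cong (\L_\J)_* \oth (\L_\J)_*$. Under this identification, the predual of the comultiplication $\Delta_\J$ constructed in Theorem \ref{T:LJHopf}(i) becomes a map $m_* : (\L_\J)_* \oth (\L_\J)_* \to (\L_\J)_*$, which I would check is precisely the multiplication $\star$ (this is essentially the content of the Remark following Theorem \ref{T:LJHopf}: $\varphi \star \psi = (\varphi \otimes \psi)\circ \Delta_\J$). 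Dualizing coassociativity of $\Delta_\J$ gives associativity of $\star$, and Lemma \ref{L:cc} already gives that $\star$ is commutative and makes $(\L_\J)_*$ a Banach algebra. It then remains to produce a comultiplication on $(\L_\J)_*$: this should be the predual of the multiplication $m : \L_\J \ol\otimes \L_\J \to \L_\J$, again passing through the $S_\sigma$/Fubini identification so that $m_* : (\L_\J)_* \to (\L_\J)_* \oth (\L_\J)_*$ lands in the right (operator projective) tensor product. One checks coassociativity of $m_*$ from associativity of $m$, and that $m_*$ is a homomorphism for $\star$ from the fact that $\Delta_\J$ is a homomorphism (equivalently, that $m$ and $\Delta_\J$ satisfy the bialgebra compatibility, which is automatic since $\Delta_\L(L_w) = L_w \otimes L_w$ descends to $\Delta_\J$ being multiplicative). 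Boundedness in the appropriate (completely bounded) sense of both $\star$ and $m_*$ follows from complete contractivity of $\Delta_\J$ and of $m$ together with the identifications from Corollary \ref{C:Fubini}.

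For part (ii), the correspondence should be set up exactly as in \cite{Yang12} for $\L$ itself. Given a corepresentation $V \in \L_\J \otl \B(\K)$ satisfying $V_{1,3} V_{2,3} = (\Delta_\J \otimes \id)(V)$, I would define a map $\rho_V : (\L_\J)_* \to \B(\K)$ by $\rho_V(\varphi) = (\varphi \otimes \id)(V)$, using the slice map associated to $\varphi \in (\L_\J)_*$; this is well-defined and completely bounded since $V \in \L_\J \otl \B(\K)$. The corepresentation identity translates, after applying $\varphi \otimes \psi \otimes \id$ to both sides and using $\varphi \star \psi = (\varphi \otimes \psi)\circ \Delta_\J$, into $\rho_V(\varphi)\rho_V(\psi) = \rho_V(\varphi \star \psi)$, so $\rho_V$ is a completely bounded representation. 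Conversely, given a completely bounded representation $\rho : (\L_\J)_* \to \B(\K)$, one recovers $V \in \L_\J \otl \B(\K)$ as the element implementing $\rho$ via the canonical identification $CB((\L_\J)_*, \B(\K)) \cong \L_\J \ol\otimes \B(\K)$ (here again property $S_\sigma$ / the operator approximation property of $(\L_\J)_*$ is what guarantees this normal tensor product identification, and that the resulting $V$ lies in $\L_\J \otl \B(\K)$ rather than a larger Fubini product), and the multiplicativity of $\rho$ forces the corepresentation identity on $V$. One then checks these two assignments $V \mapsto \rho_V$ and $\rho \mapsto V_\rho$ are mutually inverse, which is a routine diagram chase once the slice-map identities are in place.

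The main obstacle I anticipate is bookkeeping with the various tensor products and slice maps: making sure that the predual of $\Delta_\J : \L_\J \to \L_\J \ol\otimes \L_\J$ genuinely gives a \emph{bounded} (indeed completely bounded) map out of the \emph{operator projective} tensor product $(\L_\J)_* \oth (\L_\J)_*$, and dually that $m_*$ lands in that same projective tensor product rather than some coarser one — this is exactly where property $S_\sigma$ (via Corollary \ref{C:Fubini}) is indispensable, and it is the reason homogeneity of $\J$ is assumed in this theorem. For an arbitrary (non-homogeneous) Hopf ideal the argument would break precisely here, since we do not know whether $\L_\J$ has property $S_\sigma$; this is presumably the motivation for Question \ref{Q:conv}. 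The remaining verifications — coassociativity, associativity, the homomorphism properties, and the bijectivity in part (ii) — are all formal consequences of the corresponding properties for $\L$ established in \cite{Yang12}, transported along the completely contractive weak*-continuous epimorphism $\pi_\J$ and its dual, which restricts to a completely isometric embedding $(\L_\J)_* = \L_* \cap \J_\perp \hookrightarrow \L_*$ of Banach algebras.
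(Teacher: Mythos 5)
Your proposal is correct and follows essentially the same route as the paper: the paper's proof likewise reduces everything to the fact that homogeneity gives $\L_\J$ property $S_\sigma$ (Theorem \ref{T:ccap}), invokes Corollary \ref{C:Fubini} for the identification $(\L_\J\ol\otimes\L_\J)_*\cong(\L_\J)_*\oth(\L_\J)_*$, and then transports the arguments of \cite{Yang12} (Theorems 5.1 and 6.3) verbatim. You have simply written out the details that the paper delegates to that reference, and your identification of where $S_\sigma$ is indispensable (and why the non-homogeneous case is open, cf.\ Question \ref{Q:conv}) matches the paper's own discussion.
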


\begin{proof}
(i) Since $\J$ is homogeneous, $\L_\J$ has property $S_\sigma$ by Theorem \ref{T:ccap}. 
It follows from Corollary \ref{C:Fubini} that we have the following completely isometric isomorphism: 
\[
(\L_\J\ol\otimes\L_\J)_*\cong{(\L_\J)}_*\widehat\otimes{(\L_\J)}_*.
\]
The result now follows as in the proof of \cite[Theorem 5.1]{Yang12}. 

(ii) The proof of this result follows as in the proof of  \cite[Theorem 6.3]{Yang12}.
\end{proof}

In contrast to Theorem \ref{T:LJHopf}, the above result is not very satisfactory since it only applies
to homogeneous Hopf ideals. The key is that the homogeneity of $\J$ ensures $\L_\J$ has property $S_\sigma$. 
This raises the following question:

\begin{ques}
\label{Q:conv}
For an arbitrary Hopf ideal $\J$ of $\L$, is $(\L_\J)_*$ a Hopf algebra? 
\end{ques}

Notice that for any Hopf ideal $\J$, $(\L_\J)_*$ is always an algebra by Lemma \ref{L:cc}.
In order to obtain a Hopf algebra structure for $(\L_\J)_*$, it is necessary to to show that it is also a coalgebra.


\section{The spectrum of ${(\L_\J)}_*$}
\label{S:spec}

Let $\J$ be a homogeneous Hopf ideal of $\L$, and let $S$ be the corresponding semigroup as in Subsection \ref{subS:L[S]}.
We will show that the spectrum of the abelian Banach algebra $(\L_\J)_*$ consists of the nonzero elements of $S$.
In particular, the spectrum of the predual  $\L_*$  of the noncommutative analytic Toeplitz algebra is $\Fd$, and the spectrum of predual $\M_*$ of the the multiplier algebra of the Drury-Arveson space is $\bN^d$. In the non-homogeneous abelian case, it is shown that if $V$ is an analytic variety of $\bB_d$ with the property that the corresponding ideal $\J_V$ is a Hopf ideal (of $\M$), then the spectrum of $(\M_V)_*$ is $\widehat{V}$, the set of nonzero semi-characters of $V$ (which necessarily form a semigroup). 

\subsection{The homogeneous case}
We will utilize the construction of the semigroup algebra $\L[S]$ from Subsection \ref{subS:L[S]} to calculate the spectrum of $(\L_\J)_*$.
The following result follows immediately from Theorem  \ref{T:uniequ}, Theorem \ref{T:LJHopf} and Theorem \ref{T:conv}. 

\begin{thm}
\label{T:hda}
The semigroup algebra $\L[S]$ is a Hopf dual algebra and the predual ${\L[S]}_*$ is a Hopf algebra. Moreover,
there is a one-to-one correspondence between the corepresentations of $\L[S]$
and the completely bounded representations of ${\L[S]}_*$.
\end{thm}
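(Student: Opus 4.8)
The plan is to observe that Theorem \ref{T:hda} is not a new result requiring independent proof, but rather a direct translation of three earlier theorems through the unitary equivalence established in Theorem \ref{T:uniequ}. The strategy has three short steps. First, I would recall that Theorem \ref{T:uniequ} gives a unitary $U$ with $L_s = U(P_{\N_\J}L_w|_{\N_\J})U^*$ for any $w \in q^{-1}(s)$, and hence $\L[S] = U \L_\J U^*$. Since conjugation by a unitary is a weak*-homeomorphic completely isometric isomorphism, it carries all the relevant structure (the comultiplication, the predual, corepresentations, completely bounded representations of the predual) from $\L_\J$ to $\L[S]$. Second, I would invoke Theorem \ref{T:LJHopf}(i) and Theorem \ref{T:conv}(i): since $\J$ is a homogeneous Hopf ideal, $\L_\J$ is a Hopf dual algebra and $(\L_\J)_*$ is a Hopf convolution algebra; transporting along $U$ yields that $\L[S]$ is a Hopf dual algebra (with comultiplication $\Delta_{\L[S]} := (U \otl U)\, \Delta_\J\, (U^* \otl U^*)$, equivalently $\Delta_{\L[S]}(L_s) = L_s \otimes L_s$) and that ${\L[S]}_*$ is a Hopf algebra. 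Third, for the correspondence between corepresentations and completely bounded representations of the predual, I would apply Theorem \ref{T:conv}(ii) and again transport along $U$: a corepresentation $V \in \L_\J \otl \B(\K)$ corresponds to $(U \otl \id)V(U^* \otl \id) \in \L[S] \otl \B(\K)$, and the bijection with completely bounded representations of ${\L[S]}_* \cong (\L_\J)_*$ follows.

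The one point that deserves a word of care, and which I would flag explicitly, is that $\J$ as stated is only assumed to be a homogeneous \emph{ideal} in Theorem \ref{T:uniequ}, whereas Theorems \ref{T:LJHopf}(i) and \ref{T:conv} require $\J$ to be a homogeneous \emph{Hopf} ideal. This is harmless: by Proposition \ref{P:coideal}, every homogeneous ideal of $\L$ of the form arising here is automatically a coideal — more precisely, the $\J$ appearing in the construction of $\L[S]$ is the one associated to a homogeneous semigroup congruence via Theorem \ref{T:Hopfideal}, hence is genuinely a homogeneous Hopf ideal. So one should either restrict the statement of Theorem \ref{T:hda} to this setting (which is the intended one, since $S$ is assumed to come from a homogeneous congruence) or note that the semigroup algebra construction only makes sense when $\J$ is a homogeneous Hopf ideal to begin with. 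Given the phrasing in the section, I would simply state at the outset of the proof that $\J$ is a homogeneous Hopf ideal and $\L_\J \cong \L[S]$.

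I do not expect any genuine obstacle here; the theorem is explicitly stated to ``follow immediately'' from the three cited results. The only mild technical nuisance is spelling out that the unitary equivalence $\L[S] = U\L_\J U^*$ is compatible with the tensor-product structures — i.e. that $U \otl U : \L_\J \otl \L_\J \to \L[S] \otl \L[S]$ is the expected spatial isomorphism and intertwines $\Delta_\J$ with $\Delta_{\L[S]}$, so that coassociativity and the leg-notation identity defining corepresentations are preserved. This is routine functoriality of the normal spatial tensor product under unitary conjugation, and I would dispatch it in a single sentence rather than verifying it componentwise.

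\begin{proof}
As noted in Subsection \ref{subS:L[S]}, the semigroup $S$ arises from a homogeneous semigroup congruence on $\bF_d^{*0}$, and hence by Theorem \ref{T:Hopfideal} the corresponding ideal $\J$ is a homogeneous Hopf ideal of $\L$. By Theorem \ref{T:uniequ}, the unitary $U:\N_\J\to\H[S]$ constructed there satisfies $L_s=U(P_{\N_\J}L_w|_{\N_\J})U^*$ for every $s\in S$ and every $w\in q^{-1}(s)$, so that $\L[S]=U\L_\J U^*$. Conjugation by $U$ is a weak*-homeomorphic completely isometric isomorphism, and $U\otl U:\L_\J\otl\L_\J\to\L[S]\otl\L[S]$ is the induced spatial isomorphism of normal tensor products.

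By Theorem \ref{T:LJHopf}(i), $\L_\J$ is a Hopf dual algebra with comultiplication $\Delta_\J$. Setting $\Delta_{\L[S]}=(U\otl U)\,\Delta_\J\,(U^*\otl U^*)$ gives a unital weak*-continuous completely contractive homomorphism $\L[S]\to\L[S]\otl\L[S]$ which is coassociative because $\Delta_\J$ is; on generators one computes $\Delta_{\L[S]}(L_s)=L_s\otimes L_s$. Thus $\L[S]$ is a Hopf dual algebra. Since $\J$ is homogeneous, Theorem \ref{T:conv}(i) shows ${(\L_\J)}_*$ is a Hopf convolution algebra, and transporting this structure along the predual isomorphism ${(\L_\J)}_*\cong{\L[S]}_*$ induced by $U$ shows that ${\L[S]}_*$ is a Hopf algebra.

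Finally, Theorem \ref{T:conv}(ii) provides a one-to-one correspondence between the corepresentations of $\L_\J$ and the completely bounded representations of ${(\L_\J)}_*$. If $V\in\L_\J\otl\B(\K)$ is a corepresentation, then $(U\otl\id_{\B(\K)})V(U^*\otl\id_{\B(\K)})\in\L[S]\otl\B(\K)$, and since $\Delta_{\L[S]}=(U\otl U)\Delta_\J(U^*\otl U^*)$, the leg identity $V_{1,3}V_{2,3}=(\Delta_\J\otimes\id)(V)$ transforms into the corresponding identity for $\L[S]$. Hence this conjugation is a bijection between corepresentations of $\L_\J$ and of $\L[S]$, and composing it with the correspondence of Theorem \ref{T:conv}(ii) and the isomorphism ${(\L_\J)}_*\cong{\L[S]}_*$ yields the desired one-to-one correspondence between the corepresentations of $\L[S]$ and the completely bounded representations of ${\L[S]}_*$.
\end{proof}
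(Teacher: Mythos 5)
Your proposal is correct and follows exactly the route the paper intends: the paper's proof is the single observation that the result "follows immediately from Theorem \ref{T:uniequ}, Theorem \ref{T:LJHopf} and Theorem \ref{T:conv}," and you have simply (and accurately) spelled out the transport of structure along the unitary $U$. The extra care you take about $\J$ being a homogeneous Hopf ideal rather than merely a homogeneous ideal is a reasonable clarification but does not change the argument.
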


It is useful to notice that the comultiplication $\Delta_{\L[S]}$, or simply $\Delta_S$, of $\L[S]$ is determined by 
\[
\Delta_S(L_s)=L_s\otimes L_s \quad \text{for all} \quad s\in S.
\]  

\begin{rem}
One can say more in this special case:
The multiplication $m: \L[S]_*\otimes \L[S]_*\to \L[S]_*$ is surjective, and so the comultiplication $\Delta_S$ is injective. 
Indeed, suppose to the contrary that the multiplication were not surjective. Then there would be nonzero element $A\in \L[S]$ 
 such that $\langle m(\tau),A\rangle = 0$ for all $\tau \in {\L[S]}_*\otimes {\L[S]}_*$. But
 since $A\ne 0$, it has a nonzero Taylor coefficient, say the $\alpha$-th coefficient $a_\alpha\ne 0$. Let $\phi=[x_e x_\alpha^*]$.
 Then an easy computation gives $m(\phi\otimes\phi)=\phi$. However, 
 $\langle \pi(\phi\otimes \phi),A\rangle =\frac{a_\alpha}{|[\alpha]|} \ne 0$, which gives a contradiction.
\end{rem}

\begin{eg}
\label{Eg:Mk}
Fix $s\in S_0$, the set of nonzero elements of $S$. Let $\rho_s: {\L[S]}_*\to \bC$ denote the evaluation functional
\[
\rho_s(\varphi)=\varphi(L_s), \quad \varphi\in{\L[S]}_*.
\] 
Obviously, $\rho_s$ is a character of ${\L[S]}_*$. 
Moreover, it is easy to check that its corresponding corepresentation $V_{\rho_s}$ of $\L[S]$ 
in Theorem \ref{T:hda} is given by 
\[
V_{\rho_s}=L_s\otimes \id_\bC\cong L_s.
\]
\end{eg}

The following theorem greatly generalizes \cite[Theorem 6.6]{Yang12}. 

\begin{thm}
\label{T:char}
Let $\J$ be a homogeneous Hopf ideal of $\L$ with corresponding semigroup $S$. Then the spectrum of ${(\L_\J)}_*$ is $S_0$, the set of all nonzero elements of $S$. 

\end{thm}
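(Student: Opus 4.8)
The plan is to identify the spectrum (maximal ideal space) of the commutative Banach algebra $(\L_\J)_*$ with $S_0$ by producing, for each character, a corresponding nonzero element of $S$, and conversely. By Theorem \ref{T:uniequ} we may work with $\L[S]$ and its predual $\L[S]_*$ in place of $\L_\J$ and $(\L_\J)_*$; this is the natural setting since the evaluation functionals $\rho_s$ of Example \ref{Eg:Mk} already give us one direction. First I would check that the map $s \mapsto \rho_s$ is injective on $S_0$: if $\rho_s = \rho_t$ then $\varphi(L_s) = \varphi(L_t)$ for all $\varphi \in \L[S]_*$, and testing against the coefficient functionals (which weakly span the predual, as in Section \ref{S:hhideal}) forces $L_s = L_t$, hence $s = t$ in $S_0$. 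So $S_0$ embeds in the spectrum, and the content of the theorem is the reverse: every character arises this way.

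Next, given an arbitrary character $\rho$ of $\L[S]_*$, I would use the one-to-one correspondence between corepresentations of $\L[S]$ and completely bounded representations of $\L[S]_*$ from Theorem \ref{T:hda} (equivalently Theorem \ref{T:conv}(ii)); a character is a one-dimensional representation, so it corresponds to a corepresentation $V_\rho \in \L[S] \otl \B(\bC) \cong \L[S]$, i.e. a single operator $A = V_\rho \in \L[S]$. The corepresentation identity $V_{1,3}V_{2,3} = (\Delta_S \otimes \id)(V)$ collapses, in the one-dimensional case, to the group-like equation $\Delta_S(A) = A \otimes A$. So the heart of the matter is a purely structural claim: the only group-like elements of $\L[S]$ — nonzero $A$ with $\Delta_S(A) = A \otimes A$ — are exactly the generators $L_s$, $s \in S_0$. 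Writing $A \sim \sum_{s \in S_0} a_s L_s$ in its Fourier/Taylor expansion (using the Fourier series description available for these compression algebras, cf. the Remark after Theorem \ref{T:ccap}) and computing $\Delta_S(A)$ coefficientwise against $\Delta_S(L_s) = L_s \otimes L_s$, the equation $\Delta_S(A) = A \otimes A$ reads $a_s = a_s^2$ for each $s$ and $a_s a_t = 0$ for $s \ne t$ with $s,t$ in appropriate positions; since $A \ne 0$ this forces exactly one coefficient to equal $1$ and the rest to vanish, giving $A = L_s$ for a unique $s \in S_0$. Then $\rho = \rho_s$, because $\rho$ is recovered from its corepresentation.

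The step I expect to be the main obstacle is making the coefficientwise computation of $\Delta_S(A) = A \otimes A$ rigorous for a general weak*-convergent Fourier series rather than a finite sum: $\Delta_S$ is only weak*-continuous, so I cannot naively distribute it over an infinite series. The clean way around this is to apply the coefficient functionals $\varphi_{s} \otimes \varphi_{t}$ (on $\L[S] \otl \L[S]$) to both sides, which are weak*-continuous, and use $\Delta_S(L_w) = L_w \otimes L_w$ together with the Cesàro-type summability of the Fourier series (the Remark after Theorem \ref{T:ccap}) to justify termwise evaluation. One also has to be a little careful that the quotient semigroup $S$ may have zero-divisors, so $st$ can equal $\dot 0$; but then the $(s,t)$-component just does not contribute a diagonal term, and the argument that a group-like element is supported on a single $s$ is unaffected. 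A minor additional point: one should verify that $\rho_s \ne \rho_t$ translates into the corresponding corepresentations being genuinely distinct, so that the correspondence of Theorem \ref{T:hda} gives a bijection between $S_0$ and the full spectrum; this follows from the injectivity check in the first paragraph. Assembling these pieces, the spectrum of $(\L_\J)_* \cong \L[S]_*$ is exactly $S_0$.
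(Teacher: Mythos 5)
Your proposal is correct and follows essentially the same route as the paper: reduce to $\L[S]$ via Theorem \ref{T:uniequ}, observe that the evaluation functionals $\rho_s$ give $S_0\subseteq\Sigma_{\L[S]_*}$, show a character corresponds to a nonzero $F\in\L[S]$ with $\Delta_S(F)=F\otimes F$, and then pin down such semigroup-like elements as exactly the $L_s$ by pairing both sides against $x_e\otimes x_e$ and $x_{t_1}\otimes x_{t_2}$ to force $a_t\in\{0,1\}$ on the diagonal and $a_{t_1}a_{t_2}=0$ off it. The only cosmetic difference is that you reach the group-like equation through the corepresentation correspondence of Theorem \ref{T:hda}, whereas the paper unwinds it directly from the definition of the convolution product; your handling of the weak*-convergence issue via coefficient functionals and Ces\`aro summability is exactly what the paper does.
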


\begin{proof}
Let $\Sigma_{\L[S]_*}$ denote the spectrum of $\L[S]_*$.  Then we want to prove that $\Sigma_{\L[S]_*}=S_0$. By Example \ref{Eg:Mk} above,  one has 
$S_0\subseteq \Sigma_{{\L[S]}_*}$. 
In order to prove the reverse inclusion, let $F\in\L[S]$ be an element of $\Sigma_{{\L[S]}_*}$. 
Then we have the following implications:
\begin{align}
\nonumber
&\langle \varphi\star\psi, F\rangle=\langle \varphi, F\rangle \langle\psi, F\rangle,
\quad \varphi, \psi \in  {{\L[S]}_*},\\
\nonumber
\Longleftrightarrow \ &
\langle (\varphi\otimes\psi), \Delta_{S}(F)\rangle=\langle \varphi\otimes \psi, F\otimes F\rangle,
\quad \varphi, \psi \in  {{\L[S]}_*},\\
\label{E:FFF} 
\Longleftrightarrow \ & \Delta_{S}(F)=F\otimes F.
\end{align}

It follows from \eqref{E:cesaro}, \eqref{E:wlim} and Theorem \ref{T:uniequ}  that every element in $\L[S]$ can be weak* approximated by the Ces\`aro sums of its Fourier series. Write the Fourier series for $F$ as
\[
F\sim\sum_{s\in S_0}a_s L_s.
\]
Then from \eqref{E:FFF} one has 
\begin{align}
\label{E:AW}
\sum_s a_s L_s\otimes \sum_s a_s L_s\ \sim\ \sum_s a_s(L_s\otimes L_s)\in\L[S]\, \ol\otimes\, \L[S].
\end{align}
Equating the $(t, t)$-th Fourier coefficients of each side of \eqref{E:AW} gives
\begin{align*}
&\left( \left(\sum_s a_s L_s\otimes \sum_s a_s L_s\right)(x_e\otimes x_e), x_t\otimes x_t\right)_S\\
&\qquad =\left(\sum_s a_s(L_s \otimes L_s)(x_e\otimes x_e), x_t\otimes x_t\right)_S,
\end{align*}
where $e$ denotes the identity of $S$. 
This implies $a_t=0$ or $1$ for every $t \in S_0$.

There must be at least one $s\in S_0$ such that $a_s\ne 0$, since $0\ne F$, and $F$ is an element of $\Sigma_{{\L[S]}_*}$. 
Suppose there are $t_1, t_2 \in S$ such that $t_1 \ne t_2$ and both $a_{t_1}$ and $a_{t_2}$ are non-zero. Then from above 
$a_{t_1}=a_{t_2}=1$.
Taking the $(t_1, t_2)$-th Fourier coefficients of each side of \eqref{E:AW}, we have
\begin{align*}
a_{t_1} a_{t_2}\frac{1}{|[t_1]||[t_2]|} &=\left( \left(\sum_s a_s L_s\otimes \sum_s a_s L_s \right)(x_e\otimes x_e), x_{t_1}\otimes x_{t_2}\right)_S\\
&=\left(\sum_s a_s(L_s\otimes L_s)(x_e\otimes x_e), x_{t_1}\otimes x_{t_2}\right)_S\\
&=\frac{a_{t_1}}{|[t_1]|^2} \delta_{t_1,t_2}\\
&=0.
\end{align*}
Obviously, this gives a contradiction. Thus $F=L_s$ for some $s\in S_0$, and hence 
$\Sigma_{{\L[S]}_*}\subseteq S_0$.  We conclude that  $\Sigma_{{\L[S]}_*} = S_0$.
\end{proof}

\begin{rem}
\label{R:groupele}
Motivated by the classical case, we will call an element $F\in \L[S]$ satisfying \eqref{E:FFF} a \textit{semigroup-like element}.
It follows from the above proof that
the set of semigroup elements of $\L[S]$ is precisely $S$. So, by Theorem \ref{T:char}, the spectrum of $\L[S]_*$ consists of all 
nonzero semigroup-like elements of $\L[S]$. 
\end{rem}

The following corollary is immediate. The spectrum of $\L_*$ is obtained in \cite[Theorem 6.6]{Yang12}. 

\begin{cor}
\label{C:char}
The spectrum of the predual $\L_*$ of the noncommutative analytic Toeplitz algebra is $\Fd$, and the spectrum of the predual $\M_*$ of the multiplier alebra of the Drury-Arveson space is  $\bN^d$.
\end{cor}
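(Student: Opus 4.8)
The plan is to deduce Corollary \ref{C:char} directly from Theorem \ref{T:char} by identifying the semigroups that arise from the two relevant ideals. Recall from Example \ref{Eg:Hopfideal} that the zero ideal $\J=\{0\}$ corresponds to the congruence in which $u\sim v$ only when $u=v$ (together with $L_0:=0$), so that $S\cong\bF_d^{*0}$ and $S_0\cong\Fd$; similarly, the commutator ideal $\J=\C$ corresponds to the congruence in which $u\sim v$ exactly when $u$ and $v$ are permutations of one another, so that $S\cong\bN^{d,0}$ and $S_0\cong\bN^d$. Both $\{0\}$ and $\C$ are homogeneous (being generated by homogeneous polynomials in the generators), and both are Hopf ideals: this is immediate from Proposition \ref{P:coideal} and Theorem \ref{T:Hopfideal}, since the defining relations $L_u-L_v$ arise from a homogeneous semigroup congruence.

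First I would observe that $\L_{\{0\}}=\L$ and $\L_\C=\M$. The first is clear since $\N_{\{0\}}=\ell^2(\Fd)$. The second follows from the discussion in Subsection 2.2 of the preliminaries: $\N_\C=\H_d^2$ is the Drury-Arveson space, and $\M=\M_d=P_{\H_d^2}\L|_{\H_d^2}=\L_\C$. Hence $(\L_\C)_*=\M_*$. Then I would simply apply Theorem \ref{T:char} with $\J=\{0\}$ to conclude that the spectrum of $\L_*$ is $S_0\cong\Fd$, and with $\J=\C$ to conclude that the spectrum of $\M_*$ is $S_0\cong\bN^d$.

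Since everything reduces to invoking Theorem \ref{T:char} after pinning down the semigroups, there is essentially no obstacle here; the only point requiring a word of care is the identification of the congruence associated with $\C$, i.e. verifying that $L_u-L_v\in\C$ precisely when $v$ is a rearrangement of $u$. One direction is obvious since $\C$ contains all commutators $L_iL_j-L_jL_i$ and these generate, upon multiplication, all differences $L_u-L_v$ with $v$ a permutation of $u$; the converse follows because, modulo $\C$, the algebra $\L$ becomes the multiplier algebra $\M$ of $\H_d^2$, in which $S_w\mapsto z^w$ and $z^u=z^v$ iff $u,v$ have the same multidegree. Thus the proof is a short citation of Example \ref{Eg:Hopfideal}(ii),(iii) together with Theorem \ref{T:char}.

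\begin{proof}
By Example \ref{Eg:Hopfideal}, the ideals $\{0\}$ and $\C$ are homogeneous Hopf ideals of $\L$ whose corresponding semigroups are $S\cong\mathbb{F}_d^{*0}$ and $S\cong\mathbb{N}^{d,0}$, respectively, so that in the two cases $S_0\cong\Fd$ and $S_0\cong\bN^d$. Moreover $\L_{\{0\}}=\L$, since $\N_{\{0\}}=\ell^2(\Fd)$, and $\L_\C=P_{\H_d^2}\L|_{\H_d^2}=\M$ by the description of the Drury-Arveson space and its multiplier algebra given in Subsection \ref{S:pre}. Hence $(\L_{\{0\}})_*=\L_*$ and $(\L_\C)_*=\M_*$. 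The conclusion now follows immediately from Theorem \ref{T:char}.
\end{proof}
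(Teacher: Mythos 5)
Your proof is correct and follows the same route as the paper, which treats the corollary as an immediate consequence of Theorem \ref{T:char} applied to the homogeneous Hopf ideals $\J=\{0\}$ and $\J=\C$ with semigroups $\bF_d^{*0}$ and $\bN^{d,0}$ as in Example \ref{Eg:Hopfideal}. Your extra verification of the congruence associated with $\C$ is a reasonable elaboration of details the paper leaves implicit.
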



\subsection{The non-homogeneous case}
In this subsection, we consider the case of a commutative Hopf ideal $\J$  that is not necessarily homogeneous, assuming that the algebra $\L_\J$ is commutative. In this case, note that $\L_\J$ is a quotient of the multiplier algebra of the Drury-Arveson space.

Let $V\subset\bB_d$ and  $F\subset\H_d^2$. As in \cite{DavRamSha2}, we define  
\begin{align*}
\J_V&=\{f\in\M: f(\lambda)=0 \quad \forall \lambda\in V\}, \\
V(F)&=\{\lambda\in\bB_d: f(\lambda)=0 \quad \forall f\in F\}.
\end{align*}

The ball $\bB_d$ is a semigroup with the multiplication defined by component-wise multiplication: 
\[
\lambda*\mu=(\lambda_1\mu_1,...,\lambda_d\mu_d),
\]
for $\lambda=(\lambda_1,...,\lambda_d)$, $\mu=(\mu_1,...,\mu_d)$ in $\bB_d$
(see \cite{Yang12}).

\begin{lem}
\label{L:cosemi}
If $\J$ is a coideal of $\M$, the analytic variety $V(\J)$ is a sub-semigroup of $\bB_d$.
\end{lem}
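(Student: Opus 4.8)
I want to show that $V=V(\J)$ is closed under the coordinatewise product $*$ on $\bB_d$. The natural strategy is to pair the coideal condition on $\J$ with the characters $\varphi_\lambda$ of $\M$, exploiting the fact (recorded in the preliminaries) that the $\wot$-continuous characters of $\L$ — and hence of $\M$ — are exactly the evaluation functionals $\varphi_\lambda$ for $\lambda\in\bB_d$, and the key identity from \cite{Yang12} that $\Delta_\L(L_w)=L_w\otimes L_w$ descends to $\M$, so that on $\M$ one has $\varphi_\lambda\otimes\varphi_\mu\circ\Delta = \varphi_{\lambda*\mu}$. First I would recall that $\lambda\in V$ precisely when $\varphi_\lambda(f)=0$ for every $f\in\J$, i.e. $\J\subseteq\ker\varphi_\lambda$; equivalently $\varphi_\lambda\in\J_\perp$.

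**Main step.** Fix $\lambda,\mu\in V$, so $\varphi_\lambda,\varphi_\mu\in\J_\perp$. I want $\lambda*\mu\in V$, i.e. $\varphi_{\lambda*\mu}\in\J_\perp$. The point is that $\varphi_{\lambda*\mu}=(\varphi_\lambda\otimes\varphi_\mu)\circ\Delta_\M$: this can be checked on the generators/Fourier series, since $\Delta_\M(S_w)=S_w\otimes S_w$ (where $S_i=P_{\H_d^2}L_i|_{\H_d^2}$) gives $(\varphi_\lambda\otimes\varphi_\mu)\Delta_\M(S_w)=\varphi_\lambda(S_w)\varphi_\mu(S_w)=w(\lambda)w(\mu)=w(\lambda*\mu)=\varphi_{\lambda*\mu}(S_w)$, and both sides are $\wot$-continuous, hence agree on all of $\M$ by the Ces\`aro approximation of Fourier series. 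Now take any $f\in\J$. Since $\J$ is a coideal, $\Delta_\M(f)\in\ol{\J\otimes\M+\M\otimes\J}^{w^*}$. Applying the $\wot$-continuous functional $\varphi_\lambda\otimes\varphi_\mu$ and using that $\varphi_\lambda$ kills $\J$ on the left factor and $\varphi_\mu$ kills $\J$ on the right factor, we get $(\varphi_\lambda\otimes\varphi_\mu)\Delta_\M(f)=0$. Hence $\varphi_{\lambda*\mu}(f)=0$ for all $f\in\J$, i.e. $\lambda*\mu\in V$.

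**The obstacle.** The step requiring the most care is the interchange of the weak* closure with the functional $\varphi_\lambda\otimes\varphi_\mu$: one must know that $\varphi_\lambda\otimes\varphi_\mu$ is weak*-continuous on $\M\ol\otimes\M$ and annihilates $\J\otimes\M+\M\otimes\J$ (the latter is immediate on the algebraic tensor product and passes to the weak* closure by continuity). Since $\varphi_\lambda$ and $\varphi_\mu$ are vector functionals $[\nu_\lambda\nu_\lambda^*]$, $[\nu_\mu\nu_\mu^*]$ (rescaled), their tensor product is again a vector functional on $\M\ol\otimes\M$, hence weak*-continuous, so this is routine but should be stated. The remaining bookkeeping — that $V$ contains the identity $(1,\dots,1)$? no, that lies on the boundary, so $V$ need only be a sub-semigroup without an a priori unit, and indeed the lemma only claims "sub-semigroup" — requires no further argument. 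I would also remark that associativity and commutativity of $*$ on $\bB_d$ are inherited from $\bC^d$, so nothing extra is needed there.
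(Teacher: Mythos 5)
Your proposal is correct and follows essentially the same route as the paper: the paper observes that the coideal condition makes $\J_\perp$ a convolution subalgebra of $\M_*$, notes $\varphi_\lambda\star\varphi_\mu=\varphi_{\lambda*\mu}$, and concludes $\varphi_{\lambda*\mu}\in\J_\perp$; you have simply unpacked those two steps (the annihilation of $\ol{\J\otimes\M+\M\otimes\J}^{w^*}$ and the Fourier-coefficient verification of the convolution identity) in full detail.
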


\begin{proof}
Suppose $\J$ is a coideal of $\M$. Then as above, the annihilator $\J_\perp$ is a subalgebra of $\M_*$. 
Suppose $\lambda$ and $\mu$ belong to $V(\J)$.
Then $\varphi_\lambda$ and $\varphi_\mu$ belong to $\J_\perp$, and hence so does their convolution 
$\varphi_\lambda\star \varphi_\mu$, which (as one can easily check) is equal to $\varphi_{\lambda * \mu}$.
Hence $\lambda * \mu$ belongs to $V(\J)$. So $V(\J)$ is a sub-semigroup of $\bB_d$.
\end{proof}

The above result  immediately implies the following corollary.

\begin{cor}
\label{C:semi}
Let $V$ be an analytic variety of $\bB_d$ such that $\J_V$ is a coideal. Then $V$ is a semigroup
and ${(\M_V)}_*$ is an abelian Banach algebra. 
\end{cor}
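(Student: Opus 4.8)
\textbf{Proof proposal for Corollary \ref{C:semi}.}

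The plan is to deduce everything from Lemma \ref{L:cosemi} together with the observation that $V(\J_V) = V$ for an analytic variety $V$. First I would note that since $V$ is by definition the common zero set of a family of functions in $\H_d^2$ (or equivalently in $\M$, after multiplying by a suitable multiplier to ensure membership), and since $\J_V = \{f \in \M : f(\lambda) = 0 \ \forall \lambda \in V\}$ contains this defining family, we have $V(\J_V) \subseteq V$; the reverse inclusion is immediate from the definition of $\J_V$. Hence $V(\J_V) = V$. Applying Lemma \ref{L:cosemi} to the coideal $\J = \J_V$ then gives that $V = V(\J_V)$ is a sub-semigroup of $\bB_d$ under componentwise multiplication, which is the first assertion.

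For the second assertion, the argument is the same as in the proof of Lemma \ref{L:cc}: since $\J_V$ is a coideal of $\M$, its preannihilator $(\J_V)_\perp$ is a subalgebra of $\M_*$ (for $\varphi, \psi \in (\J_V)_\perp$ and $A \in \J_V$ one has $\varphi \star \psi(A) = (\varphi \otimes \psi)(\Delta_\M(A)) = 0$ by the coideal property). By \cite[Proposition 2.4.3 or the analogue of Theorem 5.1]{Yang12} — more precisely, by the identification $(\M_V)_* \cong \M_* \cap (\J_V)_\perp$ coming from the weak* homeomorphic completely isometric isomorphism $\M/\J_V \cong \M_V$ of \cite{DavRamSha2} — the space $(\M_V)_*$ is a closed subalgebra of the abelian Banach algebra $\M_*$, hence is itself an abelian Banach algebra. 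Since $\M = \M_d$ corresponds (via Example \ref{Eg:Hopfideal}(iii)) to the commutative semigroup $\bN^{d,0}$, its predual $\M_*$ is abelian by Theorem \ref{T:conv}(i) and the commutativity statement already recorded in Corollary after Theorem \ref{T:ccap}; alternatively this is \cite[Theorem 5.1]{Yang12}.

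I do not expect any serious obstacle here — the corollary is essentially a repackaging of Lemma \ref{L:cosemi} plus the standard fact that a weak*-closed coideal has a subalgebra preannihilator. The only mild point requiring care is confirming $V(\J_V) = V$, i.e.\ that $V$ is recovered as the zero set of the full ideal $\J_V$ and not merely of the original defining family; this holds precisely because $V$ was assumed to be an analytic variety (a closed-under-the-operation zero set), so no strictly larger set can be cut out by adding more functions. With that in hand the statement follows in two lines.
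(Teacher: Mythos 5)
Your proposal is correct and follows essentially the same route as the paper: Lemma \ref{L:cosemi} combined with the identity $V = V(\J_V)$ gives the semigroup claim, and the second claim is exactly the argument of Lemma \ref{L:cc} applied to the Hopf ideal $\J_V$ of $\M$ (it is automatically an ideal by definition, hence a Hopf ideal once it is a coideal). The one place where your write-up is thinner than a proof is the identity $V(\J_V)=V$: the assertion that the defining family of $V$, which a priori lives in $\H_d^2$, can be replaced by multipliers "after multiplying by a suitable multiplier" is precisely the nontrivial content here, and the paper simply cites \cite[Proposition 2.1]{DavRamSha2} for it rather than arguing it from scratch; you correctly flag this as the delicate point, but your sketch does not actually establish it.
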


\begin{proof}
The fact that $V$ is a semigroup follows from Lemma \ref{L:cosemi} and the identity $V=V(\J_V)$ is from \cite[Proposition 2.1]{DavRamSha2}.
Notice that $\J_V$ is a Hopf ideal, since $\J_V$ is an ideal. Applying Lemma \ref{L:cc}
finishes the proof. 
\end{proof}

In what follows, we describe the spectrum of ${(\M_V)}_*$ when $\J_V$ is a not necessarily homogeneous. In this case, 
the spectrum of ${(\M_V)}_*$ coincides with the set of nonzero semi-characters of $V$. 
 
Recall that $\gamma$ is a \textit{semi-character} of a semigroup $G$ if $\gamma$ is a multiplicative mapping from $G$ to the closed unit disk $\ol\bD$.
We write $\widehat{G}$ to denote the set of all nonzero semi-characters of $G$. 

\begin{prop}
\label{P:semichar}
Let $V$ be an analytic variety such that $\J_V$ is a coideal.  Then the spectrum of $(\M_V)_*$ is $\widehat{V}$.  
 \end{prop}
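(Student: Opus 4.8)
The plan is to identify the spectrum of $(\M_V)_*$ by transferring the problem, via the completely isometric isomorphism $\M_V \cong \M/\J_V$ and duality, to a statement about semigroup-like (or rather, character-like) elements of $\M_V$, just as in the homogeneous case (Theorem \ref{T:char} and Remark \ref{R:groupele}). Concretely, a character $F$ of $(\M_V)_*$ corresponds to a nonzero element $F \in \M_V$ satisfying $\Delta_{\J_V}(F) = F \otimes F$, by the same chain of equivalences displayed in \eqref{E:FFF}. So the proposition reduces to showing that the nonzero character-like elements of $\M_V$ are precisely the semi-characters of $V$, under the identification of $\M_V$ with a space of functions on $V$ coming from $\M_V = \{f|_V : f \in \M\}$.

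First I would recall that $\M = \M_d$ is the multiplier algebra of the Drury-Arveson space, so elements of $\M$ are genuine holomorphic functions on $\bB_d$; hence elements of $\M_V$ are functions on $V$. Next, I would record that the comultiplication $\Delta_\M$ on $\M$ (inherited from $\L$ via the commutator quotient, as in Example \ref{Eg:Hopfideal}(iii)) is dual to the pointwise multiplication $*$ on $\bB_d$ used in Lemma \ref{L:cosemi}; equivalently, $\Delta_\M(f)$ is the multiplier on $\H_d^2 \otimes \H_d^2$ whose value at $(\lambda, \mu)$ is $f(\lambda * \mu)$. This is the function-theoretic content of the identity $\varphi_\lambda \star \varphi_\mu = \varphi_{\lambda * \mu}$ already used in the proof of Lemma \ref{L:cosemi}. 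Passing to the quotient $\M_V$, the comultiplication $\Delta_{\J_V}$ satisfies $\Delta_{\J_V}(f|_V)(\lambda,\mu) = f(\lambda * \mu)$ for $\lambda, \mu \in V$ (well-defined since $V$ is a semigroup by Corollary \ref{C:semi}, so $\lambda * \mu \in V$). Therefore the equation $\Delta_{\J_V}(F) = F \otimes F$ becomes exactly $F(\lambda * \mu) = F(\lambda) F(\mu)$ for all $\lambda, \mu \in V$, i.e. $F$ is multiplicative on $V$.

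It then remains to check two boundedness matters. On one hand, any nonzero character-like $F$ is a multiplier of norm one (characters of a Banach algebra predual have norm one, and $\|F\|_{\M_V} = \|F\|_{\text{mult}}$), so in particular $\sup_{\lambda \in V} |F(\lambda)| \leq 1$; being a nonzero multiplicative function bounded by $1$, $F$ is a nonzero semi-character of $V$. Thus the spectrum embeds into $\widehat{V}$. On the other hand, given a nonzero semi-character $\gamma \in \widehat{V}$, I need to know $\gamma$ extends to (or already is) a multiplier of $\F_V$ of norm $\le 1$; this should follow from the complete Nevanlinna-Pick property of $\H_d^2$ and the corresponding property of $\F_V$ (each $\lambda \mapsto$ evaluation gives a kernel, and a bounded multiplicative function on a semigroup of points in $\bB_d$ is bounded hence a contractive multiplier of the associated NP space, by the characterization of multipliers via positivity of the Pick kernel) — alternatively one can cite \cite{DavRamSha2} that $\M_V = \{f|_V : f \in \M\}$ together with the known description of $\widehat{\bB_d}$ from \cite{Yang12}, restricting semi-characters of $\bB_d$ to $V$. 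Then $\gamma$ is a nonzero character-like element of $\M_V$, giving the reverse inclusion $\widehat V \subseteq \Sigma_{(\M_V)_*}$ and hence equality.

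I expect the main obstacle to be precisely this last point: verifying that an abstract semi-character of the point-semigroup $V$ is automatically realized by a (contractive) multiplier of $\F_V$, rather than merely by a bounded function. Unlike the homogeneous case, where the Fourier/Taylor series machinery of $\L[S]$ gave a completely transparent description of character-like elements, here one must invoke the reproducing-kernel and Nevanlinna-Pick structure of $\F_V$ (via \cite{DavRamSha2, DavPit98, AM00}) to pass from "bounded multiplicative function on $V$" to "element of $\M_V$". The converse direction (spectrum $\subseteq \widehat V$) and the translation of $\Delta_{\J_V}(F) = F\otimes F$ into multiplicativity on $V$ are routine given the earlier results.
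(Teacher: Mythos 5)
Your route differs from the paper's. The paper does not pass through group\mbox{-}like elements at all here: it observes that $(\M_V)_*$ is generated by the semigroup $G=\{\varphi_\lambda : \lambda\in V\}\cong V$, so a character of $(\M_V)_*$ restricts to a multiplicative, automatically contractive map on $G$, giving $\Sigma_{(\M_V)_*}\subseteq\widehat{V}$; for the converse it extends a semi-character of $G$ to $(\M_V)_*$ ``by linearity.'' Your forward direction --- a character of the predual is a nonzero $F\in\M_V=((\M_V)_*)^*$ with $\Delta_{\J_V}(F)=F\otimes F$, which, tested against the functionals $\varphi_\lambda\otimes\varphi_\mu$, becomes $F(\lambda*\mu)=F(\lambda)F(\mu)$, together with the automatic norm bound --- is sound (the elementary tensor functionals do separate points of the normal tensor product, and the $\varphi_\lambda$ span a dense subspace of $(\M_V)_*$), and it even yields slightly more than the paper records, namely that every character is implemented by a contractive multiplier. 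So up to that point the two arguments are morally parallel, yours being the \eqref{E:FFF}-style version.

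The reverse inclusion is where your argument has a genuine gap, and the proposed fix does not close it. A semi-character of $V$ is merely a multiplicative map $V\to\ol\bD$; nothing forces it to be holomorphic, let alone a multiplier. Take $V=\bB_d$ (so $\J_V=\{0\}$ is trivially a coideal): $\gamma(\lambda)=|\lambda_1|$ is a nonzero semi-character of $(\bB_d,*)$, but it is not the restriction of any element of $\M$, hence cannot come from a (necessarily bounded, hence $\M_V$-represented) character of $(\M_V)_*$. So the assertion that ``a bounded multiplicative function on $V$ is a contractive multiplier of $\F_V$ by the Nevanlinna--Pick property'' is simply false, and no kernel-positivity argument will manufacture the required element of $\M_V$. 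You have correctly located the crux --- and, to be fair, the paper's own converse step extends $\gamma$ off $\spn\{\varphi_\lambda\}$ ``by linearity'' without verifying boundedness of that extension, which is exactly the same issue --- but your proposal does not resolve it: as stated, with $\widehat{V}$ meaning all nonzero semi-characters, the inclusion $\widehat{V}\subseteq\Sigma_{(\M_V)_*}$ needs either an extra hypothesis on $\gamma$ (e.g.\ that it is realized by a multiplier) or a restricted reading of $\widehat{V}$.
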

 
\begin{proof} 
By Corollary \ref{C:semi}, $V$ is a semigroup and ${(\M_V)}_*$ is a subalgebra of $\M_*$.  
Consider the set $G:=\{\varphi_\lambda: \lambda \in V\}$ (i.e., the set of the generators of ${(\M_V)}_*$).
Then $G$ is a semigroup with multiplication given by $\varphi_\lambda\star\varphi_\mu=\varphi_{\lambda*\mu}$.
Clearly the semigroups $G$ and $V$ are isomorphic.

Let $\gamma\in\Sigma_{{(\M_V)}_*}$. 
Then the restriction, $\gamma|_G$, of $\gamma$ to $G$ is multiplicative, 
so it gives rise to a multiplicative mapping on $V$. It suffices to show $\gamma|_G(V)\subseteq \ol\bD$.
But since $\gamma$ is an element of the spectrum of ${(\M_V)}_*$, it is necessarily contractive. Thus $|\gamma(\phi_\lambda)|\le 1$, as desired.
           
           Conversely, let $\gamma\in \widehat{V}=\widehat{G}$. Extending $\gamma$ by linearity  to ${(\M_V)}_*$ gives an element of $\Sigma_{{(\M_V)}_*}$ satisfying
\begin{align*}
\gamma\left(\sum_s a_s \varphi_s \star \sum_t b_t\varphi_t\right)
             &=\gamma\left(\sum_{s,t} a_s b_t (\varphi_s\star\varphi_t)\right) \\  
             &=\sum_{s,t} a_s b_t \gamma(\varphi_s\star\varphi_t)  \\
             &=\sum_{s,t} a_s b_t \gamma(\varphi_s)\gamma(\varphi_t) \\
             &=\gamma\left(\sum_{s} a_s\varphi_s\right)\gamma\left(\sum_{t} b_t\varphi_t\right),
\end{align*}
where the last second equality follows form the fact that $\gamma$ is multiplicative on $V$.
\end{proof} 

By the above proposition, a nonzero element $F \in \M_V$ belongs to the spectrum of ${(\M_V)}_*$ if and only if  $F(V)\subseteq\ol\bD$ and $F$ satisfies the equation 
\[
F(\lambda*\mu)=F(\lambda)F(\mu), \quad \forall \lambda, \mu \in V.       
\]

Let $V$ be an analytic variety such that $\J_V$ is a homogeneous Hopf ideal of $\M$. Then there are two 
semigroups naturally associated to $\J_V$:  the variety $V$ itself, and the semigroup $S_V$ from Subsection \ref{subS:L[S]}. 
The following corollary summarizes the relationship between them.

\begin{cor}
Let $V$ and $S_V$ be as above. Then 
 $\widehat{V}=(S_V)_0$. 
\end{cor}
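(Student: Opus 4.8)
The plan is to combine Theorem~\ref{T:char} with Proposition~\ref{P:semichar}. Let $V$ be an analytic variety such that $\J_V$ is a homogeneous Hopf ideal of $\M$, and let $S_V$ be the semigroup attached to $\J_V$ via Subsection~\ref{subS:L[S]}, i.e. the quotient of $\bF_d^{*0}$ (viewed inside the commutative setting, so really a quotient of $\bN^{d,0}$) by the homogeneous congruence determined by $\J_V$. On the one hand, Theorem~\ref{T:char} identifies the spectrum of $(\M_V)_*$ (which equals $(\L_{\J_V})_*$ up to the identifications already in place) with $(S_V)_0$, the set of nonzero elements of $S_V$. On the other hand, since $\J_V$ is in particular a coideal of $\M$, Proposition~\ref{P:semichar} identifies the spectrum of $(\M_V)_*$ with $\wh V$, the set of nonzero semi-characters of $V$. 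Equating the two descriptions of the same spectrum gives $\wh V = (S_V)_0$, which is exactly the claim.

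Concretely, I would carry out the following steps. First, record that $\J_V$ being an ideal makes it a Hopf ideal (as noted in the proof of Corollary~\ref{C:semi}), and that homogeneity is assumed; hence both Theorem~\ref{T:char} and Proposition~\ref{P:semichar} apply to $\J_V$. Second, note that under the completely isometric isomorphism $\M_V \cong \M/\J_V = \L_{\J_V}$ (the commutator ideal $\C$ is contained in $\J_V$ because $\M$ is commutative), the preduals $(\M_V)_*$ and $(\L_{\J_V})_*$ are isometrically isomorphic as Banach algebras, so they have the same spectrum $\Sigma$. Third, apply Theorem~\ref{T:char}: $\Sigma = (S_V)_0$. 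Fourth, apply Proposition~\ref{P:semichar}: $\Sigma = \wh V$. Chaining these equalities yields $\wh V = (S_V)_0$.

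There is essentially no hard analytic content here; the proof is a one-line bookkeeping argument once the two earlier theorems are in hand. The only point requiring a word of care is the compatibility of the two identifications of the spectrum: one must be sure that the spectrum computed in Theorem~\ref{T:char} via the semigroup algebra $\L[S_V]$ and the spectrum computed in Proposition~\ref{P:semichar} via evaluation functionals $\varphi_\lambda$ on the variety genuinely refer to the same object, namely the Gelfand spectrum of the one commutative Banach algebra $(\M_V)_* \cong (\L_{\J_V})_*$. Since both are phrased intrinsically in terms of this algebra, this is automatic, but it is worth one sentence in the write-up.

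\begin{proof}
Since $\M$ is commutative, the commutator ideal $\C$ is contained in $\J_V$, so $\L_{\J_V}$ is a quotient of $\M$, and the completely isometric isomorphism $\M_V \cong \M/\J_V = \L_{\J_V}$ induces an isometric isomorphism of Banach algebras $(\M_V)_* \cong (\L_{\J_V})_*$. Hence these two algebras have the same spectrum $\Sigma$. Because $\J_V$ is a homogeneous Hopf ideal of $\M$, Theorem~\ref{T:char} applies and gives $\Sigma = (S_V)_0$. On the other hand, $\J_V$ is a coideal of $\M$, so Proposition~\ref{P:semichar} applies and gives $\Sigma = \wh V$. Combining these identifications yields $\wh V = (S_V)_0$.
\end{proof}
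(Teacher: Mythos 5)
Your proof is correct and is exactly the argument the paper gives: both identify the Gelfand spectrum of $(\M_V)_*\cong(\L_{\J_V})_*$ once via Theorem~\ref{T:char} as $(S_V)_0$ and once via Proposition~\ref{P:semichar} as $\widehat{V}$, then equate the two. Your write-up simply makes explicit the bookkeeping that the paper leaves to the reader.
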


\begin{proof}
This  follows immediately from Theorem \ref{T:char} and Proposition \ref{P:semichar}. 
\end{proof}

\section{Automorphisms of $\L_\J$ and ${(\L_\J)}_*$ }
\label{S:aut}

Let $\J$ be a homogeneous Hopf ideal of $\L$, and $S$ be the corresponding semigroup.
From Section \ref{S:LJ}, $\L_\J$ and $(\L_\J)_*$ are both Hopf algebras.  
In this section, we characterize the Hopf algebra automorphisms of $\L_\J$ and $(\L_\J)_*$.
As one may expect, there is a nice relationship between these two types of automorphisms.
Namely, $\theta$ is a Hopf algebra automorphism of $(\L_\J)_*$ if and only if its adjoint  
$\theta^*$ is a Hopf algebra automorphism of $\L_\J$.  
Furthermore, it turns out that these automorphisms are completely determined by semigroup automorphisms of $S$.

Throughout this section, $\J$ is assumed to be a homogeneous Hopf ideal of $\L$ with corresponding semigroup $S$.
We require the following automatic continuity result. 

\begin{lem}
\label{L:bdd}
Every endomorphism of ${\L[S]}_*$ is bounded. 
\end{lem}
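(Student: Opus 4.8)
The statement is an automatic continuity result: every algebra endomorphism $\theta\colon {\L[S]}_*\to{\L[S]}_*$ is norm-bounded. Since $\L_\J$ and $\L[S]$ are unitarily equivalent (Theorem \ref{T:uniequ}), I work with ${\L[S]}_*$ directly. The key structural fact is that ${\L[S]}_*$ is a commutative Banach algebra (Lemma \ref{L:cc}) whose spectrum is $S_0$ (Theorem \ref{T:char}), with the evaluation functionals $\rho_s(\varphi)=\varphi(L_s)$, $s\in S_0$, being \emph{all} the characters. The plan is to use the standard separating-space argument for automatic continuity of homomorphisms into a commutative (semisimple) Banach algebra.

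\textbf{Key steps.} First, recall that ${\L[S]}_*$ is semisimple: its Jacobson radical is the intersection of kernels of the characters $\rho_s$, $s\in S_0$, and since every element $F\in\L[S]$ is determined by its Fourier coefficients $a_s=F(L_s)/\text{(normalization)}$ (by the Ces\`aro approximation, cf. \eqref{E:cesaro}, \eqref{E:wlim}), a functional $\varphi\in{\L[S]}_*$ annihilated by all $\rho_s$ is zero. So $\bigcap_{s\in S_0}\ker\rho_s=\{0\}$. Second, introduce the separating space
\[
\mathfrak{S}(\theta)=\{\psi\in{\L[S]}_*: \exists\, \varphi_n\to 0 \text{ with } \theta(\varphi_n)\to\psi\}.
\]
By the closed graph theorem, $\theta$ is bounded if and only if $\mathfrak{S}(\theta)=\{0\}$; and $\mathfrak{S}(\theta)$ is always a closed ideal of ${\L[S]}_*$. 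Third, for each character $\rho_s$ of ${\L[S]}_*$, the composition $\rho_s\circ\theta$ is either zero or a character of ${\L[S]}_*$, hence in either case a bounded linear functional; a standard lemma (see Sinclair, \emph{Automatic continuity of linear operators}, or Rudin) then shows $\mathfrak{S}(\theta)\subseteq\ker(\rho_s\circ\theta)$ whenever $\rho_s\circ\theta$ is continuous --- in fact one shows more directly that $\rho_s(\mathfrak{S}(\theta))$ is bounded, forcing $\rho_s(\psi)$-values to be controlled; combining over all $s$ and using semisimplicity gives $\mathfrak{S}(\theta)\subseteq\bigcap_{s\in S_0}\ker\rho_s=\{0\}$. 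Hence $\theta$ is bounded.

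\textbf{Alternative, more elementary route.} Because the spectrum is the discrete set $S_0$ and the characters are the evaluations $\rho_s$, one can argue concretely: for an endomorphism $\theta$, each $\rho_s\circ\theta$ is a character or zero, so there is a map $\sigma\colon S_0\to S_0\cup\{0\}$ with $\theta(\varphi)(L_s)=\varphi(L_{\sigma(s)})$ (interpreting $L_0=0$) for all $\varphi$ and all $s\in S_0$. This already pins down $\theta$ completely on the weak-$*$ dense span of the $\varphi_w$, and shows $\|\theta(\varphi)\|$ is controlled coefficient-wise by $\|\varphi\|$ via the bounded functionals $\rho_s$; one then upgrades this to a genuine norm bound using the closed graph theorem applied to $\theta$ (its graph is closed because convergence in ${\L[S]}_*$ forces convergence of every coefficient $\rho_s(\cdot)$, and $\theta$ respects coefficients). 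Either way the conclusion is immediate.

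\textbf{Main obstacle.} The only delicate point is justifying that the separating space is contained in every $\ker(\rho_s\circ\theta)$ --- i.e., handling the case where $\rho_s\circ\theta=0$ versus a nonzero character --- and verifying that ${\L[S]}_*$ is genuinely semisimple (equivalently, that the Fourier-coefficient functionals separate points, which follows from the Ces\`aro convergence noted in the Remark after Theorem \ref{T:ccap} and Theorem \ref{T:uniequ}). Once semisimplicity is in hand, the automatic continuity is the classical theorem that any homomorphism from a Banach algebra into a commutative semisimple Banach algebra is continuous, so I would simply cite that and fill in the semisimplicity verification.
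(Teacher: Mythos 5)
Your proposal is correct and follows essentially the same route as the paper: Theorem \ref{T:char} identifies the characters as the evaluations $\rho_s$, $s\in S_0$, which separate points of ${\L[S]}_*$ (via weak*-density of the span of the $L_s$ and weak*-continuity of the functionals), so ${\L[S]}_*$ is a semisimple commutative Banach algebra and the classical automatic continuity theorem for homomorphisms into such algebras applies. The separating-space machinery you sketch is exactly the standard proof of that cited theorem, so nothing further is needed.
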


\begin{proof}
Theorem \ref{T:char} clearly implies that the spectrum of $\L[S]_*$ separates the elements of $\L[S]_*$. 
Hence ${\L[S]}_*$ is a semisimple commutative Banach algebra. 
Therefore, every endomorphism on ${\L[S]}_*$ is automatically continuous (and so bounded) (see for example \cite[Lemma 11.13]{DavRamSha1}). 
\end{proof}

\begin{thm}
\label{T:aut}
A linear mapping $\theta:{\L[S]}_* \to {\L[S]}_*$ is an algebra automorphism if and only if  its adjoint $\theta^*:{\L[S]} \to {\L[S]}$ is a weak*-weak* continuous linear mapping, which  
is determined by a zero-preserving permutation $\sigma$ of $S$ in the sense that
\[
\theta^*(L_s)=L_{\sigma(s)} \quad \text{for all}\quad s\in S,
\]
where $\sigma(\dot 0)=\dot 0$. 

\end{thm}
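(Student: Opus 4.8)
The plan is to prove the two directions of the equivalence, leaning heavily on the spectrum calculation in Theorem~\ref{T:char} and the duality between corepresentations of $\L[S]$ and completely bounded representations of $\L[S]_*$ from Theorem~\ref{T:hda}.

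For the forward direction, suppose $\theta:\L[S]_*\to\L[S]_*$ is an algebra automorphism. By Lemma~\ref{L:bdd}, $\theta$ is automatically bounded, so its adjoint $\theta^*:\L[S]\to\L[S]$ is a bounded weak*-weak* continuous linear map. The key observation is that an algebra automorphism of a commutative semisimple Banach algebra induces a bijection of its spectrum: concretely, if $\rho\in\Sigma_{\L[S]_*}$ then $\rho\circ\theta^{-1}$ is again a character, and $\theta\mapsto(\rho\mapsto\rho\circ\theta^{-1})$ is a bijection of $\Sigma_{\L[S]_*}$ onto itself. By Theorem~\ref{T:char}, $\Sigma_{\L[S]_*}=S_0$, with the element $s\in S_0$ corresponding to the evaluation character $\rho_s(\varphi)=\varphi(L_s)$ from Example~\ref{Eg:Mk}. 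Hence there is a permutation $\sigma$ of $S_0$ with $\rho_s\circ\theta=\rho_{\sigma(s)}$ for all $s\in S_0$, i.e. $\varphi(\theta^*(L_s))=\theta(\varphi)(L_s)=\rho_s(\theta(\varphi))=\rho_{\sigma^{-1}(s)}(\varphi)=\varphi(L_{\sigma^{-1}(s)})$ for every $\varphi$ — so after relabelling $\sigma$ we get $\theta^*(L_s)=L_{\sigma(s)}$ for all $s\in S_0$, and we extend by $\sigma(\dot0)=\dot0$ (consistent with $L_{\dot0}=0$). It remains to check $\sigma$ is multiplicative, i.e. is a semigroup automorphism rather than a bare permutation — wait, the statement only asks for a ``zero-preserving permutation,'' so strictly speaking we need only that $\theta^*$ has the displayed form; the multiplicativity of $\sigma$ is presumably taken up in a later result. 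I would note here, though, that multiplicativity follows automatically: since $\theta$ is a homomorphism, $\theta^*$ is a comultiplication-intertwiner in the appropriate sense, and applying $\Delta_S$ to $\theta^*(L_s)$ via $\Delta_S(L_s)=L_s\otimes L_s$ forces $\sigma(st)=\sigma(s)\sigma(t)$ — but I would keep the statement as given.

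For the converse, suppose $\sigma$ is a zero-preserving permutation of $S$ and define $\theta^*$ on the generators by $\theta^*(L_s)=L_{\sigma(s)}$, assuming this extends to a weak*-weak* continuous linear map on $\L[S]$. One first checks $\theta^*$ is a well-defined weak*-continuous linear bijection: on a generic element $F\sim\sum_s a_sL_s$, set $\theta^*(F)\sim\sum_s a_sL_{\sigma(s)}$; since $\{\rho_s\}_{s\in S_0}$ separates points of $\L[S]_*$ and the Ces\`aro sums of the Fourier series converge weak* to $F$ (from the Remark following Theorem~\ref{T:ccap} together with Theorem~\ref{T:uniequ}), this is consistent. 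Its pre-adjoint $\theta:=(\theta^*)_*$ acts on $\L[S]_*$; because $\{\rho_s:s\in S_0\}$ spans a weak*-dense subspace of $\L[S]_*$ and $\theta(\rho_s)=\rho_{\sigma^{-1}(s)}$ (indeed $\rho_s(\theta^*(F))=a_{\sigma(s)}=\rho_{\sigma^{-1}(s)}(F)$), $\theta$ permutes the characters and is invertible with inverse built from $\sigma^{-1}$. Finally one verifies $\theta$ is multiplicative: it suffices to check $\theta(\rho_s\star\rho_t)=\theta(\rho_s)\star\theta(\rho_t)$ on the dense set of characters, and both sides are computed using the comultiplication $\Delta_S(L_s)=L_s\otimes L_s$ — the left side is $\rho_{?}$-type evaluation against $F\otimes F$ and the right side matches precisely because $\sigma$ is a permutation; here one only needs that $\sigma$ sends $S_0$ bijectively to $S_0$, not that it respects multiplication, since characters of a commutative algebra multiply via $\rho\star\rho'=\rho\cdot\rho'$ pointwise. (If the paper in fact intends $\sigma$ to be a semigroup automorphism — which the surrounding discussion about ``semigroup automorphisms of $S$'' suggests — the same computation works and is cleaner.)

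The main obstacle I anticipate is the well-definedness and weak*-continuity of $\theta^*$ on all of $\L[S]$ in the converse direction: a permutation of the index set $S_0$ need not give a bounded operator on a general weighted Fock-type space, because the weights $1/|[s]|$ vary. The permutation $\sigma$ must be compatible with the weight function $s\mapsto|[s]|$ for $\theta^*$ to be even bounded, let alone completely contractive; a genuine semigroup automorphism of $S$ automatically preserves the cardinalities of equivalence classes (since $[w]$ is determined by the congruence, which is intrinsic to $S$), which is exactly why the ``real'' theorem is about semigroup automorphisms. I would handle this by observing that any permutation $\sigma$ for which $\theta^*$ is a \emph{bounded} weak*-continuous map must preserve the $\ell^2$-structure on $\N_\J\cong\H[S]$, hence satisfy $|[\sigma(s)]|=|[s]|$, reducing matters to the semigroup-automorphism case where the dilation-theoretic functional calculus (as in the Remark after Theorem~\ref{T:LJHopf}, or directly via Theorem~\ref{T:uniequ}) guarantees $\theta^*$ is completely contractive on $\L[S]$; but I would flag that the precise hypothesis in the statement (``weak*-weak* continuous linear mapping'') is doing real work and the honest content is that such $\theta^*$ are in bijection with weight-preserving permutations, equivalently with semigroup automorphisms of $S$.
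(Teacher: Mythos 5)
Your forward direction is correct and is essentially the paper's own argument in different packaging: where the paper derives $\Delta_S(\theta^*(L_s))=\theta^*(L_s)\otimes\theta^*(L_s)$ from the intertwining identity and invokes the classification of semigroup-like elements (Remark \ref{R:groupele}), you compose characters with $\theta$ and invoke Theorem \ref{T:char}; by the proof of Theorem \ref{T:char} these are the same fact. Your reading of the statement is also right on both counts: multiplicativity of $\sigma$ is deferred to Theorem \ref{T:Hdaut}, and in the converse the weak*-continuity of $\theta^*$ is part of the hypothesis, so your concerns about boundedness of a map induced by an arbitrary permutation do not affect the logic of the equivalence.

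The converse direction, however, has a genuine defect as written. The characters $\rho_s$ are elements of $(\L[S]_*)^*=\L[S]$ (under the canonical identification, $\rho_s=L_s$), not elements of $\L[S]_*$; consequently the expressions $\theta(\rho_s)$, $\rho_s\star\rho_t$, and the assertion that $\{\rho_s\}$ spans a weak*-dense subspace of $\L[S]_*$ do not parse, and the concluding ``both sides match because characters multiply pointwise'' computation concerns the wrong objects entirely. The elements of the predual you should be using are the coefficient functionals $\varphi_s=[x_{\dot{\varnothing}}x_s^*]$, which satisfy $\varphi_s\star\varphi_t=\delta_{s,t}\,|[s]|^{-1}\varphi_s$ --- nothing like pointwise multiplication --- and one must then track the normalizations through $\theta(\varphi_s)=\big(|[\sigma^{-1}(s)]|/|[s]|\big)\varphi_{\sigma^{-1}(s)}$ to see that multiplicativity survives. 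That repair can be made to work (the span of the $\varphi_s$ is weakly, hence by Mazur norm, dense, and $\star$ and $\theta$ are norm continuous), but the cleaner route, and the one the paper takes, is to first record the purely formal equivalence
\[
\theta(\varphi\star\psi)=\theta(\varphi)\star\theta(\psi)\ \ \text{for all}\ \varphi,\psi\in\L[S]_*
\iff
\Delta_S\circ\theta^*=(\theta^*\otimes\theta^*)\circ\Delta_S ,
\]
obtained by unwinding the definition of $\star$, and then to verify the right-hand identity on the generators $L_s$, where both sides equal $L_{\sigma(s)}\otimes L_{\sigma(s)}$, extending to all of $\L[S]$ by the weak*-continuity of $\theta^*$ and $\Delta_S$ together with the weak* convergence of the Ces\`aro sums. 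I recommend restructuring your converse (and, for uniformity, your forward direction) around this dual identity.
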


\begin{proof}
We first claim that  
$\theta$ is an algebra automorphism of ${\L[S]}_*$ if and only if its adjoint $\theta^*$ is bijective and satisfies the following identity:
\begin{align}
\label{E:intwin}
\Delta_S\circ \theta^*=(\theta^*\otimes \theta^*)\circ \Delta_S.
\end{align}
To that end, observe that for $\varphi, \psi\in{\L[S]}_*$, we have the following implications:
\begin{align*}
&\,\theta(\varphi\star\psi)=\theta(\varphi)\star\theta(\psi),\\
\Longleftrightarrow&\, \theta(\varphi\star\psi)(A)=(\theta(\varphi)\star\theta(\psi))(A), \quad  A\in\L[S],\\
\Longleftrightarrow&\, (\varphi\star\psi)(\theta^*(A))=(\theta(\varphi)\otimes \theta(\psi))(\Delta_S(A)), \quad A\in\L[S],\\
\Longleftrightarrow&\, (\varphi\otimes\psi)\big(\Delta_S(\theta^*(A))\big)=\big((\varphi\circ\theta^*)\otimes (\psi\circ\theta^*)\big)(\Delta_S(A)), \quad A\in\L[S],\\
\Longleftrightarrow&\, (\varphi\otimes\psi)\big(\Delta_S(\theta^*(A))\big)=(\varphi\otimes \psi)\big((\theta^*\otimes\theta^*)(\Delta_S(A))\big), \quad A\in\L[S].
\end{align*}
Therefore, $\theta$ is an algebra homomorphism of ${\L[S]}_*$ if and only if $\theta^*$ satisfies \eqref{E:intwin}, which proves the claim.

Now let $\theta$ be an algebra automorphism of ${\L[S]}_*$.
By Lemma \ref{L:bdd}, $\theta$ is bounded. Using the fact that the weak* and weak topologies 
on $\L[S]$ coincide, as they do on $\L_\J$ (see Theorem \ref{T:uniequ}), it follows that $\theta^*$ is weak*-weak* continuous.
Also, it follows from \eqref{E:intwin} that 
\[
\Delta_S(\theta^*(L_s))=\theta^*(L_s)\otimes \theta^*(L_s), \quad s\in S,
\]
since $\Delta(L_s)=L_s\otimes L_s$ for all $s\in S$.
This implies that $\theta^*(L_s)$ satisfies \eqref{E:AW}, and hence that $\theta^*(L_s)$ is a semigroup-like element of $\L[S]$. Therefore,
by Remark \ref{R:groupele}, there is an element $\sigma(s) \in S$ such that
\[
\theta^*(L_s)=L_{\sigma(s)}, \quad s \in S.
\]
Clearly $\sigma(\dot 0)=\dot 0$. 

Now fix $t\in S$ and suppose  there is $A\in \L[S]$ such that $\theta^*(A)=L_t$. Then necessarily,  $A=L_s$ for some 
$s\in S$. Indeed,  without loss of generality we can assume that $t\ne 0$ and hence that $A\ne 0$. 
Write the Fourier series for $A$ as $A \sim \sum_{s \in S} a_sL_s$. 
Then 
\[
\theta^*\left(\sum_s a_sL_s\right)\sim \sum_s a_s\theta^*(L_s)\sim  \sum_sa_sL_{\sigma(s)} = L_t.
\]
From the uniqueness of the Fourier series of elements of $\L[S]$, it follows that $A=L_s$ for some $s\in S$. 
Thus we have proved that the restriction of $\theta^*$ to $\{L_s: s\in S\}$ is a bijection. 
In particular, $\theta^*$ is  determined by a permutation (i.e., bijection) of $S$, which is the map $\sigma$ defined as above.

Conversely, let $\theta^*$ be a  linear mapping on ${\L[S]}_*$ such that $\theta^*$ is weak*-weak* continuous
determined by a permutation $\sigma$ of $S$ in the above sense. By continuity, it is easy to check that $\theta^*$ satisfies \eqref{E:intwin}. 
By the claim from the beginning of the proof, we obtains that $\theta$ is an algebra homomorphism. Since $\theta^*$ is bijective, so is $\theta$. 
Therefore, $\theta$ is an algebra automorphism of ${\L[S]}_*$. 
\end{proof}

We are now able to characterize the Hopf dual algebra automorphisms of $\L_\J$ and ${(\L_\J)}_*$. 

\begin{thm}
\label{T:Hdaut}
Let $\theta:{(\L_\J)}_* \to {(\L_\J)}_*$ be a map with adjoint $\theta^*:\L_\J \to \L_\J$. Then the following statements are equivalent:
\begin{itemize}
\item[(i)] The map $\theta$ is a Hopf convolution algebra automorphism.
\item[(ii)] The adjoint map $\theta^*$ is a Hopf dual algebra automorphism.
\item[(iii)] There is a unital semigroup zero-preserving isomorphism $\sigma$ of $S$ such that 
\[
\theta^*(L_s)=L_{\sigma(s)}, \quad \forall s\in S.
\]
\end{itemize}
\end{thm}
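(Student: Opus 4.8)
The plan is to prove the chain of equivalences $(i)\Leftrightarrow(ii)\Leftrightarrow(iii)$, using the previously established machinery --- in particular Theorem~\ref{T:aut}, which already characterizes the \emph{algebra} automorphisms of $\L[S]_*$, together with the identification $\L_\J\cong\L[S]$ from Theorem~\ref{T:uniequ} and the Hopf structure from Theorem~\ref{T:conv}. The crucial observation is that being a \emph{Hopf} automorphism adds exactly one requirement beyond being an algebra automorphism: compatibility with the comultiplication. So the real content is showing that, for the maps arising here, this extra requirement is automatic once the algebra-automorphism condition holds.

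\textbf{Key steps.} First I would reduce to the semigroup algebra picture: via the unitary equivalence of Theorem~\ref{T:uniequ}, identify $\L_\J$ with $\L[S]$ and $(\L_\J)_*$ with $\L[S]_*$, so that the canonical comultiplication is the one satisfying $\Delta_S(L_s)=L_s\otimes L_s$. Next, for $(i)\Leftrightarrow(iii)$: if $\theta$ is a Hopf convolution algebra automorphism, then in particular $\theta$ is an algebra automorphism, so Theorem~\ref{T:aut} gives a zero-preserving permutation $\sigma$ of $S$ with $\theta^*(L_s)=L_{\sigma(s)}$; the Hopf (coalgebra) compatibility of $\theta$ forces $\theta^*$ to intertwine the comultiplications --- i.e. $\Delta_S\circ\theta^*=(\theta^*\otimes\theta^*)\circ\Delta_S$ --- which is already built into the claim proved at the start of the proof of Theorem~\ref{T:aut}. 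Evaluating this identity on $L_s$ and using $\Delta_S(L_s)=L_s\otimes L_s$ shows nothing new about $\sigma$ as a set map; the extra structure must instead come from the requirement that $\theta$ respect the multiplication on $(\L_\J)_*$ at the level of the \emph{coalgebra} structure of $\L_\J$, forcing $\sigma$ to be a semigroup homomorphism. Concretely, for $s,t\in S$, comparing $\theta^*(L_sL_t)=\theta^*(L_{st})=L_{\sigma(st)}$ with $\theta^*(L_s)\theta^*(L_t)=L_{\sigma(s)}L_{\sigma(t)}=L_{\sigma(s)\sigma(t)}$ --- using that $\theta^*$ is an \emph{algebra} homomorphism on $\L[S]$, which is part of being a Hopf dual algebra automorphism --- yields $\sigma(st)=\sigma(t)\sigma(s)$ (or $\sigma(st)=\sigma(s)\sigma(t)$ after checking orientation via $L_sL_t=L_{st}$); combined with $\sigma(\dot 0)=\dot 0$ and $\sigma$ being a bijection fixing the identity $e$ (since $\theta^*$ is unital), this makes $\sigma$ a unital zero-preserving semigroup isomorphism. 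Conversely, given such a $\sigma$, define $\theta^*$ on the generators by $\theta^*(L_s)=L_{\sigma(s)}$, extend weak*-continuously using the Ces\`aro-sum approximation (Remark after Theorem~\ref{T:ccap}), check it is a weak*-continuous algebra automorphism intertwining $\Delta_S$ (the latter because $\sigma$ is a semigroup map and $\Delta_S(L_s)=L_s\otimes L_s$), and take $\theta=(\theta^*)_*$.

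\textbf{Handling $(ii)$.} For $(ii)\Leftrightarrow(iii)$ I would argue symmetrically: a Hopf dual algebra automorphism $\theta^*$ of $\L_\J$ is by definition a weak*-continuous completely isometric (or at least weak*-bicontinuous) algebra automorphism commuting with $\Delta_S$; its predual $\theta$ is then an algebra automorphism of $(\L_\J)_*$ by duality, and the comultiplication-intertwining of $\theta^*$ dualizes to multiplicativity of $\theta$ for $\star$, while multiplicativity of $\theta^*$ dualizes to $\theta$ respecting the comultiplication on the predual (i.e. $\theta$ is a coalgebra map), so $\theta$ is a Hopf convolution algebra automorphism. The structure of $\theta^*$ is then pinned down exactly as in the proof of Theorem~\ref{T:aut}: $\Delta_S(\theta^*(L_s))=\theta^*(L_s)\otimes\theta^*(L_s)$ forces $\theta^*(L_s)$ to be a semigroup-like element, hence $\theta^*(L_s)=L_{\sigma(s)}$ by Remark~\ref{R:groupele}, and bijectivity plus multiplicativity give that $\sigma$ is a unital zero-preserving semigroup isomorphism.

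\textbf{Main obstacle.} The routine parts --- dualizing conditions and invoking Theorem~\ref{T:aut} --- are mechanical. The step requiring genuine care is verifying that a candidate $\theta^*$ built from a semigroup isomorphism $\sigma$ actually extends from the generators $\{L_s\}$ to a \emph{bounded} weak*-weak* continuous map on all of $\L[S]$; one wants to say $\theta^*(\sum a_sL_s)\sim\sum a_sL_{\sigma(s)}$, but this needs the uniqueness of Fourier series together with the Ces\`aro approximation, and one must confirm the Ces\`aro sums are transported correctly under $\sigma$ (which is where homogeneity of $\J$ --- hence length-preservation of $\sigma$ on $S_0$ --- and property $S_\sigma$ of $\L_\J$ are used, exactly as in Theorem~\ref{T:aut}). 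Once that extension is in hand, completely isometric behaviour follows because $\sigma$ permutes an orthonormal-type basis in the $\L[S]$ picture, so the map is unitarily implemented.
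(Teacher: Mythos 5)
Your proposal is correct and follows essentially the same route as the paper: reduce to $\L[S]$ via Theorem \ref{T:uniequ}, get (i)$\Leftrightarrow$(ii) by duality (using that $\theta$ intertwining the predual comultiplication is dual to $\theta^*$ being multiplicative, the mirror of the claim at the start of the proof of Theorem \ref{T:aut}), extract the zero-preserving permutation $\sigma$ from Theorem \ref{T:aut}, and upgrade it to a unital semigroup isomorphism by comparing $\theta^*(L_{st})$ with $\theta^*(L_s)\theta^*(L_t)$. The extra attention you give to extending $\theta^*$ from the generators in the converse direction is sound but already absorbed into the converse half of Theorem \ref{T:aut}, which the paper invokes without further comment.
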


\begin{proof}
Let $(\Delta_{S})_*$ denote the comultiplication of ${(\L_\J)}_*$. 
Then, as in the proof of Theorem \ref{T:aut},  one can show that $\theta$ intertwines $(\Delta_S)_*$
if and only if $\theta^*$ is an algebra homomorphism of $\L[S]$. 

 (i) $\Leftrightarrow$ (ii) This follows from the duality between $\L_\J$ and ${(\L_\J)}_*$ and the definitions of the comultiplications on $\L_\J$ and ${(\L_\J)}_*$.
 
(ii)  $\Leftrightarrow$ (iii) If $\theta$ is an algebra automorphism of ${\L[S]}_*$, then by Theorem \ref{T:aut}, there is a zero-preserving permutation $\sigma$ of $S$ such that 
$
\theta(L_s)=L_{\sigma(s)}
$
for all $s\in S$.  But $\theta^*$ is now also an algebra homomorphism, and one can easily check that $\sigma$ is, in addition, a homomorphism of $S$, and hence a unital semigroup isomorphism. 
\end{proof}

An immediate consequence of Theorem  \ref{T:Hdaut} is that
every Hopf dual algebra automorphism of $\L_d$ is a gauge automorphism, and hence is unitarily implemented (cf. \cite{Voi}, also \cite{DavPit}).
Using this fact, we can unify the description of the Hopf automorphisms of $\L$ and $\L_*$. Indeed, they are all unitarily implemented
by second quantizations of unitaries permuting the standard basis of $\bC^d$.

To see this, first suppose $\theta$ is a Hopf dual algebra automorphism of $\L$. Then by Theorem \ref{T:Hdaut}, $\theta$
corresponds to a permutation $\sigma$ of $\{1,...,d\}$.
Consider the Fock space $\ell^2(\Fd)$, and a fixed orthonormal basis $\{\xi_1,...,\xi_d\}$ of $\bC^d$ corresponding to $L_1,...,L_d$. In other words, 
for an element $f$ in $\bC\Fd$, $L_i f = \xi_i \otimes f$. Let $M$ denote the unitary on $\bC^d$ corresponding to $\sigma$, i.e.
$M\xi_i = \xi_{\sigma(i)}$.

The unitary $M$ induces a unitary $U$ on $\ell^2(\Fd)$ which is called the second quantization of $M$. Specifically, 
$U$ acts on elementary  tensors by
\[
U( \xi_{i_1}\otimes \cdots \otimes \xi_{i_k}) = \xi_{\sigma(i_1)} \otimes \cdots \otimes \xi_{\sigma(i_k)}, \quad k \geq 1.
\]
The map $\theta$ is implemented by the unitary $U$ in the sense that
\[
\theta(A) = UAU^*, \quad \forall A \in \L.
\]
If $\theta$ is induced by a Hopf convolution algebra automorphism of ${\L}_*$, say
$\theta_*$, then the action of $\theta_*$ is also determined by $U$ in the
following way. For an arbitrary functional in ${\L}_*$, let $F$ be a representative trace class operator. Then for all $A\in L$ we have 
\[
\langle F,\theta(A)\rangle  = \Tr(\theta(A)F) = \Tr(UAU^*F) = \Tr(AU^*FU)=\langle U^*FU, A\rangle,
\]
which implies $\theta_*([F])=[U^*FU]$.

\section{Schur Multipliers}
\label{S:mul}	

Let $\J$ be a homogeneous Hopf ideal of $\L$, and let $S$ denote the semigroup associated to it. 
In this section, we characterize the Schur multipliers of the convolution algebra $\L[S]_*$. 
We will use the notation from Section \ref{S:hhideal}. 

By \cite[Proposition 1.2]{AP1} and Theorem \ref{T:uniequ}, $\L[S]$ has property $\bA_1(1)$,   
so, for any $\varphi\in\L[S]_*$, there are $\xi, \eta\in \H[S]$ such that $\varphi=[\xi\eta^*]$. As mentioned in Section \ref{S:hhideal}, 
$\L_*$ is weakly spanned by the coefficient functionals $\varphi_w=[\xi_\mt\xi_w^*]$ ($w\in\Fd$). By the unitary equivalence between
$\L[S]$ and $\L_\J$ specified in Subsection \ref{subS:L[S]}, one immediately obtains that $\L[S]_*$ is
weakly spanned by the following functionals 
\[
\varphi_s=[x_{\dot{\mt}}x_s^*], \quad s\in S_0.
\]   

\begin{defn}
\label{D:Schur}
A function $\phi:S_0\to \bC$
is called a \textit{(completely bounded) Schur multiplier of $\L[S]_*$} if the map $m_\phi:\L[S]_*\to\L[S]_*$ defined by 
\[
m_\phi(\varphi_s)=\phi(s)\varphi_s, \quad s\in S_0,
\]
is (completely) bounded.
\end{defn}

Using an argument similar to that of \cite[Proposition 1.2]{CH}, one can easily verify that  $\phi$ is a completely bounded Schur multiplier 
of $\L[S]_*$ if and only if the adjoint  map $m_\phi^*: \L[S]\to \L[S]$ defined by
\[
 m_\phi^*(L_s) = \phi(s) L_s, \quad s \in S_0,
\]
 is a completely bounded Schur multiplier of $\L[S]$. 
 
Let $\Gamma$ be a set. Recall that a function $a:\Gamma\times \Gamma\to \bC$ is called a \textit{Schur multiplier of $B(\ell^2(\Gamma))$} if the map
$S_a: B(\ell^2(\Gamma))\to  B(\ell^2(\Gamma))$ defined by 
\[
(S_a(T)e_s, e_t)=a(s,t)(Te_s, e_t), \quad s,t \in \Gamma,
\]
is bounded, where $\{e_s:s\in \Gamma\}$ is the standard orthonormal basis of $\ell^2(\Gamma)$. 

Notice that if $A$ is a matrix of the form $A=(a(s,t))_{s,t\in\Gamma}$, then $S_a(A) = S_A(T)$, where we write $S_A(T)$ to denote the Schur (i.e. entrywise) product of $A$ and $T$. A well-known result credited to Grothendieck 
states that $a$ is a (bounded) Schur multiplier of $B(\ell^2(\Gamma))$ with $\|S_A\|\le C$
if and only if it is  a completely bounded Schur multiplier of $B(\ell^2(\Gamma))$ with $\|S_A\|_{cb}\le C$. Moreover, this occurs
if and only if there is a Hilbert space $\H$ and two functions $x,y:\Gamma\to \H$ such that $a(s,t)=(x(s),y(t))$,
with $\sup_s\|x(s)\|\sup_t\|y(t)\|\le C$. 
We refer the reader to \cite{Ari, Pisier1} for more information about Schur multipliers.

For a Hilbert space $\H$, let $S_1(\H)$ denote the algebra of trace class operators on $\H$. It is well-known that $K(\H)^*=S_1(\H)$ and $S_1(\H)^*=B(\H)$, where
$K(\H)$ and $S_1(\H)$ denote the algebras of compact operators and trace class operators on $\H$ respectively.

The following lemma seems to be folklore. We include a proof for completeness.

\begin{lem}
\label{L:mul}
The algebras $B(\H)$, $S_1(\H)$ and $K(\H)$ have the same (completely bounded) Schur multipliers. Moreover, 
every (completely bounded) Schur multiplier of $B(\H)$ and/or $S_1(\H)$ is automatically weak*-continuous. 
\end{lem}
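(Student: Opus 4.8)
The claim has two parts: first, that $B(\H)$, $S_1(\H)$ and $K(\H)$ share the same (completely bounded) Schur multipliers; second, that any such multiplier is automatically weak*-continuous on $B(\H)$ (and on $S_1(\H)$). The plan is to exploit the duality chain $K(\H)^* = S_1(\H)$, $S_1(\H)^* = B(\H)$, together with the fact that the Schur multiplication map is defined by the same formula $(S_a(T)e_s, e_t) = a(s,t)(Te_s,e_t)$ on all three algebras. First I would observe that if $S_a$ is bounded on $K(\H)$, then since $K(\H)^* = S_1(\H)$, the adjoint $S_a^*$ is a bounded map on $S_1(\H)$; a quick check on rank-one operators shows $S_a^*$ is itself the Schur multiplication associated to the transposed symbol $a^t(s,t) = a(t,s)$, and by Grothendieck's characterization recalled just above (the symmetric factorization $a(s,t) = (x(s),y(t))$), $a$ is a Schur multiplier if and only if $a^t$ is, with the same (cb) norm. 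Iterating once more, $S_a^{**}$ is bounded on $K(\H)^{**} = B(\H)$ and restricts to $S_a$ on $K(\H)$; since finite-rank operators are weak*-dense in $B(\H)$ and $S_a$ is already determined on them, $S_a^{**}$ coincides with the Schur multiplier on $B(\H)$. Conversely, a bounded Schur multiplier of $B(\H)$ restricts to one of $K(\H)$ because $S_a$ maps finite-rank operators to finite-rank operators and is continuous; and it restricts to $S_1(\H)$ by the predual relation $S_1(\H) = K(\H)^*$ once we know it acts on $K(\H)$. The equivalence of cb-boundedness across the three is inherited from Grothendieck's theorem, which makes boundedness and complete boundedness of a Schur multiplier coincide with the same constant.

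For the weak*-continuity statement, the key point is that a bounded Schur multiplier of $B(\H)$, having been realized as $S_a^{**}$ for the bounded map $S_a$ on $K(\H)$, is by construction the second adjoint of a bounded operator, hence automatically weak*-weak* continuous. Equivalently, one argues directly: $S_a$ preserves the preannihilator structure, i.e. $S_a^*$ maps $S_1(\H)$ into $S_1(\H)$ (this is the transpose-symbol computation above), so $S_a$ is the adjoint of a bounded map on the predual and therefore weak*-continuous. The same reasoning applied to $S_1(\H)$ — whose dual is $B(\H)$ and whose predual is $K(\H)$ — gives weak*-continuity there.

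The main obstacle, such as it is, is purely bookkeeping: one must verify carefully that conjugating the Schur multiplication map through the duality pairings $\langle T, R\rangle = \Tr(TR)$ really does produce a Schur multiplication map again, and identify its symbol (it is the transpose $a^t$, because $\Tr(S_a(T)R) = \sum_{s,t} a(s,t)(Te_s,e_t)(Re_t,e_s) = \Tr(T\, S_{a^t}(R))$). Once this transpose identity is pinned down, everything else follows formally from the symmetry of Grothendieck's factorization $a(s,t) = (x(s),y(t))$ under swapping $x \leftrightarrow y$, and from the elementary fact that a map which is the adjoint (or double adjoint) of a bounded operator between the relevant (pre)duals is automatically weak*-continuous. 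I would present the transpose computation on rank-one operators $e_s \otimes e_t$ as a short displayed calculation, then state the rest as immediate consequences.
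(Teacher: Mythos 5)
Your proof is correct and follows essentially the same route as the paper's: the trace-pairing computation identifying the (pre)adjoint of $S_a$ as the Schur multiplication with transposed (or conjugate-transposed) symbol, the invariance of Grothendieck's factorization $a(s,t)=(x(s),y(t))$ under transposition, and the observation that a map arising as the adjoint of a bounded map on the predual is automatically weak*-continuous. One small slip worth fixing: $S_a$ need not map finite-rank operators to finite-rank operators (a rank-one $\xi\eta^*$ with $\xi,\eta$ not basis vectors has infinitely many nonzero matrix entries); the restriction to $K(\H)$ instead follows because $S_a$ multiplies each matrix unit $e_se_t^*$ by a scalar and the linear span of these is norm-dense in $K(\H)$.
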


\begin{proof}
Let $A$ be a Schur multiplier of $B(\H)$, and let $B$ and $T$ be arbitrary elements of $B(\H)$ and $S_1(\H)$, respectively. Then 
\[
\Tr(S_A(B) T)=\sum_i \sum_j (a_{ij} b_{ij}) t_{ji} = \sum_i \sum_j b_{ij} (a_{ij} t_{ji}) = \Tr(B S_{A^t}(T)),
\]
where $A^t$ is the transpose of $A$.
By the duality between $B(\H)$ and $S_1(\H)$, the above identity shows that $A$ is a Schur multiplier of $B(\H)$ iff $A^t$ is a Schur multiplier of 
$S_1(\H)$. Hence $B(\H)$ and $S_1(\H)$ have the same Schur multipliers. 

Now let $A$ be a Schur multiplier of $S_1(\H)$. Then from the above we have 
\[
\Tr(S_A(B) T)= \Tr(B S_{A^t}(T))
\]
for all $B\in S_1(\H)$ and $T\in K(\H)$.
So $A$ is a Schur multiplier of $S_1(\H)$ if and only if $A^t$ is a Schur multiplier of $K(\H)$. Hence $S_1(\H)$ and $K(\H)$ have the same Schur multipliers. 

Now suppose $S_A$ is a weak*-continuous Schur multiplier on $B(\H)$, and let $(S_A)_*$ denote its preadjoint. Then for $T\in S_1(\H)$ and $B\in B(\H)$,
\[
\langle S_A(B),T\rangle = \langle B, (S_A)_*T\rangle,
\]
so taking $T$ to be the rank one operator $T = e_ie_j^*$ gives 
\[
a_{ji} b_{ji} = \langle S_A(B), e_ie_j^*\rangle  = \langle B, (S_A)_*(e_ie_j^*)\rangle.
\]
Hence 
$(S_A)_*(e_ie_j^*)= \overline{a_{ji}} e_ie_j^*$.
In other words, the preadjoint $(S_A)_*$ is the Schur multiplier on $S_1(\H)$ corresponding to the matrix $A^*$, that is $(S_A)_*=S_{A^*}$.
Thus $S_A$ is weak*-continuous if and only if $S_{A^*}$ is a Schur multiplier of $S_1(\H)$. From above, this is the case if and only if $(A^*)^t$ is a Schur multiplier of $B(\H)$. But this is automatically true for every Schur multiplier of $B(\H)$ by Grothendieck's result.

The weak*-continuity of Schur multipliers of $S_1(\H)$ can be proved similarly. 
\end{proof}

We finish this paper by proving the main result of this section. 

\begin{prop}
Let $\phi: S_0\to \bC$ be a given function. Then the following are equivalent. 
\begin{itemize}
\item[(i)]
 $\phi$ is a completely bounded Schur multiplier of  $\L[S]_*$.
 
\item[(ii)] The adjoint $m_\phi^*$ is the restriction of a weak*-continuous Schur multiplier of $B(\H[S])$. 

 \item [(iii)] There is a Hilbert space $\K$  and two families of vectors $\{f(s) \mid s\in S_0\}$ and $\{g(s) \mid s\in S_0\}$ in $\K$ such that 
\[
\phi(s) = (f(t), g(st)), \quad s, t\in S_0 \text { with } st\ne \dot{0}.       
\]
\end{itemize}
\end{prop}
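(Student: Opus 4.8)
The plan is to prove the cycle of implications (i) $\Rightarrow$ (ii) $\Rightarrow$ (iii) $\Rightarrow$ (i), leveraging the unitary equivalence between $\L[S]$ and $\L_\J$ and the earlier discussion identifying the functionals $\varphi_s = [x_{\dot{\mt}} x_s^*]$ that weakly span $\L[S]_*$. Throughout, recall that $\{y_s = \sqrt{|[s]|}\, x_s\}$ is an orthonormal basis of $\H[S]$, so $B(\H[S])$ may be viewed as a space of matrices indexed by $S_0$.

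For (i) $\Leftrightarrow$ (ii): by the remark following Definition \ref{D:Schur}, $\phi$ is a completely bounded Schur multiplier of $\L[S]_*$ if and only if $m_\phi^* \colon L_s \mapsto \phi(s) L_s$ is a completely bounded Schur multiplier of $\L[S]$. Writing $L_s$ in the orthonormal basis $\{y_t\}$, one has from the formula $L_s y_t = \sqrt{|[t]|/|[st]|}\, y_{st}$ that the $(y_t, y_u)$-matrix entry of $L_s$ is supported on pairs with $u = st$; hence $m_\phi^*$ acts on $\L[S] \subseteq B(\H[S])$ as the restriction of the Schur multiplier $S_A$ on $B(\H[S])$ whose symbol is $a(y_t, y_u) = \phi(s)$ whenever $u = st$ for some (equivalently, the unique relevant) $s \in S_0$, and arbitrary (say $0$) otherwise. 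Since every element of $\L[S]$ is the weak* limit of Ces\`aro sums of its Fourier series (Theorem \ref{T:uniequ} and the Remark after Theorem \ref{T:ccap}), $m_\phi^*$ is completely bounded iff $S_A$ restricted to $\L[S]$ is; and invoking Lemma \ref{L:mul}, any such $S_A$ is automatically weak*-continuous on $B(\H[S])$. The one subtlety is that a priori $S_A$ is only defined on $\L[S]$, not all of $B(\H[S])$; here I would use that $\L[S]$ has property $\bA_1(1)$ (established at the start of the section) so that completely bounded maps on $\L[S]_*$ dualize appropriately, together with Grothendieck's theorem to extend the symbol to a completely bounded (hence weak*-continuous by Lemma \ref{L:mul}) Schur multiplier of all of $B(\H[S])$ with the same norm. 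This extension step is where I expect the main technical obstacle to lie.

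For (ii) $\Rightarrow$ (iii): if $m_\phi^*$ is the restriction of a weak*-continuous, hence completely bounded, Schur multiplier $S_A$ of $B(\H[S])$, Grothendieck's theorem (as recalled before Lemma \ref{L:mul}) produces a Hilbert space $\K$ and functions $x, y \colon S_0 \to \K$ with $a(y_t, y_u) = (x(t), y(u))$. Restricting to the pairs $(y_t, y_{st})$ with $st \ne \dot 0$, where $a$ takes the value $\phi(s)$ by construction, yields $\phi(s) = (x(t), y(st))$; relabelling $f = x$ and $g = y$ gives exactly the factorization in (iii). I must check that the chosen values of $a$ on the "irrelevant" pairs do not interfere — but since the factorization is required only on the pairs with $st \ne \dot 0$, this is immediate.

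For (iii) $\Rightarrow$ (i): given such a factorization $\phi(s) = (f(t), g(st))$, I define the symbol $a(y_t, y_u) = (f(t), g(u))$ on $S_0 \times S_0$. By Grothendieck's theorem this is a completely bounded Schur multiplier of $B(\H[S])$ with cb-norm at most $\sup_t \|f(t)\| \cdot \sup_u \|g(u)\|$, and it is weak*-continuous by Lemma \ref{L:mul}. Its restriction to $\L[S]$ sends $L_s$ to $\phi(s) L_s$ (using that on the support of $L_s$ the symbol equals $\phi(s)$), so it is precisely $m_\phi^*$; dualizing via the $\bA_1(1)$ property and Lemma \ref{L:mul} gives that $m_\phi$ is completely bounded on $\L[S]_*$. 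Hence $\phi$ is a completely bounded Schur multiplier of $\L[S]_*$, closing the loop.
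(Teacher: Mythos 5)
Your overall architecture is reasonable, and the directions (ii) $\Rightarrow$ (iii) (restrict the Grothendieck factorization of the full symbol to the pairs $(t,st)$ with $st\ne\dot 0$) and (iii) $\Rightarrow$ (ii)/(i) (define the full symbol $a(t,u)=(f(t),g(u))$ on $S_0\times S_0$ and restrict to $\L[S]$) are essentially what the paper does. The genuine gap is exactly where you flagged it: in (i) $\Rightarrow$ (ii) you must pass from the complete boundedness of $m_\phi^*$ \emph{on the subspace} $\L[S]$ to a bounded Schur multiplier of all of $B(\H[S])$, and neither property $\bA_1(1)$ nor Grothendieck's theorem accomplishes this. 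Grothendieck's criterion characterizes symbols already known to induce bounded maps on the whole of $B(\ell^2(\Gamma))$; it gives no mechanism for producing such a symbol from a map that is only defined, and only known to be bounded, on $\L[S]$. Filling in the off-support entries with $0$ and asserting boundedness of the resulting multiplier is precisely the statement to be proved, so as written your cycle of implications is not closed.

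The paper closes this gap by a different mechanism, following Arias (and ultimately the de~Canni\`ere--Haagerup technique): it proves (i) $\Rightarrow$ (iii) directly. Since $\Phi=m_\phi^*$ is a completely bounded map on $\L[S]$, the representation theorem for completely bounded maps yields a $*$-representation $\pi$ of $B(\H[S])$ on a Hilbert space $K$ and bounded operators $V_1,V_2$ with $\Phi(A)=V_2^*\pi(A)V_1$. Computing matrix entries gives $\phi(s)=|[st]|\,(V_2^*\pi(L_s)V_1x_t,x_{st})_S$, and the key trick is to replace the contractions $\pi(L_{s_i})$ (for the generators $s_i$ of $S$) by unitary dilations $U_{s_i}$, so that the multiplicativity of $t\mapsto U_t$ lets one rewrite this expression as an inner product of a vector $f(t)=U_t^*\widetilde V_1x_t$ depending only on $t$ with a vector $g(st)=|[st]|\,U_{st}^*\widetilde V_2x_{st}$ depending only on $st$. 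That is exactly (iii); the Schur multiplier $M$ of $B(\H[S])$ with symbol $(f(t),g(u))$ then restricts to $\Phi$ on $\L[S]$, which gives (ii), and (ii) $\Rightarrow$ (i) follows from the duality remark you cite together with Lemma \ref{L:mul}. If you want to keep your outline, the ``extension of the symbol'' step must be replaced by this representation-theorem-plus-dilation argument (or an equivalent one); there is no softer route through $\bA_1(1)$.
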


\begin{proof} 
The proof follows along the same lines as the proof of \cite[Proposition 2.3]{Ari}. From the above analysis, it suffices to show that (i) implies (ii). Suppose that $\phi: S_0\to \bC$ is a completely bounded Schur multiplier of $\L[S]_*$. To simplify notation, set $\H=\H[S]$ and $\Phi=m_\phi^*$. Then as remarked above, $\Phi:\L[S]\to \L[S]$ is a completely bounded Schur multiplier. 

By the representation theorem for completely bounded maps (see for example \cite{Pisier, Pisier1}), there is a Hilbert space $K$, a $*$-representation
$\pi: B(\H)\to B(K)$, and two bounded operators $V_i: \H\to K$ ($i=1,2$) such that 
\begin{align*}
\Phi(A)=V_2^*\pi(A)V_1, \quad A\in \L[S]. 
\end{align*}
A simple calculation shows that 
\begin{align*}
\phi(s)=|[st]|(\Phi(L_s)x_t, x_{st})_S, \quad s, t\in S_0\text{ with } st\ne \dot{0}. 
\end{align*}
Substituting the first identity above into the second one gives 
\begin{align}
\label{E:phi3}
\phi(s)=|[st]|(V_2^*\pi(L_s)V_1x_t, x_{st})_S, \quad s, t\in S_0\text{ with } st\ne \dot{0}. 
\end{align} 

Let $\{s_i\in S_0: i=1,...,k\}$ be the set of all (nonzero) images of the $d$ generators of $\Fd$. Clearly this set generates $S$. 
It follows from the results in Section \ref{S:hhideal} that each operator $\pi(L_{s_i})$ is a contraction. Let $U_i\in B(K\oplus K \oplus K)$ be a unitary dilation of $\pi(L_{s_i})$, 
so that its $(2,2)$-entry is $\pi(L_{s_i})$. For $j=1,2$, define $\widetilde{V}_j: \H\to K\oplus K \oplus K$ by 
$\widetilde{V}_j(\xi)=(0, V_j\xi, 0)$ for all $\xi\in\H$.
Then it follows from \eqref{E:phi3} that for all $s, t\in S$ with $st\ne \dot{0}$ we have
\begin{align*}
\phi(s)
&=|[st]|(V_2^*\pi(L_s)V_1x_t, x_{st})_S\\
&=|[st]|(U_s\widetilde{V}_1x_t,\widetilde{V}_2 x_{st})\\
&=(U_t^*\widetilde{V}_1x_t,|[st]| U_{st}^*\widetilde{V}_2 x_{st})\\
&=(f(t), g(st)),
\end{align*}
where $f,g: S_0\to \K:=K\oplus K \oplus K$ are given by 
\[
f(s)=U_s^*\widetilde{V}_1x_s, \quad g(s)= |[s]| U_s^*\widetilde{V}_2 x_s, \quad s\in S_0. 
\]

Let $M:B(\H)\to B(\H)$ be defined by 
\[
(MTx_t, x_s)_S=(f(t),g(s))_S(Tx_t, x_s)_S, \quad T\in B(\H). 
\]
Then $M$ is a Schur multiplier of $B(\H)$, and it is easy to check that $\Phi$ is obtained by restricting $M$.
\end{proof}


\end{document}